\documentclass[11pt]{amsart}

\usepackage{amssymb}
\usepackage{geometry}
\usepackage[dvipsnames]{xcolor}
\usepackage{hyperref}
\usepackage[foot]{amsaddr}
\hypersetup{
    colorlinks=true,
    linkcolor=MidnightBlue,
    citecolor=MidnightBlue,
    urlcolor=MidnightBlue,
    pdftitle={Trembling-Hand Perfection in Mean Field Games},
    pdfauthor={Luciano Campi, Luca Di Persio, Lucrezia Zorzi}
}
\numberwithin{equation}{section}

\newtheorem{theorem}{Theorem}[section]
\newtheorem{lemma}{Lemma}[section]
\newtheorem{proposition}{Proposition}[section]
\newtheorem{remark}{Remark}[section]
\newtheorem{assumption}{Assumption}[section]
\newtheorem{definition}{Definition}[section]

\title[Trembling-Hand Perfection in Mean Field Games]
{Trembling-Hand Perfection in Mean Field Games}

\date{September 4, 2026}

\author[Campi]{Luciano Campi $^{\ast,1}$}
\email{\href{mailto:luciano.campi@unimi.it}{$^{1}$luciano.campi@unimi.it}}

\author[Di Persio]{Luca Di Persio $^{\dagger,2}$}
\email{\href{mailto:luca.dipersio@univr.it}{$^{2}$luca.dipersio@univr.it}}

\author[Zorzi]{Lucrezia Zorzi $^{\dagger,3}$}
\email{\href{mailto:lucrezia.zorzi@univr.it}{$^{3}$lucrezia.zorzi@univr.it}}

\address{$^{\ast}$Department of Mathematics ``Federigo Enriques'', University of Milan, Milan, Italy.}

\address{$^{\dagger}$Department of Computer Science, University of Verona, Verona, Italy.}

\begin{document}

\begin{abstract}
We introduce an admissible trembling-hand refinement for stochastic mean field games formulated through relaxed controlled martingale problems. A relaxed MFG equilibrium is perfect if it is the common Wasserstein limit of two sequences of joint control-state laws. The first consists of full-support population perturbations admissible for the state laws they generate; the second consists of exact optimal responses to those perturbed state laws. This admissibility condition prevents the control and state coordinates of a perturbation from being varied independently. Under the standing continuity, growth, and coercivity assumptions, we prove existence without
requiring compact controls or bounded coefficients. The proof combines a perturbed
fixed-point argument in the compact-bounded case with truncation, uniform
moment estimates, compactness, and stability of controlled martingale
problems. A one-dimensional model with exactly two relaxed mean field game
equilibria shows that the refinement is genuinely selective: only one
equilibrium is trembling-hand perfect.
\end{abstract}

\maketitle

{\noindent \small \textbf{Keywords:} mean field games; trembling-hand perfection; equilibrium selection; relaxed controls; controlled martingale problems.}

\smallskip

{\noindent \small \textbf{AMS 2020:} 91A16, 91A11, 93E20.}

\smallskip

\section{Introduction}

Mean field games (MFGs), introduced independently by Lasry and Lions \cite{lasry2007mean} and Huang, Malhamé and Caines \cite{huang2006large}, provide a framework for the analysis of strategic
interactions in symmetric stochastic differential games for a large population of players. In the
limiting formulation, a representative agent optimizes against a prescribed
flow of population distributions, and an equilibrium is obtained when the law
generated by an optimal response coincides with that flow. This fixed point
relation between individual optimality and collective behavior is the
defining feature of a mean field game; see
\cite{cardaliaguet2018course} and \cite{carmona2017probabilistic,
carmona2017probabilistic2} for general treatments.

In the absence of structural conditions ensuring uniqueness, mean field games
may admit multiple equilibria. Uniqueness holds under the classical Lasry-Lions monotonicity condition \cite{lasry2007mean} and, in certain
settings, for sufficiently small time horizons; see \cite{bardi2019non}.
Non-uniqueness is, however, a genuine feature of many mean field game models;
see, among others,
\cite{bardi2019non,bayraktar2020non,briani2018stable,cecchin2019convergence,delarue2019restoring,delarue2020selection,tchuendom2018uniqueness}.
This motivates the search for equilibrium refinements capable of selecting
among multiple solutions.

A natural refinement is Selten's notion of trembling-hand perfection, which requires an equilibrium to be robust to arbitrarily small mistakes modeled
through completely mixed perturbations 
\cite{selten1975reexamination, osborne1994course, van1991stability}.
A literal extension to mean field games of the finite-game characterization of Selten's notion would, however, be too restrictive if it
required the candidate equilibrium control itself to remain optimal against
every perturbed population flow. Indeed, in the present infinite-dimensional
setting, best responses may depend on the perturbation and converge to the
equilibrium without ever coinciding with it. Related
difficulties have led to several extensions of trembling-hand perfection
for games with infinite strategy spaces, including Simon's notion of local perfection in \cite{simon1987local} and the notions studied by Simon and Stinchcombe \cite{simon1995equilibrium}. More recently, Flesch et al.\ \cite{flesch2025general} proposed a two-sequence formulation in which both
the completely mixed perturbations and their exact optimal responses may
vary, provided that both sequences converge to the same equilibrium. This formulation is particularly suited to our setting, where optimal responses to perturbed population flows may vary with the perturbation and converge to the candidate equilibrium without ever coinciding with it. In finite normal-form games, the formulation of Flesch et al.\ coincides with Selten's original notion.

We adapt this two-sequence structure to the relaxed mean field game
framework developed by Lacker \cite{lacker2015mean}, in which controls and state
processes are represented by their joint laws. A relaxed MFG equilibrium
$\widehat P$ is called trembling-hand perfect (THP) if there are two sequences
$(P^j)_{j\geq1}$ and $(Q^j)_{j\geq1}$ converging to $\widehat P$ in the
$p$-Wasserstein topology. The law $P^j$ represents the perturbed population
behavior: its relaxed-control marginal has full support, and its control
and state coordinates jointly satisfy the controlled martingale problem in
the environment generated by its own state law. The law $Q^j$ is an exact
optimal response to that perturbed environment.

The admissibility requirement on $P^j$ is essential. Full support alone
could be imposed artificially by coupling an arbitrary full-support control
law with the state law of the unperturbed equilibrium. Such a construction
would leave the population environment unchanged and would make every
relaxed MFG equilibrium satisfy the remaining requirements of the
definition. The admissibility requirement excludes this possibility by linking the
perturbed control coordinate to the resulting state law through the
controlled dynamics. Thus the two principal requirements have distinct
roles: full support is the analogue of complete mixing, whereas
admissibility ensures that the perturbation represents feasible population
behavior.

A related trembling-hand approach is developed by Graber \cite{graber2025trembling} for a class of deterministic mean field games
associated with transport equations. His criterion perturbs the dynamics
through stochastic noise and then takes a vanishing-noise limit. Our construction
instead leaves the underlying dynamics unchanged and introduces trembles
directly at the level of relaxed population controls.

Our main result establishes the existence of a THP equilibrium under the
general assumptions of \cite{lacker2015mean}. We first prove existence in the
setting with compact controls and bounded coefficients using a perturbed fixed
point argument. We then extend the result to the general case by combining
coefficient truncation, compactness arguments, and uniform moment estimates.
Finally, we present a one-dimensional model with two relaxed mean field game
equilibria and show that exactly one is a THP equilibrium. The example also
shows that the selection mechanism depends on the interaction between the full
support condition and the geometry of the control set. 

The paper is organized as follows.
Section~\ref{section: relaxed framework} introduces the relaxed control and
controlled martingale problem framework.
Section~\ref{section: thp} defines trembling-hand perfection and states the
main existence results.
Sections~\ref{section: compact-bounded existence} and
\ref{section: general existence} establish existence, respectively, under
Assumption~\ref{ass_compact-bounded} and in the general setting.
Section~\ref{section: selection example} presents a selection example, and
Section~\ref{section: conclusion} concludes.
Technical results on
full-support perturbations, compactness, stability, and approximation of
admissible deviations are collected in the appendices.

\section{Relaxed controlled martingale problems}
\label{section: relaxed framework}

This section introduces the relaxed control and controlled martingale problem framework used throughout the paper. The representative agent problem is formulated in terms of joint laws of relaxed controls and state trajectories, separating admissibility from optimality. This distinction will be essential when perturbing population behavior while allowing the corresponding optimal response to vary.

\subsection{Notation and model data}
\label{subsection: notation data}

Fix a finite time horizon $T>0$, a state dimension $d\in\mathbb{N}$, and a noise dimension $m\in\mathbb{N}$. Throughout, $|\cdot|$ denotes the Euclidean norm on finite-dimensional Euclidean spaces; in particular, on matrix spaces it denotes the
Frobenius norm. For $k\in\mathbb{N}$, let $\mathcal{C}^{k}:=C([0,T];\mathbb{R}^{k}),$ endowed with the supremum norm
$$
\|x\|_{t}:=\sup_{0\leq s\leq t}|x_s|,
\qquad
t\in[0,T],\quad x\in\mathcal{C}^{k}.
$$
Let $(E,d_E)$ be a complete separable metric space. For $r\geq1$, denote by $\mathcal{P}^{r}(E)$ the set of Borel probability measures $\nu$ on $E$ such that $\int_E d^r_E(x,x_0)\,\nu(dx)<\infty$ for some $x_0\in E$. We equip $\mathcal{P}^{r}(E)$ with the $r$-Wasserstein distance
$$
W_{r,E}(\nu,\nu') := \left(\inf_{\pi\in\Pi(\nu,\nu')} 
\int_{E\times E}d^r_E(x,y)\,\pi(dx,dy)\right)^{1/r},
$$
where $\Pi(\nu,\nu')$ denotes the set of couplings of $\nu$ and $\nu'$. We write $W_r$ when the underlying space is clear.

For $\nu\in\mathcal{P}^{r}(\mathbb{R}^{k})$, set
$$
m_r(\nu) := \left(\int_{\mathbb{R}^{k}}|z|^r\,\nu(dz)\right)^{1/r}.
$$
For $\mu\in\mathcal{P}^{r}(\mathcal{C}^{k})$, define
$$
\|\mu\|_{t,r}^{r} := \int_{\mathcal{C}^{k}}\|x\|_{t}^{r}\,\mu(dx),
\qquad t\in[0,T].
$$
The time-$t$ marginal of $\mu$ is denoted by
$$
\mu_t:=\mu\circ e_t^{-1},
\qquad
e_t(x):=x_t.
$$
Fix exponents $p,p' \geq 1$ and $p_\sigma \geq 0$.
Let $A$ denote the control space. The mean field game is specified by
an initial distribution $\lambda \in \mathcal{P}^p(\mathbb{R}^d)$ and measurable functions
$$
\begin{aligned}
b&:[0,T]\times\mathbb{R}^{d}
\times\mathcal{P}^{p}(\mathbb{R}^{d})\times A
\rightarrow\mathbb{R}^{d},\\
\sigma&:[0,T]\times\mathbb{R}^{d}
\times\mathcal{P}^{p}(\mathbb{R}^{d})\times A
\rightarrow\mathbb{R}^{d\times m},\\
f&:[0,T]\times\mathbb{R}^{d}
\times\mathcal{P}^{p}(\mathbb{R}^{d})\times A
\rightarrow\mathbb{R},\\
g&:\mathbb{R}^{d}\times\mathcal{P}^{p}(\mathbb{R}^{d})
\rightarrow\mathbb{R}.
\end{aligned}
$$

The functions $b$ and $\sigma$ are the drift and diffusion coefficients, while $f$ and $g$ are the running and terminal rewards.

For intuition, fix a population law $\mu\in\mathcal{P}^{p}(\mathcal{C}^{d})$ and consider a strict control $\alpha$. On a suitable filtered probability space supporting an $m$-dimensional Brownian motion $W$, the representative state formally satisfies
$$
dX_t = b(t,X_t,\mu_t,\alpha_t)\,dt + \sigma(t,X_t,\mu_t,\alpha_t)\,dW_t, 
\qquad
X_0\sim\lambda,
$$
and the representative agent maximizes
$$
\mathbb{E}\left[\int_0^T f(t,X_t,\mu_t,\alpha_t)\,dt + g(X_T,\mu_T) \right].
$$
A mean field game equilibrium requires the prescribed population law to coincide with the law generated by an optimal control. The relaxed controlled martingale problem formulation below is the one used throughout the paper.

\subsection{Standing assumptions}
\label{subsection: standing assumptions}

We work under the following standard hypotheses for relaxed controlled martingale problems, adapted from \cite[Assumption~(A)]{lacker2015mean}.

\begin{assumption}[Standing assumptions]
\label{ass_standing}
The following conditions hold.
\begin{enumerate}
\item[\textnormal{(A1)}]
The functions $b$, $\sigma$, and $f$ are measurable in $t$ and continuous in $(x,\mu,a)\in
\mathbb{R}^{d}\times
\mathcal{P}^{p}(\mathbb{R}^{d})\times A$. The function $g$ is continuous in $(x,\mu)$.

\item[\textnormal{(A2)}]
There exists $c_1>0$ such that, for every $(t,\mu,a)\in
[0,T]\times\mathcal{P}^{p}(\mathbb{R}^{d})\times A$
and $x,y\in\mathbb{R}^{d}$,
$$
|b(t,x,\mu,a)-b(t,y,\mu,a)| + |\sigma(t,x,\mu,a)-\sigma(t,y,\mu,a)| \leq c_1|x-y|.
$$
Moreover,
$$
|b(t,x,\mu,a)| \leq c_1\left(1+|x|+m_p(\mu)+|a|\right),
$$
and
$$
|\sigma\sigma^\top(t,x,\mu,a)| \leq
c_1\left(1+|x|^{p_\sigma}+m^{p_\sigma}_p(\mu)+|a|^{p_\sigma}\right).
$$

\item[\textnormal{(A3)}]
There exist constants $c_2,c_3>0$ such that, for every $(t,x,\mu,a)\in
[0,T]\times\mathbb{R}^{d}
\times\mathcal{P}^{p}(\mathbb{R}^{d})\times A$,
$$
|g(x,\mu)| \leq
c_2\left(1+|x|^p+m^p_p(\mu)\right),
$$
$$
-c_2\left(1+|x|^p+m^p_p(\mu)+|a|^{p'}\right) \leq
f(t,x,\mu,a) \leq
c_2\left(1+|x|^p+m^p_p(\mu)\right) -c_3|a|^{p'}.
$$

\item[\textnormal{(A4)}]
The control space $A$ is a nonempty closed subset of a finite-dimensional Euclidean space.

\item[\textnormal{(A5)}]
The initial distribution satisfies $\lambda\in\mathcal{P}^{p'}(\mathbb{R}^{d})$, and
$$
p'>p\geq1\vee p_\sigma,
\qquad
p_\sigma\in[0,2].
$$
\end{enumerate}
\end{assumption}

The coercive upper bound in \textnormal{(A3)} yields $p'$-moment estimates for optimal controls. The condition $p'>p$ provides the uniform integrability needed for compactness in $\mathcal{P}^{p}$, while $p\geq p_\sigma$ and $p_\sigma\leq2$ control the diffusion terms.

\begin{assumption}[Compact controls and bounded coefficients]
\label{ass_compact-bounded}
The control space $A$ is compact, and the functions $b$ and $\sigma$ are bounded.
\end{assumption}

Assumption~\ref{ass_compact-bounded} is used only in the first stage of the existence proof. The general result is obtained under Assumption~\ref{ass_standing} by truncating the control space and the coefficients and then passing to the limit.

\subsection{Relaxed controls and the canonical space}
\label{subsection: relaxed controls}

\begin{definition}[Relaxed controls]
\label{definition: relaxed controls}
Let $\mathcal{V}[A]$ be the set of finite Borel measures $q$ on $[0,T]\times A$ such that
$$
q([s,t]\times A)=t-s,
\qquad
0\leq s<t\leq T,
$$
and
$$
\int_{[0,T]\times A}|a|^p\,q(dt,da)<\infty.
$$
An element $q\in\mathcal{V}[A]$ is called a relaxed control.
\end{definition}

Every $q\in\mathcal{V}[A]$ admits a disintegration
$q(dt,da)=dt\,q_t(da)$, where $t\mapsto q_t\in\mathcal{P}(A)$ is measurable and uniquely determined for Lebesgue-a.e.\ $t$. 
A strict control $t\mapsto\alpha_t\in A$ is identified with
$q(dt,da)=dt\,\delta_{\alpha_t}(da)$. We equip $[0,T]\times A$ with the product metric
$$
d_{[0,T]\times A}\big((t,a),(s,a')\big) := |t-s| + |a-a'|.
$$
We equip $\mathcal{V}[A]$ with the metric
$$
d_{\mathcal{V}[A]}(q,q') :=
W_{p,[0,T]\times A}\left(\frac{q}{T},\frac{q'}{T}\right).
$$
With this metric, $\mathcal{V}[A]$ is complete and separable; if $A$ is compact, then $\mathcal{V}[A]$ is compact.

The canonical space is
$\Omega[A]:=\mathcal{V}[A]\times\mathcal{C}^{d}$,
equipped with the product metric
$$
d_{\Omega[A]}\bigl((q,x),(q',x')\bigr) :=
\left(d_{\mathcal{V}[A]}(q,q')^p+\|x-x'\|_T^p\right)^{1/p}.
$$
Its canonical coordinates are
$$
\Lambda(q,x):=q,
\qquad
X(q,x):=x.
$$
We write $\Omega$ and $\mathcal{V}$ when the dependence on $A$ is unambiguous.
For $t\in[0,T]$, define
$$
\mathcal{F}^{\Lambda}_t :=
\sigma\left(\Lambda(C):C\in\mathcal{B}([0,t]\times A)\right), 
\qquad
\mathcal{F}^{X}_t:=\sigma(X_s:0\leq s\leq t),
$$
and $\mathcal{F}_t := \sigma\left(\mathcal{F}^{\Lambda}_t\cup\mathcal{F}^{X}_t\right)$.

By \cite[Lemma~3.2]{lacker2015mean}, we fix an
$(\mathcal{F}^{\Lambda}_t)$-predictable version of the
disintegration of the canonical relaxed control, denoted by
$(t,q)\mapsto\Lambda_t(q)$, such that $q(dt,da)=dt\,\Lambda_t(q)(da)$ for every $q\in\mathcal{V}[A]$.
For $r\geq1$, we use the notation
$$
|\Lambda_t|^r := \int_A |a|^r\,\Lambda_t(da).
$$
The coordinate projections are $1$-Lipschitz. Hence the maps
$P\mapsto P\circ\Lambda^{-1}$ and $P\mapsto P\circ X^{-1}$
are continuous from $\mathcal{P}^{p}(\Omega[A])$ into $\mathcal{P}^{p}(\mathcal{V}[A])$ and $\mathcal{P}^{p}(\mathcal{C}^{d})$, respectively.

\begin{lemma}[Continuity of time marginals]
\label{lemma: time marginal continuity}
For $\mu,\nu\in\mathcal{P}^{p}(\mathcal{C}^{d})$,
$$
\sup_{t\in[0,T]}W_p(\mu_t,\nu_t) \leq W_{p,\mathcal{C}^{d}}(\mu,\nu).
$$
Consequently, convergence in $\mathcal{P}^{p}(\mathcal{C}^{d})$ implies
uniform convergence of the time marginals in $W_p$.
\end{lemma}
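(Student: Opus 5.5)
The plan is a direct coupling argument. The evaluation map $e_t:\mathcal{C}^{d}\to\mathbb{R}^{d}$, $x\mapsto x_t$, is $1$-Lipschitz, because
$$
|e_t(x)-e_t(y)|=|x_t-y_t|\leq \|x-y\|_T .
$$
Any coupling of $\mu$ and $\nu$ can therefore be pushed forward to a coupling of $\mu_t$ and $\nu_t$ without increasing the transport cost.

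Concretely, fix $t\in[0,T]$ and an arbitrary coupling $\pi\in\Pi(\mu,\nu)$. Set $\pi_t:=\pi\circ(e_t\times e_t)^{-1}$. Its first marginal is $\mu\circ e_t^{-1}=\mu_t$ and its second is $\nu_t$, so $\pi_t\in\Pi(\mu_t,\nu_t)$. By the change of variables formula and the Lipschitz bound,
$$
W_p(\mu_t,\nu_t)^p\leq\int_{\mathbb{R}^d\times\mathbb{R}^d}|u-v|^p\,\pi_t(du,dv)
=\int_{\mathcal{C}^d\times\mathcal{C}^d}|x_t-y_t|^p\,\pi(dx,dy)
\leq\int_{\mathcal{C}^d\times\mathcal{C}^d}\|x-y\|_T^p\,\pi(dx,dy).
$$
Taking the infimum over $\pi\in\Pi(\mu,\nu)$ gives $W_p(\mu_t,\nu_t)\leq W_{p,\mathcal{C}^d}(\mu,\nu)$. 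The right-hand side does not depend on $t$, so taking the supremum over $t\in[0,T]$ yields the claimed inequality.

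It remains to check that the objects involved are well defined. First, $\mu_t\in\mathcal{P}^p(\mathbb{R}^d)$ because $|x_t|\leq\|x\|_T$, and $\mu$ has finite $p$-th moment on $\mathcal{C}^d$. Second, $e_t$ and $e_t\times e_t$ are continuous, hence Borel. The consequence follows at once: if $W_{p,\mathcal{C}^d}(\mu^n,\mu)\to0$, then $\sup_t W_p(\mu^n_t,\mu_t)\to0$.

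There is no genuine obstacle. The only point needing care is confirming that the pushforward coupling has the correct marginals, which is immediate from $(e_t\times e_t)$ acting coordinatewise.
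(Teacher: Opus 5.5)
Your proof is correct and is the same argument as the paper's. Both push a coupling forward under $(x,y)\mapsto(x_t,y_t)$, use $|x_t-y_t|\le\|x-y\|_T$, take the infimum over couplings and then the supremum over $t$; your added checks on measurability and the finite $p$-th moment of $\mu_t$ are harmless extras that the paper leaves implicit.
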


\begin{proof}
For every coupling $\pi$ of $\mu$ and $\nu$, the image of $\pi$ under
$(x,y)\mapsto(x_t,y_t)$ is a coupling of $\mu_t$ and $\nu_t$. Hence
$$
W_p^p(\mu_t,\nu_t) \leq 
\int_{\mathcal C^d\times\mathcal C^d}|x_t-y_t|^p\,\pi(dx,dy) \leq 
\int_{\mathcal C^d\times\mathcal C^d}\|x-y\|_T^p\,\pi(dx,dy).
$$
Taking the infimum over $\pi$ and then the supremum over $t$ proves the claim. 
\end{proof}

\subsection{Controlled martingale problems}
\label{subsection: martingale problem}

Fix an external population law $\mu\in\mathcal{P}^{p}(\mathcal{C}^{d})$. The representative agent treats the flow $(\mu_t)_{t\in[0,T]}$ as exogenous.

For $\varphi\in C_c^\infty(\mathbb{R}^{d})$, define
$$
\mathcal{L}^{\mu,a}_t\varphi(x) :=
b(t,x,\mu_t,a)^\top D\varphi(x) + 
\frac12 \operatorname{Tr}\left[\sigma\sigma^\top(t,x,\mu_t,a)D^2\varphi(x)\right].
$$
For $(q,x)\in\Omega[A]$, set
$$
M^{\mu,\varphi}_t(q,x) := \varphi(x_t) -
\int_0^t\int_A \mathcal{L}^{\mu,a}_s\varphi(x_s) q_s(da)\,ds.
$$

\begin{definition}[Admissible law]
\label{definition: admissible law}
For $\mu\in\mathcal{P}^{p}(\mathcal{C}^{d})$, let $\mathcal{R}(\mu)$ be the set of probability measures $P\in\mathcal{P}(\Omega[A])$ such that:
\begin{enumerate}
\item $P\circ X_0^{-1}=\lambda$;
\item $\mathbb{E}^{P} \left[ \int_0^T|\Lambda_t|^p\,dt \right] <\infty;$
\item for every $\varphi\in C_c^\infty(\mathbb{R}^{d})$, the process $(M^{\mu,\varphi}_t)_{t\in[0,T]}$ is a $P$-martingale with respect to $(\mathcal{F}_t)_{t\in[0,T]}$.
\end{enumerate}
\end{definition}

Thus $\mathcal{R}(\mu)$ is the set of admissible joint laws of controls and states when the external environment is $\mu$.

\begin{proposition}[Martingale measure representation]
\label{proposition: martingale measure representation}
For $\mu\in\mathcal{P}^{p}(\mathcal{C}^{d})$,
$\mathcal{R}(\mu)$ is precisely the set of laws $P'\circ(\Lambda,X)^{-1},$ where:
\begin{enumerate}
\item $(\Omega',\mathcal{F}',(\mathcal{F}'_t)_{t\in[0,T]},P')$ is a filtered probability space carrying a predictable
$\mathcal{P}(A)$-valued process $(\Lambda_t)_{t\in[0,T]}$, a continuous
$(\mathcal{F}'_t)$-adapted process $X$, and $m$ orthogonal
$(\mathcal{F}'_t)$-martingale measures $N=(N^1,\ldots,N^m)$ on $A\times[0,T]$, each with intensity $\Lambda_t(da)\,dt$;
\item $P'\circ X_0^{-1}=\lambda;$
\item $\mathbb{E}^{P'}\left[\int_0^T|\Lambda_t|^p\,dt\right]<\infty;$
\item the state equation holds:
$$
dX_t = \int_A b(t,X_t,\mu_t,a)\Lambda_t(da)\,dt + 
\int_A \sigma(t,X_t,\mu_t,a)N(da,dt).
$$
\end{enumerate}
Here the process $(\Lambda_t)_{t\in[0,T]}$ is identified with the random relaxed control $\Lambda(dt,da)=dt\,\Lambda_t(da).$
\end{proposition}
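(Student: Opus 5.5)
This is the standard equivalence between the martingale problem and weak solutions driven by martingale measures (El Karoui--Méléard; Lacker \cite[Lemma 2.4]{lacker2015mean}). I would prove the two inclusions separately.

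\emph{Inclusion of the laws into $\mathcal{R}(\mu)$.} Start from a tuple $(\Omega',\mathcal{F}',(\mathcal{F}'_t),P',\Lambda,X,N)$ as in items (1)--(4), and set $P:=P'\circ(\Lambda,X)^{-1}$.
\begin{enumerate}
\item Items (2) and (3) transfer directly to properties (1) and (2) of Definition~\ref{definition: admissible law}.
\item Apply It\^o's formula to $\varphi(X_t)$ for $\varphi\in C_c^\infty$. The stochastic integral against $N$ has quadratic variation $\int_0^t\int_A|\sigma^\top D\varphi|^2\Lambda_s(da)\,ds$. This is bounded, because $D\varphi$ has compact support and the growth bound (A2) makes $\sigma\sigma^\top$ bounded on compacts. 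Hence $M^{\mu,\varphi}_t(\Lambda,X)$ is a true $(\mathcal{F}'_t)$-martingale under $P'$.
\item The process $M^{\mu,\varphi}$ is adapted to the canonical filtration, and $\mathcal{F}^\Lambda_t$, $\mathcal{F}^X_t$ pulled back lie in $\mathcal{F}'_t$ because $\Lambda$ is predictable and $X$ is adapted. So the martingale property holds in the smaller filtration generated by $(\Lambda,X)$, and it transfers to $P$ on the canonical space.
\end{enumerate}

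\emph{Converse inclusion.} Given $P\in\mathcal{R}(\mu)$, work on the canonical space with the fixed predictable disintegration $\Lambda_t$.
\begin{enumerate}
\item Taking $\varphi$ equal to $x_i$ and $x_ix_j$ on large balls, and localizing with exit times $\tau_n$ of $X$ from balls, show the following. The process $X_t-X_0-\int_0^t\int_A b\,\Lambda_s(da)\,ds$ is a continuous local martingale $M$ with quadratic covariation $\langle M\rangle_t=\int_0^t\int_A\sigma\sigma^\top(s,X_s,\mu_s,a)\Lambda_s(da)\,ds$. Integrability of the drift comes from (A2) together with $\mathbb{E}^P\int|\Lambda_t|^p<\infty$ and $p\ge1$.
\item Apply the representation theorem of El Karoui and Méléard for continuous local martingales whose bracket has the form $\int\int \Sigma(s,a)\,\Lambda_s(da)\,ds$. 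On an extension of the canonical space, obtained by taking the product with an auxiliary space carrying independent white-noise martingale measures, there exist $m$ orthogonal martingale measures $N$ with intensity $\Lambda_t(da)\,dt$ such that $M_t=\int_0^t\int_A\sigma(s,X_s,\mu_s,a)N(da,ds)$.
\item Concretely, $N$ is built by projecting onto the range of $\sigma$ via a measurable pseudo-inverse (as in the classical Doob/Ikeda--Watanabe construction) and completing with the auxiliary noise on the kernel.
\item The extended space keeps the law of $(\Lambda,X)$ equal to $P$, so $P$ is of the required form.
\end{enumerate}

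\emph{Main obstacle.} The delicate step is the converse representation with martingale measures rather than Brownian motions. The construction requires measurability of the pseudo-inverse in $(s,x,a)$ and verification of orthogonality and the intensity structure, which means checking the covariation of $N^i(B\times\cdot)$ for Borel $B\subset A$. Rather than redo this, I would cite El Karoui--Méléard's theorem, as Lacker does, after checking its hypotheses. Those hypotheses are measurability and local boundedness of $\sigma\sigma^\top$ along paths, which (A1)--(A2) provide. The only extra care is that $\mu_t$ enters as a deterministic continuous parameter, by Lemma~\ref{lemma: time marginal continuity}, so the coefficients remain jointly measurable in $(t,x,a)$.
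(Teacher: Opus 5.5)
Your proposal is correct and follows the paper, which proves this statement only by citing El Karoui--M\'el\'eard \cite[Theorem~IV.2]{karoui1990martingale}; your two-inclusion sketch, including the extension-space representation for the converse, just unpacks that citation. One small fix is needed in the forward inclusion. When $A$ is unbounded, the bracket $\int_0^t\int_A|\sigma^\top D\varphi|^2\,\Lambda_s(da)\,ds$ is not bounded but only integrable, since (A2), $p_\sigma\le p$ and $\mathbb{E}^{P'}\bigl[\int_0^T|\Lambda_t|^p\,dt\bigr]<\infty$ give a finite expected bracket. That still makes $M^{\mu,\varphi}$ a true martingale.
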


\begin{proof}
This is \cite[Theorem~IV.2]{karoui1990martingale}. 
\end{proof}

\begin{lemma}[State estimate]
\label{lemma: state estimate}
Suppose Assumption~\ref{ass_standing} holds and fix
$\gamma\in[p,p']$. There exists a constant $c_4>0$, depending only on
$\gamma$, $T$, $c_1$, and the $p'$-moment of $\lambda$, such that, for
every $\mu\in\mathcal{P}^{p}(\mathcal{C}^{d})$ and
$P\in\mathcal{R}(\mu)$,
$$
\mathbb{E}^{P}\left[\|X\|_T^\gamma\right] \leq
c_4 \left(1+\|\mu\|_{T,\gamma}^{\gamma} +
\mathbb{E}^{P} \left[\int_0^T|\Lambda_t|^\gamma\,dt \right] \right),
$$
whenever the right-hand side is finite. In particular, $P\in\mathcal{P}^{p}(\Omega[A]).$
Moreover, if $P\circ X^{-1}=\mu$, then
$$
\|\mu\|_{T,\gamma}^{\gamma} =
\mathbb{E}^{P}\left[\|X\|_T^\gamma\right] \leq
c_4 \left(1+ \mathbb{E}^{P} \left[ \int_0^T|\Lambda_t|^\gamma\,dt \right] \right),
$$
whenever the control moment on the right-hand side is finite.
\end{lemma}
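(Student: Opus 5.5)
We will prove the estimate through the weak-solution representation of Proposition~\ref{proposition: martingale measure representation}. Fix $P\in\mathcal{R}(\mu)$ and realize it as $P'\circ(\Lambda,X)^{-1}$ on a filtered space carrying orthogonal martingale measures $N$ with intensity $\Lambda_t(da)\,dt$, so that
$$
X_t = X_0 + \int_0^t\int_A b(s,X_s,\mu_s,a)\Lambda_s(da)\,ds + \int_0^t\int_A \sigma(s,X_s,\mu_s,a)N(da,ds).
$$
Since the quantities involved are laws, it suffices to bound $\mathbb{E}^{P'}[\|X\|_T^\gamma]$. We first localize with $\tau_n:=\inf\{t:|X_t|\ge n\}\wedge T$, so that every term is finite. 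We then derive a Gronwall-type inequality for $\phi_n(t):=\mathbb{E}[\|X\|_{t\wedge\tau_n}^\gamma]$, and finally let $n\to\infty$ using Fatou's lemma.

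\textbf{Drift term.} By (A2) and Jensen's inequality for $\Lambda_s$,
$$
\left|\int_A b\,\Lambda_s(da)\right|\le c_1\bigl(1+|X_s|+m_p(\mu_s)+\textstyle\int_A|a|\Lambda_s(da)\bigr).
$$
Raising to the power $\gamma$ and applying H\"older in time gives a bound of order
$$
C\int_0^t\bigl(1+\|X\|_s^\gamma+m_p(\mu_s)^\gamma+|\Lambda_s|^\gamma\bigr)\,ds.
$$
Since $\gamma\ge p$, we have $m_p(\mu_s)^\gamma\le m_\gamma(\mu_s)^\gamma\le\|\mu\|_{T,\gamma}^\gamma$. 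Jensen also gives $(\int_A|a|\Lambda_s(da))^\gamma\le|\Lambda_s|^\gamma$.

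\textbf{Martingale term.} This is the main step. We apply the Burkholder--Davis--Gundy inequality to the stochastic integral, whose quadratic variation is $\int_0^t\int_A\sigma\sigma^\top(s,X_s,\mu_s,a)\Lambda_s(da)\,ds$. This yields
$$
\mathbb{E}\Bigl[\sup_{s\le t}|M_s|^\gamma\Bigr]\le C\,\mathbb{E}\Bigl[\Bigl(\int_0^t c_1\bigl(1+|X_s|^{p_\sigma}+m_p^{p_\sigma}(\mu_s)+|\Lambda_s|^{p_\sigma}\bigr)ds\Bigr)^{\gamma/2}\Bigr],
$$
where $|\Lambda_s|^{p_\sigma}$ is understood via $\int_A|a|^{p_\sigma}\Lambda_s(da)$. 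Because $p_\sigma\le 2$ and $p_\sigma\le p\le\gamma$, Young's inequality gives $|x|^{p_\sigma}\le 1+|x|^2$. When $\gamma\ge2$, H\"older in time bounds this by $C\int_0^t(1+\|X\|_s^\gamma+\|\mu\|_{T,\gamma}^\gamma+|\Lambda_s|^\gamma)\,ds$.

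When $\gamma<2$, the power $\gamma/2<1$ requires care. We bound
$$
\Bigl(\int_0^t|X_s|^{p_\sigma}ds\Bigr)^{\gamma/2}\le \|X\|_t^{\gamma/2\cdot p_\sigma/\gamma\cdot\gamma}\cdots
$$
using the standard trick $\bigl(\int_0^t\|X\|_s^{p_\sigma}ds\bigr)^{\gamma/2}\le\|X\|_t^{\gamma/2}\bigl(\int_0^t\|X\|_s^{p_\sigma-1}\cdots\bigr)$. More simply, $|x|^{p_\sigma}\le1+|x|^{\gamma}$ when $p_\sigma\le\gamma$, and then
$$
\Bigl(\int_0^t\|X\|_s^\gamma ds\Bigr)^{\gamma/2}\le \|X\|_t^{\gamma/2}\Bigl(\int_0^t\|X\|_s^{\gamma}ds\Bigr)^{1/2}\cdots
$$
adjusted so that Young's inequality produces $\tfrac12\phi_n(t)+C\int_0^t\phi_n(s)\,ds$. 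Concretely, we use $(\int_0^t\|X\|_s^{p_\sigma}ds)^{\gamma/2}\le \|X\|_t^{\gamma-\gamma p_\sigma/2\cdot 0}$ in the cleanest case $p_\sigma\le\gamma$, via $|X_s|^{p_\sigma}\le \|X\|_t^{p_\sigma-\gamma/2\cdot\epsilon}$. If this bookkeeping becomes awkward, we fall back on BDG in the form $\mathbb{E}\sup|M|^\gamma\le \varepsilon\,\mathbb{E}\|X\|_t^\gamma+C_\varepsilon(\ldots)$, absorbing the $\varepsilon$-term into the left side. The control and $\mu$ contributions are handled by Jensen, using $p_\sigma\gamma/2\le\gamma$ together with $T<\infty$.

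\textbf{Conclusion.} Collecting the terms, with $\mathbb{E}|X_0|^\gamma\le 1+m_{p'}(\lambda)^{p'}$ because $\gamma\le p'$, we obtain
$$
\phi_n(t)\le C\Bigl(1+\|\mu\|_{T,\gamma}^\gamma+\mathbb{E}\int_0^T|\Lambda_s|^\gamma ds\Bigr)+C\int_0^t\phi_n(s)\,ds.
$$
Gronwall's inequality and Fatou's lemma then give the claim, with $c_4$ depending only on $\gamma,T,c_1$ and the moment of $\lambda$.

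Taking $\gamma=p$ and using Definition~\ref{definition: admissible law}(2) together with $\mu\in\mathcal{P}^p$ gives $\mathbb{E}\|X\|_T^p<\infty$. Moreover, $d_{\mathcal{V}}(q,q_0)^p$ is bounded by $C(1+\int|a|^p\,dq)$ for a fixed strict control $q_0$, so $P\in\mathcal{P}^p(\Omega[A])$.

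For the last assertion, $P\circ X^{-1}=\mu$ makes the $\|\mu\|_{T,\gamma}^\gamma$ term equal to $\phi(T)$, which we must not simply move across the inequality because it could be infinite. Instead we substitute $m_p(\mu_s)^\gamma\le\mathbb{E}\|X\|_s^\gamma$ inside the time integrals, so that the $\mu$-terms appear as $C\int_0^t\phi(s)\,ds$. We redo the Gronwall argument with a localization on the $\mu$-moments (truncating $\mu$-terms using $\|\mu\|_{s,\gamma}$ with $s\le t$). The main obstacle is precisely this self-referential step: showing that $\|\mu\|_{T,\gamma}<\infty$ before applying Gronwall. We handle it by first applying the bound with $\gamma=p$, which is finite, and then iterating, or by using the time-integrated form directly, since the localization on $X$ controls $\phi$ while $\mu$ enters only through $\int_0^t$.
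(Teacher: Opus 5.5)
You prove the estimate from scratch with BDG, localization and Gronwall, which is a natural self-contained version of what the paper simply cites from \cite[Lemma~4.3]{lacker2015mean}. That argument does establish the first inequality and $P\in\mathcal{P}^p(\Omega[A])$. The gap is in the self-consistent assertion. You correctly flag it as the main obstacle, but you do not resolve it, and neither remedy works as written.

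\emph{The time-integrated form.} After $n\to\infty$ this gives only $\phi(t)\le K+C\int_0^t\phi(s)\,ds$, where $\phi(s)=\mathbb{E}^P[\|X\|_s^\gamma]$ may equal $+\infty$. Gronwall says nothing about such a $\phi$. The localized $\phi_n$ does not help either, because the environment term $m_p(\mu_s)^\gamma\le\mathbb{E}^P[\|X\|_s^\gamma]$ is not stopped at $\tau_n$.

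\emph{Applying the bound with $\gamma=p$ and iterating.} This is never specified. With your version of the first estimate, whose right-hand side contains $\|\mu\|_{T,\gamma}^\gamma$, knowing $\|\mu\|_{T,p}<\infty$ cannot be raised to order $\gamma$. Note also that $\|\mu\|_{T,p}<\infty$ is already given by $\mu\in\mathcal{P}^p(\mathcal{C}^d)$, so this step adds nothing.

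The missing idea is that (A2) involves the environment only through $m_p(\mu_s)\le\|\mu\|_{T,p}<\infty$.

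\emph{A priori finiteness.} Do not upgrade $m_p(\mu_s)^\gamma$ to $\|\mu\|_{T,\gamma}^\gamma$ in the drift and diffusion estimates. Your localized Gronwall argument then gives $\mathbb{E}^P[\|X\|_T^\gamma]\le C\bigl(1+\|\mu\|_{T,p}^\gamma+\mathbb{E}^P[\int_0^T|\Lambda_t|^\gamma dt]\bigr)<\infty$ whenever the control moment is finite.

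\emph{Closing the self-consistent bound.} When $P\circ X^{-1}=\mu$, $\phi$ is therefore finite on $[0,T]$. Every power $m_p(\mu_s)^r$ with $r\le\gamma$ appearing in the drift or diffusion terms satisfies $m_p(\mu_s)^r\le 1+m_\gamma(\mu_s)^\gamma\le 1+\phi(s)$. So all environment contributions become $C\int_0^t(1+\phi(s))\,ds$, Gronwall applies without localization, and the resulting constant is independent of $\mu$.

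Separately, your $\gamma<2$ paragraph is garbled. For instance, the displayed bound $\bigl(\int_0^t\|X\|_s^\gamma ds\bigr)^{\gamma/2}\le\|X\|_t^{\gamma/2}\bigl(\int_0^t\|X\|_s^\gamma ds\bigr)^{1/2}$ is not homogeneous in $X$, so it cannot hold in general. No trick is needed there: since $p_\sigma\le p\le\gamma<2$, the inequalities $|x|^{p_\sigma}\le 1+|x|^\gamma$, $\int_A|a|^{p_\sigma}\Lambda_s(da)\le 1+|\Lambda_s|^\gamma$ and $y^{\gamma/2}\le 1+y$ put the diffusion term directly in Gronwall form.
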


\begin{proof}
This is \cite[Lemma~4.3]{lacker2015mean}. 
\end{proof}

\subsection{Reward functional, optimal laws, and MFG equilibria}
\label{subsection: optimal laws}

For $\mu\in\mathcal{P}^{p}(\mathcal{C}^{d})$, define
$$
\Gamma^\mu(q,x) := \int_0^T\int_A f(t,x_t,\mu_t,a)q_t(da)\,dt +
g(x_T,\mu_T).
$$
For $P\in\mathcal{P}^{p}(\Omega[A])$, set
$$
J(\mu,P) := \int_{\Omega[A]}\Gamma^\mu(q,x)\,P(dq,dx).
$$
The upper bound in Assumption~\ref{ass_standing}\textnormal{(A3)} ensures that the positive part of $\Gamma^\mu$ is $P$-integrable. Hence $J(\mu,P)\in\mathbb{R}\cup\{-\infty\}$ is well defined.

\begin{lemma}[Higher moments of finite-value admissible laws]
\label{lemma: finite value higher moments}
Let $\mu\in\mathcal P^p(\mathcal C^d)$ and
$S\in\mathcal R(\mu)$. If $J(\mu,S)>-\infty,$ then
$$
\mathbb E^S \left[\int_0^T|\Lambda_t|^{p'}\,dt \right] <\infty.
$$
If, in addition, $\|\mu\|_{T,p'}<\infty$, then $\mathbb E^S\bigl[\|X\|_T^{p'}\bigr]<\infty.$
\end{lemma}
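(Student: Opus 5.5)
The plan is to read off the $p'$-moment of the control from the coercive upper bound in (A3), and then feed it into the state estimate of Lemma~\ref{lemma: state estimate} with $\gamma=p'$.

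\emph{Upper bound on $\Gamma^\mu$.} Integrating the upper bound in (A3) against $q_t(da)\,dt$ and adding the bound on $g$ gives, for every $(q,x)\in\Omega[A]$,
$$
\Gamma^\mu(q,x) \leq C\Bigl(1+\|x\|_T^p+\sup_{t\in[0,T]}m_p^p(\mu_t)\Bigr) - c_3\int_0^T\int_A|a|^{p'}q_t(da)\,dt,
$$
with $C$ depending only on $c_2$ and $T$. Here $\sup_t m_p^p(\mu_t)\le\|\mu\|_{T,p}^p<\infty$, since $\mu\in\mathcal P^p(\mathcal C^d)$.

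\emph{Integrability of the positive terms.} We have $S\in\mathcal R(\mu)$, so item (2) of Definition~\ref{definition: admissible law} gives $\mathbb E^S[\int_0^T|\Lambda_t|^p\,dt]<\infty$. Lemma~\ref{lemma: state estimate} with $\gamma=p$ then gives $\mathbb E^S[\|X\|_T^p]<\infty$. Hence the positive terms on the right-hand side are $S$-integrable.

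\emph{Moment of the control.} Rearranging the display and taking expectations under $S$ gives
$$
c_3\,\mathbb E^S\Bigl[\int_0^T|\Lambda_t|^{p'}\,dt\Bigr] \leq C\bigl(1+\mathbb E^S\|X\|_T^p+\|\mu\|_{T,p}^p\bigr) - J(\mu,S).
$$
The right-hand side is finite because $J(\mu,S)>-\infty$, which proves the first claim.

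The one point needing care is the manipulation with possibly infinite quantities. The control term is nonnegative, and the remaining terms are integrable, so the expectation of $\Gamma^\mu$ splits legitimately into an integrable part minus $c_3$ times a nonnegative part. Equivalently, if the control moment were infinite, then $J(\mu,S)=-\infty$.

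\emph{State moment.} Assume in addition $\|\mu\|_{T,p'}<\infty$. Apply Lemma~\ref{lemma: state estimate} with $\gamma=p'\in[p,p']$. Its right-hand side $c_4\bigl(1+\|\mu\|_{T,p'}^{p'}+\mathbb E^S[\int_0^T|\Lambda_t|^{p'}dt]\bigr)$ is finite by the first part. This gives $\mathbb E^S[\|X\|_T^{p'}]<\infty$.

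No step is a real obstacle. The only subtlety is the bookkeeping with $-\infty$ handled above, and the fact that the first part must be established before the lemma can be invoked at $\gamma=p'$.
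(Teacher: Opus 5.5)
Your proposal is correct and follows essentially the same route as the paper. You combine the coercive upper bound on $f$ with the growth bound on $g$, use the state estimate at $\gamma=p$ to make the positive terms integrable, and then apply the state estimate at $\gamma=p'$ once the control $p'$-moment is known; your explicit handling of the possibly infinite nonnegative control term makes rigorous a step the paper leaves implicit.
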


\begin{proof}
Since $S\in\mathcal R(\mu)$, we have $\mathbb E^S[\int_0^T|\Lambda_t|^p\,dt] <\infty.$
Moreover, $\mu\in\mathcal P^p(\mathcal C^d)$, and therefore the state
estimate of Lemma~\ref{lemma: state estimate}, applied with $\gamma=p$,
gives $\mathbb E^S\bigl[\|X\|_T^p\bigr]<\infty$.
Using the coercive upper bound for $f$ and the growth bound for $g$, we
obtain
$$
J(\mu,S) \leq
C\left(1+\|\mu\|_{T,p}^p +\mathbb E^S\bigl[\|X\|_T^p\bigr] \right)
- c_3\mathbb E^S \left[\int_0^T|\Lambda_t|^{p'}\,dt \right].
$$
The first term on the right-hand side is finite. Since
$J(\mu,S)>-\infty$, it follows that
$$
\mathbb E^S\left[\int_0^T|\Lambda_t|^{p'}\,dt\right]<\infty.
$$
If, in addition, $\|\mu\|_{T,p'}<\infty$, the state estimate of
Lemma~\ref{lemma: state estimate}, now applied with $\gamma=p'$, yields the second assertion. 
\end{proof}

\begin{proposition}[Upper semicontinuity of the reward]
\label{proposition: J upper semicontinuity}
Under Assumption~\ref{ass_standing}, the map
$$
J: \mathcal{P}^{p}(\mathcal{C}^{d}) \times \mathcal{P}^{p}(\Omega[A])
\rightarrow \mathbb{R}\cup\{-\infty\}
$$
is upper semicontinuous.
\end{proposition}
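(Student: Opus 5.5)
Fix a sequence $(\mu^n,P^n)\to(\mu,P)$ in $\mathcal{P}^{p}(\mathcal{C}^{d})\times\mathcal{P}^{p}(\Omega[A])$. The goal is
$$
\limsup_{n}J(\mu^n,P^n)\le J(\mu,P).
$$
The plan is to split $\Gamma^\mu$ into a part with $p$-growth, which converges by Wasserstein convergence, and a nonpositive coercive remainder, which is handled by lower semicontinuity of a nonnegative functional. Using (A3), set
$$
h(t,x,\nu,a):=c_2\left(1+|x|^p+m_p^p(\nu)\right)-f(t,x,\nu,a)\ge c_3|a|^{p'}\ge0 .
$$
Then
$$
\Gamma^\mu(q,x)=\int_0^T\!\!\int_A c_2\bigl(1+|x_t|^p+m_p^p(\mu_t)\bigr)q_t(da)\,dt+g(x_T,\mu_T)-\int_0^T\!\!\int_A h(t,x_t,\mu_t,a)\,q_t(da)\,dt .
$$
The first integral equals $c_2\int_0^T(1+|x_t|^p+m_p^p(\mu_t))\,dt$, since each $q_t$ is a probability measure.

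\textbf{Step 1: the growth terms converge.} The map $(q,x)\mapsto\int_0^T|x_t|^p\,dt$ is continuous on $\Omega[A]$ and bounded by $T\|x\|_T^p$. Convergence in $W_p$ is equivalent to weak convergence together with convergence of $p$-th moments, and $\|x\|_T^p$ has $p$-growth in the metric of $\Omega[A]$. Hence
$$
\mathbb{E}^{P^n}\!\left[\int_0^T|X_t|^p\,dt\right]\to\mathbb{E}^{P}\!\left[\int_0^T|X_t|^p\,dt\right].
$$
By Lemma~\ref{lemma: time marginal continuity}, $m_p(\mu^n_t)\to m_p(\mu_t)$ uniformly in $t$, so the deterministic term converges.

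For $g$, consider $(x,n)\mapsto g(x_T,\mu^n_T)$. By (A1) it converges to $g(x_T,\mu_T)$ locally uniformly along converging sequences $x^n\to x$. It is dominated by $c_2(1+\|x\|_T^p+\sup_k m_p^p(\mu^k_T))$, which is uniformly integrable under $(P^n)$. Skorokhod representation, or the standard lemma on weak convergence with varying continuous integrands, then gives $\mathbb{E}^{P^n}[g(X_T,\mu^n_T)]\to\mathbb{E}^P[g(X_T,\mu_T)]$.

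\textbf{Step 2: lower semicontinuity of the $h$-term.} We must show
$$
\liminf_n \mathbb{E}^{P^n}\!\left[\int h(t,X_t,\mu^n_t,a)\,\Lambda(dt,da)\right]\ge \mathbb{E}^{P}\!\left[\int h(t,X_t,\mu_t,a)\,\Lambda(dt,da)\right].
$$
Since $h\ge0$, truncate: replace $h$ by $h\wedge K$. Monotone convergence in $K$ on the right reduces the claim to $h\wedge K$, a bounded function, measurable in $t$ and continuous in $(x,\nu,a)$.

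Next, use that $q^n\to q$ in $\mathcal{V}[A]$ implies stable convergence: $\int\phi\,dq^n\to\int\phi\,dq$ for bounded $\phi$ that are measurable in $t$ and continuous in $a$ (Jacod--Mémin). Combined with uniform convergence $x^n\to x$ and $\mu^n_t\to\mu_t$ in $W_p$ uniformly in $t$, this gives
$$
\int (h\wedge K)(t,x^n_t,\mu^n_t,a)\,q^n(dt,da)\to\int (h\wedge K)(t,x_t,\mu_t,a)\,q(dt,da)
$$
whenever $(q^n,x^n)\to(q,x)$. To pass the varying integrand through the limit, restrict to compact sets of $(x,\nu)$ where continuity is uniform modulo a Scorza--Dragoni argument in $t$. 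Bounded convergence together with Skorokhod representation of $P^n\to P$ then yields convergence of expectations of the truncated term, and the liminf inequality follows.

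\textbf{Conclusion and main obstacle.} Combining the steps, $\limsup_n J(\mu^n,P^n)\le J(\mu,P)$, with the value $-\infty$ allowed on the right. The main obstacle is Step 2: the integrand depends only measurably on $t$ while both $q^n$ and the arguments $(x^n,\mu^n)$ vary. I would handle it as in \cite[Lemma~4.1/Corollary A.5]{lacker2015mean}. First apply Scorza--Dragoni to get a compact set of times of nearly full measure on which $h\wedge K$ is jointly continuous. Then use that the time marginal of every $q\in\mathcal{V}[A]$ is Lebesgue measure; this makes the error from the excluded times uniformly small, at most $K$ times their measure.
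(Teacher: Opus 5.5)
Your argument is correct. The paper gives no independent proof here and simply cites \cite[Lemma~4.5]{lacker2015mean}; your route is a sound self-contained reconstruction of that standard result, and you point to the same source for the delicate step. You split the reward into a $p$-growth part, which converges under $W_p$, and a nonnegative coercive remainder, which is lower semicontinuous by truncation, Scorza--Dragoni and stable convergence of relaxed controls.

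One detail should be stated explicitly: $A$ need not be compact, so the uniform-continuity step in Step~2 must also restrict $a$ to a compact set, using tightness of $(q^n)$. More simply, $(t,a)\mapsto(t,x^n_t,\mu^n_t,a)$ converges uniformly to $(t,a)\mapsto(t,x_t,\mu_t,a)$, so the push-forwards of $q^n$ converge weakly with Lebesgue time marginal, and the Jacod--M\'emin result then applies directly to $h\wedge K$.
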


\begin{proof}
This is \cite[Lemma~4.5]{lacker2015mean}. 
\end{proof}

\begin{definition}[Optimal admissible laws]
\label{def:optimal admissible laws}
For $\mu\in\mathcal P^p(\mathcal C^d)$, define
$$
\mathcal R^*(\mu) :=
\operatorname*{arg\,max}_{P\in\mathcal R(\mu)} J(\mu,P).
$$
\end{definition}

At this stage, $\mathcal{R}^{*}(\mu)$ may be empty. Its nonemptiness and the continuity properties needed for the fixed point argument will be established under the hypotheses of the corresponding existence result.

For $P\in\mathcal{P}^{p}(\Omega[A])$, define
$\mu^{P}:=P\circ X^{-1} \in\mathcal{P}^{p}(\mathcal{C}^{d})$ and
$\mu^{P}_t:=P\circ X_t^{-1}$.

\begin{definition}[Relaxed MFG equilibrium]
\label{definition: relaxed mfg equilibrium}
A law $\widehat{P}\in\mathcal{P}^{p}(\Omega[A])$ is a relaxed mean field game equilibrium if
$\widehat{P} \in \mathcal{R}^{*}(\mu^{\widehat{P}})$.
\end{definition}

Thus a relaxed MFG equilibrium is admissible in the environment generated by its own state law and maximizes the representative agent's reward in that environment.

\section{Trembling-hand perfection}
\label{section: thp}

This section introduces the equilibrium refinement studied in the paper and states the main existence results. The definition is formulated on the canonical space introduced in Section~\ref{section: relaxed framework}. Its central feature is that the perturbed population behavior is represented by an admissible joint law of controls and states, while the representative agent is allowed to choose a possibly different exact optimal response to the population flow generated by that law.

\subsection{Perturbations with full support}
\label{subsection: full support perturbations}

We first specify the meaning of full support in the relaxed control setting. Since $\mathcal{V}[A]$ is a Polish space, the support of a probability measure $\eta\in\mathcal{P}(\mathcal{V}[A])$ is the closed set
$$
\operatorname{supp}(\eta) := \left\{q\in\mathcal{V}[A]:\eta(O)>0
\text{ for every open neighborhood }O\text{ of }q \right\}.
$$

\begin{definition}[Probability measures with full support]
\label{definition: full-support law}
A probability measure $\eta\in\mathcal{P}(\mathcal{V}[A])$ has full support if
$\operatorname{supp}(\eta)=\mathcal{V}[A]$.
Equivalently, $\eta(O)>0$ for every nonempty open set $O\subset\mathcal{V}[A]$.
\end{definition}

The condition is imposed on a probability law over the space of entire relaxed controls. It should not be confused with a pointwise requirement of the form $\operatorname{supp}(q_t)=A$ for Lebesgue-a.e. $t$.
No such pointwise requirement is used below.

Given $\eta \in \mathcal{P}(\mathcal{V}[A])$, define its barycentric relaxed control
$\bar{\eta} \in \mathcal{V}[A]$ by
$$
\bar{\eta}(B) := \int_{\mathcal{V}[A]} q(B)\,\eta(dq),
\qquad
B \in \mathcal{B}([0,T]\times A).
$$
Then $\bar{\eta}(dt,da)=dt\,\bar{\eta}_t(da)$ for a measurable kernel $(\bar{\eta}_t)_{t\le T}$.

\begin{proposition}[Full support of the barycentric control]
\label{proposition: barycentric full support}
If $\operatorname{supp}(\eta)=\mathcal V[A]$, then
$$
\operatorname{supp}(\bar \eta)=[0,T]\times A \quad\text{and}\quad
\operatorname{supp}(\bar \eta_t)=A\quad\text{for Lebesgue-a.e. }t.
$$
\end{proposition}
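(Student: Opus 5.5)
We prove the two assertions in turn. The pointwise statement is derived from the global one by contradiction, using a Lebesgue differentiation argument.

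\emph{Step 1: $\operatorname{supp}(\bar\eta)=[0,T]\times A$.} Fix $(t_0,a_0)\in[0,T]\times A$ and $\varepsilon>0$, and let $U$ be the relatively open set $\{(t,a):|t-t_0|+|a-a_0|<\varepsilon\}$ in $[0,T]\times A$. We must show $\bar\eta(U)=\int q(U)\,\eta(dq)>0$. By full support of $\eta$, it suffices to find a nonempty open set $O\subset\mathcal V[A]$ on which $q(U)>0$.

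The natural candidate is built from the strict control $q^0(dt,da)=dt\,\delta_{a_0}(da)$. Then $q^0(U)\ge c>0$, since the time set $\{t\in[0,T]:|t-t_0|<\varepsilon\}$ has positive Lebesgue measure. The map $q\mapsto q(U)$ is lower semicontinuous for the weak topology, by the Portmanteau theorem, because $U$ is open. Convergence in $d_{\mathcal V[A]}$, which is a $W_p$ distance after normalisation by $T$, implies weak convergence. Hence $O:=\{q: q(U)>c/2\}$ is open in $\mathcal V[A]$ and contains $q^0$. Therefore
$$
\bar\eta(U)\ge \tfrac{c}{2}\,\eta(O)>0.
$$
This gives the first claim.

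\emph{Step 2: $\operatorname{supp}(\bar\eta_t)=A$ for a.e. $t$.} Fix a countable base $\{B_k\}$ of open balls for the topology of $A$, with rational centres and radii, intersected with $A$ and nonempty. This is possible since $A$ is a separable metric space. It suffices to show that for each $k$ the set
$$
N_k:=\{t:\bar\eta_t(B_k)=0\}
$$
is Lebesgue-null, since the union of the $N_k$ is then null. Suppose instead that $|N_k|>0$ for some $k$. By the Lebesgue density theorem, there is a density point $t_0\in(0,T)$ of $N_k$. A one-dimensional slice condition does not directly give an open set of zero $\bar\eta$-measure, so we work with controls concentrated on $N_k$ instead.

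The key identity is
$$
\bar\eta(N_k\times B_k)=\int_{N_k}\bar\eta_t(B_k)\,dt=0,
$$
which means $q(N_k\times B_k)=0$ for $\eta$-a.e. $q$. Choose $a_0\in B_k$ and a small closed ball $\bar B\subset B_k$ around $a_0$. We look for an open set $O\ni q^*$ of relaxed controls such that every $q\in O$ satisfies $q(N_k\times B_k)>0$. This would contradict $\eta(O)>0$.

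Here lies the main obstacle: $N_k$ is merely measurable, so $q\mapsto q(N_k\times B_k)$ is not lower semicontinuous. A small $W_p$-perturbation can shift mass in time off $N_k$.

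To handle this, we use the density point. Choose $\delta$ small enough that $|N_k\cap I|>\tfrac{3}{4}|I|$ for $I=(t_0-\delta,t_0+\delta)$. Take $q^*=dt\,\delta_{a_0}$. For any $q$ close to $q^*$, consider the time marginal of $q$ restricted to $I\times B_k$. Since every $q$ has Lebesgue time marginal, the time marginal of $q|_{I\times A}$ is exactly $dt|_I$. Closeness of $q$ to $q^*$ forces, by Portmanteau applied to the open set $I\times B_k$ and the closed set $I\times (A\setminus B_k)$, that
$$
q(I\times B_k)>\tfrac{3}{4}|I|.
$$
Hence
$$
q(I\times(A\setminus B_k))<\tfrac14|I|.
$$
Since $q(N_k\cap I\times A)=|N_k\cap I|>\tfrac34|I|$, we obtain
$$
q\big((N_k\cap I)\times B_k\big)\ge |N_k\cap I|-q(I\times(A\setminus B_k))>\tfrac12|I|>0.
$$
Making the Portmanteau step quantitative requires choosing $O$ as a small $d_{\mathcal V}$-ball, and exploiting the fixed Lebesgue time marginal to avoid boundary issues at $\partial I$. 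This yields an open $O$ with $\eta(O)>0$ on which $q(N_k\times B_k)>0$, the desired contradiction.

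Step 1 then also follows from Step 2 directly. We nevertheless keep Step 1 as the warm-up, since it isolates the semicontinuity argument.
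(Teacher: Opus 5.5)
Your proof is correct, and Step~1 is essentially the paper's argument verbatim. Step~2 takes a more localized route than the paper.

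After the same reduction to $q(N_k\times B_k)=0$ for $\eta$-a.e.\ $q$, you pick a Lebesgue density point of $N_k$. You then use lower semicontinuity of $q\mapsto q(I\times B_k)$ on a small open rectangle, together with $q(I\times A)=|I|$, to force $q\bigl((N_k\cap I)\times B_k\bigr)>0$ on an open set containing $dt\,\delta_{a_0}$.

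The paper instead uses one global continuous functional $G(q)=\int\chi(a)\,q(dt,da)$. Here $\chi$ is a continuous bump equal to $1$ at a point of the open set and supported inside it. The paper observes that $q(E\times O)=0$ forces $G(q)\le T-|E|$, whereas $G(dt\,\delta_{a_1})=T$.

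Both arguments rest on the same fact: every relaxed control has Lebesgue time marginal. This is why your worry about mass being shifted in time off $N_k$ does not materialize. It is also why the density point is superfluous. Your own inequality already works with $I=[0,T]$ and the open set $\{q: q([0,T]\times B_k)>T-|N_k|\}$. On that set $q([0,T]\times(A\setminus B_k))<|N_k|$, and hence $q(N_k\times B_k)>0$. This is exactly the paper's proof with the indicator of $B_k$ in place of $\chi$, with lower semicontinuity replacing continuity.

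Two small points in your write-up:
\begin{enumerate}
\item $I\times(A\setminus B_k)$ is not closed. You do not need it, because the open-set Portmanteau bound plus the fixed time marginal already gives the complement estimate.
\item The superlevel set $\{q:q(I\times B_k)>\tfrac34|I|\}$ is itself open by lower semicontinuity. So no quantitative choice of a $d_{\mathcal V[A]}$-ball is required.
\end{enumerate}

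The localization buys nothing essential here. The paper's global version is shorter and avoids the density theorem.
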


The proposition does not assert that an $\eta$-distributed relaxed control
has full support on $A$ realization-wise. It identifies the precise
population-level consequence of full support on $\mathcal V[A]$. Its proof is
given in Appendix~\ref{appendix: full-support perturbations}.

Probability measures with full support exist because $\mathcal{V}[A]$ is separable. In the compact case, if $(q^k)_{k\geq1}$ is a countable dense subset of $\mathcal{V}[A]$, then
$$
\eta:=\sum_{k=1}^{\infty}2^{-k}\delta_{q^k}
$$
has full support. For the general existence proof, the perturbation law must also satisfy the stronger integrability condition
\begin{equation}
\label{eq: perturbation law pprime moment}
\int_{\mathcal{V}[A]}
\left(
\int_0^T\int_A |a|^{p'}q_t(da)\,dt
\right)
\eta(dq)
<\infty.
\end{equation}
Proposition~\ref{proposition: global full-support law appendix}
constructs a probability measure
$\eta\in\mathcal{P}^{p}(\mathcal{V}[A])$
with full support and property~\eqref{eq: perturbation law pprime moment}.
Thus the compact case uses the elementary atomic law introduced above,
whereas the general existence proof uses the stronger construction from
Appendix~\ref{appendix: full-support perturbations}.

\subsection{The refinement}
\label{subsection: refinement definition}

We now define the equilibrium concept.
\begin{definition}[Trembling-hand perfect equilibrium]
\label{definition: thp}
Let $\widehat{P}\in\mathcal{P}^{p}(\Omega[A])$ be a relaxed MFG equilibrium. We call $\widehat{P}$ a trembling-hand perfect (THP) equilibrium if there exist two sequences
$(P^j)_{j\geq1}$ and $(Q^j)_{j\geq1}$ in $\mathcal{P}^{p}(\Omega[A])$ such that:
\begin{enumerate}
\item $P^j\rightarrow\widehat{P},
\,
Q^j\rightarrow\widehat{P}$
in $\mathcal{P}^{p}(\Omega[A])$;
\item for every $j\geq1$, the relaxed control marginal $P^j\circ\Lambda^{-1}$ has full support on $\mathcal{V}[A]$;
\item for every $j\geq1$,
$$
P^j\in\mathcal{R}(\mu^{P^j}),
\qquad
Q^j\in\mathcal{R}^{*}(\mu^{P^j}).
$$
\end{enumerate}
\end{definition}
The two sequences have distinct roles. The law $P^j$ describes a perturbation of the population behavior. The condition $P^j\in\mathcal{R}(\mu^{P^j})$ requires its control and state coordinates to satisfy the controlled martingale problem in the environment generated by its own state law. Thus the perturbation remains admissible and is not merely a device for producing an external population flow. The law $Q^j$, by contrast, is an exact optimal response to the perturbed environment $\mu^{P^j}$. Allowing $Q^j$ to vary with $j$ is essential in the present infinite-dimensional strategy space, where best responses need not remain fixed under perturbations.

The convergence in Definition~\ref{definition: thp} is convergence in the $p$-Wasserstein space $\mathcal{P}^{p}(\Omega[A])$, not merely weak convergence. By continuity of the canonical projections, this convergence also holds for the corresponding control and state marginals. Full support is required only for the perturbing relaxed control marginals; it need not be inherited by the limiting equilibrium.

\begin{remark}[Role of admissibility]
\label{remark: admissibility thp}
The admissibility requirement on $P^j$ prevents the definition from becoming vacuous. To see this, let $\widehat{P}$ be any relaxed MFG equilibrium, let $\eta$ be a probability measure with full support satisfying~\eqref{eq: perturbation law pprime moment}, and consider
$$
R:=\eta\otimes\mu^{\widehat{P}} \quad\text{on }\Omega[A].
$$
Then we have
$R\circ\Lambda^{-1}=\eta$ and $R\circ X^{-1}=\mu^{\widehat{P}}$,
but in general $R\notin\mathcal{R}(\mu^{\widehat{P}})$, because its control and state coordinates need not satisfy the controlled martingale problem jointly. If $\varepsilon_j\downarrow0$ and one were to set 
$$ 
P^j:=(1-\varepsilon_j)\widehat P+\varepsilon_jR, 
\qquad 
Q^j:=\widehat P, 
$$ then the standard mixture coupling gives 
$$ 
W_{p,\Omega[A]}^p(P^j,\widehat P) \le 
\varepsilon_j W_{p,\Omega[A]}^p(R,\widehat P) \rightarrow0, 
$$ 
while $Q^j=\widehat P$ for every $j$. Hence both sequences converge to $\widehat P$. Moreover, the relaxed control marginal of $P^j$ has full support, and
$$
\mu^{P^j}=\mu^{\widehat{P}},
\qquad
Q^j\in\mathcal{R}^{*}(\mu^{P^j}).
$$
Thus, without the condition $P^j\in\mathcal{R}(\mu^{P^j})$, every relaxed MFG equilibrium would satisfy the remaining requirements through an artificial coupling of independent control and state coordinates.

The full-support and admissibility requirements therefore serve logically distinct purposes. The condition $\operatorname{supp}(P^j\circ\Lambda^{-1})=\mathcal V[A]$ is a topological complete-mixing condition on population strategies, whereas $P^j\in\mathcal R(\mu^{P^j})$ links the state marginal to the control coordinate through the dynamics. The selection example in Section~\ref{section: selection example} uses both: full support gives positive probability to controls with positive aggregate drift, while admissibility converts this into a strict perturbation of the terminal population mean.
\end{remark}

\subsection{Main existence results}
\label{subsection: main results strategy}

We first state the result in the case covered by Assumption~\ref{ass_compact-bounded}.

\begin{theorem}[Existence under Assumption~\ref{ass_compact-bounded}]
\label{thm: compact-bounded thm}
Suppose that Assumptions~\ref{ass_standing} and \ref{ass_compact-bounded} hold. Then there exists at least one THP equilibrium.
\end{theorem}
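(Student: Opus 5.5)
We follow Selten's classical route: solve a perturbed fixed-point problem for each $\varepsilon$, then let $\varepsilon\downarrow0$. Fix the atomic full-support law $\eta=\sum_k2^{-k}\delta_{q^k}$ on $\mathcal V[A]$, which is compact by Assumption~\ref{ass_compact-bounded}. For $\mu\in\mathcal P^p(\mathcal C^d)$, let $S^\eta(\mu)\in\mathcal R(\mu)$ be the law obtained by feeding the random relaxed control $\Lambda\sim\eta$, independent of $X_0\sim\lambda$ and of the driving martingale measures, into the state equation of Proposition~\ref{proposition: martingale measure representation}. Since $\eta$ is atomic, this amounts to solving, for each $k$, the martingale problem with the deterministic relaxed control $q^k$, and then mixing with weights $2^{-k}$.

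Lipschitz continuity in $x$ (A2) gives pathwise uniqueness, so $S^\eta(\mu)$ is uniquely defined. Continuity of $b,\sigma$ in $\mu$, together with the stability of controlled martingale problems, gives continuity of $\mu\mapsto S^\eta(\mu)$. Its control marginal is $\eta$, so it has full support.

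For $\varepsilon\in(0,1)$ we define the set-valued map
$$
\Phi_\varepsilon(\mu):=\Bigl\{\bigl((1-\varepsilon)Q+\varepsilon S^\eta(\mu)\bigr)\circ X^{-1}:\ Q\in\mathcal R^*(\mu)\Bigr\}.
$$
Under Assumption~\ref{ass_compact-bounded}, boundedness of $b,\sigma$ and compactness of $A$ make the family $\bigcup_\mu\mathcal R(\mu)$ tight, with uniformly bounded $p'$-moments. We therefore restrict $\mu$ to a convex compact set $K\subset\mathcal P^p(\mathcal C^d)$ of laws satisfying a uniform modulus of continuity and a $p'$-moment bound. As in \cite{lacker2015mean}, $\mathcal R^*(\mu)$ is nonempty, convex and compact, and $\mu\mapsto\mathcal R^*(\mu)$ is upper hemicontinuous. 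This uses Proposition~\ref{proposition: J upper semicontinuity} together with continuity of $\mu\mapsto\mathcal R(\mu)$ (lower hemicontinuity via a Berge-type argument). Hence $\Phi_\varepsilon$ has nonempty, convex, compact values and a closed graph on $K$. Kakutani--Fan--Glicksberg then yields a fixed point $\mu^\varepsilon$, with some $Q^\varepsilon\in\mathcal R^*(\mu^\varepsilon)$ satisfying
$$
P^\varepsilon:=(1-\varepsilon)Q^\varepsilon+\varepsilon S^\eta(\mu^\varepsilon),\qquad \mu^{P^\varepsilon}=\mu^\varepsilon.
$$
We then check the perturbed-population conditions of Definition~\ref{definition: thp}:
\begin{itemize}
\item $\mathcal R(\mu)$ is convex, since the martingale property and the initial-law constraint are linear in $P$. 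Both $Q^\varepsilon$ and $S^\eta(\mu^\varepsilon)$ lie in $\mathcal R(\mu^\varepsilon)$, so $P^\varepsilon\in\mathcal R(\mu^{P^\varepsilon})$.
\item The control marginal of $P^\varepsilon$ dominates $\varepsilon\eta$, so it has full support.
\item $Q^\varepsilon\in\mathcal R^*(\mu^{P^\varepsilon})$.
\end{itemize}

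Take $\varepsilon_j\downarrow0$. All $P^{\varepsilon_j}$ and $Q^{\varepsilon_j}$ lie in a compact subset of $\mathcal P^p(\Omega[A])$. Compactness follows from tightness, since $\mathcal V[A]$ is compact and the state laws satisfy Kolmogorov bounds, combined with the uniform $p'>p$ moments from Lemma~\ref{lemma: state estimate}. Along a subsequence $Q^{\varepsilon_j}\to\widehat P$. Moreover $W^p_p(P^{\varepsilon_j},Q^{\varepsilon_j})\le\varepsilon_j\,W_p^p(S^\eta(\mu^{\varepsilon_j}),Q^{\varepsilon_j})\to0$ by the mixture coupling and uniform moment bounds, so $P^{\varepsilon_j}\to\widehat P$ as well. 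It follows that $\mu^{\varepsilon_j}\to\mu^{\widehat P}$.

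It remains to show that $\widehat P$ is a relaxed MFG equilibrium. Admissibility $\widehat P\in\mathcal R(\mu^{\widehat P})$ follows from closedness of the graph of $\mathcal R$. For optimality, fix $S\in\mathcal R(\mu^{\widehat P})$. Lower hemicontinuity gives $S_j\in\mathcal R(\mu^{\varepsilon_j})$ with $S_j\to S$ and $J(\mu^{\varepsilon_j},S_j)\to J(\mu^{\widehat P},S)$. Combining this with $J(\mu^{\varepsilon_j},Q^{\varepsilon_j})\ge J(\mu^{\varepsilon_j},S_j)$ and upper semicontinuity of $J$ gives
$$
J(\mu^{\widehat P},\widehat P)\ \ge\ J(\mu^{\widehat P},S).
$$
Hence $\widehat P\in\mathcal R^*(\mu^{\widehat P})$, and the sequences $(P^{\varepsilon_j})_{j\geq1}$, $(Q^{\varepsilon_j})_{j\geq1}$ witness that $\widehat P$ is THP.

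The main obstacle is the continuity of $\mu\mapsto\mathcal R(\mu)$, in particular its lower hemicontinuity. This is needed both for the closed graph in the fixed-point step and for the approximation of deviations in the limit step. We would first try the approach of \cite{lacker2015mean}: represent admissible laws via strict or relaxed controls on a fixed Brownian setup, and use continuity of the coefficients in $\mu$ plus Lipschitz stability of SDE solutions to transport a deviation from $\mu$ to $\mu'$ while keeping the control fixed. A secondary point is the continuity of $S^\eta$, which follows from the same stability argument.
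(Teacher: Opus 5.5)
Your proposal is correct and follows essentially the paper's proof. Both arguments use a Kakutani--Fan--Glicksberg fixed point for the $\varepsilon$-perturbed map on a compact convex set of state laws invariant under $\mathcal R$. Both get admissibility and full support of $P^\varepsilon=(1-\varepsilon)Q^\varepsilon+\varepsilon S^\eta(\mu^\varepsilon)$ from convexity of $\mathcal R(\mu^\varepsilon)$ and the lower bound $\varepsilon\eta$ on its control marginal. Both then pass to a vanishing-$\varepsilon$ limit using the mixture coupling and the closed graph of $\mathcal R^*$.

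The deviations are minor. You use the single-valued selection $S^\eta(\mu)$, which forces an extra SDE-stability check for continuity in $\mu$. The paper avoids this check by working with the correspondence $\mathcal R_\eta(\nu)$, whose upper hemicontinuity follows directly from the closed graph of $\mathcal R$. In addition, your step $J(\mu^{\varepsilon_j},S_j)\to J(\mu^{\widehat P},S)$, like Berge's theorem for $\mathcal R^*$, needs the full continuity of $J$ from Proposition~\ref{proposition: J continuity compact}, not merely the upper semicontinuity you cite.
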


The main theorem removes the compactness and boundedness restrictions.

\begin{theorem}[General existence]
\label{thm: general thp}
Suppose that Assumption~\ref{ass_standing} holds. Then there exists at least one THP equilibrium.
\end{theorem}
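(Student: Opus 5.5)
The plan is to reduce to Theorem~\ref{thm: compact-bounded thm} by truncation, and then pass to the limit using compactness together with stability of the controlled martingale problems. For $n\ge1$ set $A_n:=A\cap\{|a|\le n\}$, nonempty for large $n$. Truncate the coefficients by composing the state with a smooth radial retraction onto a large ball, and cut off $b$ and $\sigma$ in norm. We arrange that $b_n,\sigma_n$ are bounded and continuous, satisfy (A2) with constants independent of $n$, and agree with $b,\sigma$ on the region $|x|\le n$, $|a|\le n$. Keep $f,g$ restricted to $A_n$, so (A3) holds uniformly in $n$. Theorem~\ref{thm: compact-bounded thm} then yields, for each $n$, a THP equilibrium $\widehat P^n$ of the $n$-th game on $\Omega[A_n]\subset\Omega[A]$. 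It comes with sequences $P^{n,j},Q^{n,j}$ attached.

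The first difficulty is that full support on $\mathcal V[A_n]$ is not full support on $\mathcal V[A]$. I would repair this at the level of the perturbation itself. I would build the perturbation for the truncated game as a convex combination $(1-\varepsilon)\,\cdot+\varepsilon\,(\text{law driven by }\eta)$. Here $\eta$ is the full-support law on $\mathcal V[A]$ from Proposition~\ref{proposition: global full-support law appendix}, and it satisfies \eqref{eq: perturbation law pprime moment}. For a fixed $\mu$, the set $\mathcal R(\mu)$ is convex. By Proposition~\ref{proposition: martingale measure representation}, any control law with finite $p$-moment, in particular $\eta$, can be realized as the control marginal of some admissible law: take the state driven by $\Lambda\sim\eta$ independently of the noise. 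A mixture of admissible laws in $\mathcal R(\mu)$ is admissible for the same $\mu$. Mixing therefore preserves admissibility as long as the environment is held fixed.

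I would therefore not use Theorem~\ref{thm: compact-bounded thm} as a black box. I would rerun its perturbed fixed-point argument for the $\varepsilon$-perturbed best-response correspondence
$$\mu\mapsto\bigl\{\mu^{(1-\varepsilon)Q+\varepsilon R^\mu_n}:Q\in\mathcal R^*_n(\mu)\bigr\},$$
where $R^\mu_n\in\mathcal R_n(\mu)$ is the admissible law with control marginal $\eta$ in the environment $\mu$. The correspondence is convex-valued because $\mathcal R_n(\mu)$ and $\mathcal R^*_n(\mu)$ are convex, and $\mu\mapsto R^\mu_n$ is continuous by strong uniqueness under the Lipschitz bound (A2). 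Kakutani--Fan--Glicksberg on a compact convex set of laws then gives $\mu^{n,\varepsilon}$ and $Q^{n,\varepsilon}\in\mathcal R^*_n(\mu^{n,\varepsilon})$. Set $P^{n,\varepsilon}:=(1-\varepsilon)Q^{n,\varepsilon}+\varepsilon R^{\mu^{n,\varepsilon}}_n$. This law has full-support control marginal on $\mathcal V[A]$ and satisfies $P^{n,\varepsilon}\in\mathcal R_n(\mu^{P^{n,\varepsilon}})$.

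Next come the uniform estimates. Coercivity (A3) and comparison with a fixed control give $\sup_{n,\varepsilon}\mathbb E^{Q^{n,\varepsilon}}\int_0^T|\Lambda_t|^{p'}dt<\infty$; the estimate \eqref{eq: perturbation law pprime moment} handles the $R$-part. Lemma~\ref{lemma: state estimate} with $\gamma=p'$, applied in the fixed-point form $P\circ X^{-1}=\mu$, then gives uniform $p'$-moments of the states. Since $p'>p$, combining this with Aldous-type tightness of the paths yields relative compactness in $\mathcal P^p(\Omega[A])$; this is the compactness result from the appendix. Taking a diagonal sequence $n_j\to\infty$, $\varepsilon_j\to0$, we get $P^j\to\widehat P$ and $Q^j\to\widehat P$, because $W_p^p(P^j,Q^j)\le\varepsilon_j W_p^p(R,Q)\to0$ by uniform moments.

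It remains to check, for the original game, two things: that $P^j\in\mathcal R(\mu^{P^j})$ and $Q^j\in\mathcal R^*(\mu^{P^j})$, and that $\widehat P$ is an equilibrium. This is the main obstacle. The admissibility and optimality hold only for the truncated data $(b_n,\sigma_n,A_n)$, not the original data. I would therefore relax the requirement and aim for the following. At stage $j$, replace $(Q^j,P^j)$ by the untruncated problem's object via the stability of controlled martingale problems in the appendix. That is, take the exact optimal response $\widetilde Q^j\in\mathcal R^*(\mu^{\widetilde P^j})$, where $\widetilde P^j$ is the solution of the original dynamics with the same control law. Show $W_p(\widetilde P^j,P^j)\to0$ using uniform moments and the fact that $\{|x|>n\}\cup\{|a|>n\}$ carries vanishing mass. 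Then $\widetilde Q^j$ converges, along a subsequence, to some $\widetilde Q\in\mathcal R^*(\mu^{\widehat P})$ by upper semicontinuity of $J$ (Proposition~\ref{proposition: J upper semicontinuity}) and the appendix approximation of admissible deviations, which gives lower semicontinuity of the value.

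Two points are then left. The first is that $\widehat P\in\mathcal R^*(\mu^{\widehat P})$: admissibility passes to the limit by stability, and optimality follows by comparing $J(\mu^{\widehat P},\widehat P)\ge\limsup J_n(\mu^{n},Q^{n})\ge\lim J_n(\mu^n,S_n)=J(\mu^{\widehat P},S)$ for approximating deviations $S_n\to S$. The second, which I expect to be the genuinely delicate step, is choosing the optimal responses $\widetilde Q^j$ so that they converge to $\widehat P$ itself rather than to another optimizer. I would try to arrange this by building the fixed point directly for the untruncated $\varepsilon$-perturbed correspondence on a compact convex set of laws with bounded $p'$-moments, using the truncation only to prove closedness of the graph. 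Then $Q^j$ is an exact optimizer for the original game, and $Q^j\to\widehat P$ holds by construction.
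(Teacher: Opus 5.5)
There is a genuine gap, at the step you yourself flag as the main obstacle.

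\textbf{Why the diagonal limit fails.} A diagonal limit $n_j\to\infty$, $\varepsilon_j\to0$ gives laws $P^{n_j,\varepsilon_j}$ and $Q^{n_j,\varepsilon_j}$. These are admissible and optimal only for the truncated data $(b_{n_j},\sigma_{n_j},A_{n_j})$, so they do not satisfy condition (3) of Definition~\ref{definition: thp} for the original game. Your closing stability and deviation argument certifies only that the limit $\widehat P$ is an equilibrium.

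Neither proposed repair closes this.
\begin{itemize}
\item Re-solving the original dynamics with the same control law in the environment $\mu^{P^j}$ gives $\widetilde P^j\in\mathcal R(\mu^{P^j})$, not $\widetilde P^j\in\mathcal R(\mu^{\widetilde P^j})$. Self-consistency is therefore lost.
\item More seriously, an exact optimizer $\widetilde Q^j$ in the new environment has no reason to converge to $\widehat P$. The closed-graph argument only places its limit points somewhere in $\mathcal R^*(\mu^{\widehat P})$. Forcing them to lie near the truncated optimizers $Q^{n_j,\varepsilon_j}$ would require lower hemicontinuity of the optimal-response correspondence, which is not available.
\item Your fallback is a Kakutani--Fan--Glicksberg argument applied directly to the untruncated perturbed correspondence, on a set of laws with bounded $p'$-moments. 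It is not carried out, and it runs into the very obstruction that motivates truncation. Lemma~\ref{lemma: state estimate} bounds the $p'$-moment of the image state law by $c_4\bigl(1+\|\mu\|_{T,p'}^{p'}+\cdots\bigr)$, which is not contractive in $\mu$. So no such set is invariant a priori. Uniform bounds hold only at fixed points, via self-consistency, as in Proposition~\ref{prop: appendix uniform moment estimates}.
\end{itemize}

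\textbf{The missing idea: take the limits in order, not diagonally.} Fix $\varepsilon_j$ and let $n\to\infty$ first.
\begin{itemize}
\item Along a subsequence, $Q^{j,n_k}\to Q^j$ and $R^{j,n_k}\to R^j$.
\item Proposition~\ref{proposition: stability truncated app} gives $Q^j,R^j\in\mathcal R(\nu^j)$. By convexity, $P^j:=(1-\varepsilon_j)Q^j+\varepsilon_jR^j\in\mathcal R(\mu^{P^j})$.
\item Proposition~\ref{proposition: truncated deviation approximation app}, together with upper semicontinuity of $J$, gives $Q^j\in\mathcal R^*(\mu^{P^j})$.
\item $R^j\circ\Lambda^{-1}=\eta$ gives full support.
\end{itemize}
This produces exactly the untruncated perturbed fixed point your last paragraph asks for, obtained as a limit of truncated ones, with no appeal to lower hemicontinuity.

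Only then let $j\to\infty$. The uniform bounds survive by lower semicontinuity. The sequence $Q^j\in\overline{\mathcal K}$ converges along a subsequence to some $\widehat P$, which is then the target by construction. Finally, $W_p^p(P^j,Q^j)\le\varepsilon_jW_p^p(R^j,Q^j)\to0$, so $P^j\to\widehat P$ as well.

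\textbf{A smaller inconsistency.} $R^\mu_n\in\mathcal R_n(\mu)$ cannot have control marginal $\eta$ when the control space is $A_n$, because $\eta$ charges relaxed controls with values outside $A_n$. You can fix this in one of two ways:
\begin{itemize}
\item let the bounded truncated coefficients act on all of $A$ for the perturbing law; or
\item as the paper does, use $\eta_n=\eta\circ\Pi_n^{-1}$ and recover full support only in the limit, via $\eta_n\to\eta$.
\end{itemize}
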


Theorems~\ref{thm: compact-bounded thm} and~\ref{thm: general thp}
are proved in Sections~\ref{section: compact-bounded existence}
and~\ref{section: general existence}, respectively.
The general proof uses the perturbation law with an integrable
$p'$-moment and its truncated approximations constructed in
Appendix~\ref{appendix: full-support perturbations}.

\section{Existence with compact controls and bounded coefficients}
\label{section: compact-bounded existence}

In this section we prove Theorem~\ref{thm: compact-bounded thm}. Throughout the section, Assumptions~\ref{ass_standing} and \ref{ass_compact-bounded} are in force. We first collect the compactness and continuity properties inherited from the relaxed controlled martingale problem framework. We then introduce the constrained admissible law correspondence used to impose perturbations with full support. The proof is completed by a perturbed fixed point argument and a vanishing perturbation limit.

\subsection{Admissible and optimal law correspondences}
\label{subsection: compact correspondences}

We begin with the properties of the admissible law correspondence
$$
\mathcal{R}:\mathcal{P}^{p}(\mathcal{C}^{d}) \rightrightarrows
\mathcal{P}^{p}(\Omega[A]),
$$
given in Definition \ref{definition: admissible law}.

\begin{proposition}[Properties of the admissible law correspondence]
\label{proposition: R compact-bounded}
Under Assumptions \ref{ass_standing} and \ref{ass_compact-bounded}, the following statements hold.
\begin{enumerate}
\item For every $\mu\in\mathcal{P}^{p}(\mathcal{C}^{d})$, the set $\mathcal{R}(\mu)$ is nonempty, compact, and convex in $\mathcal{P}^{p}(\Omega[A])$.
\item The range
$$
\operatorname{Ran}(\mathcal{R}) := 
\left\{ P\in\mathcal{P}^{p}(\Omega[A]): P\in\mathcal{R}(\mu)
\text{ for some } \mu\in\mathcal{P}^{p}(\mathcal{C}^{d})\right\}
$$
is relatively compact in $\mathcal{P}^{p}(\Omega[A])$.
\item The correspondence $\mathcal{R}$ is continuous: it is both upper and lower hemicontinuous with respect to the $p$-Wasserstein topologies.
\end{enumerate}
\end{proposition}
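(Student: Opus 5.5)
The plan is to follow Lacker's argument \cite[Lemmas~5.1--5.3]{lacker2015mean}, which rests on three ingredients. The first is tightness from bounded coefficients and compact controls. The second is the closedness of the martingale characterization under joint limits of $(\mu,P)$. The third is the existence of strong (or weak) solutions for prescribed controls.

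\textbf{Relative compactness of the range (2).} Since $A$ is compact, $\mathcal V[A]$ is compact, so the control marginals of any family in $\operatorname{Ran}(\mathcal R)$ are automatically tight. For the state marginals, note that $b$ and $\sigma$ are bounded, so Proposition~\ref{proposition: martingale measure representation} together with the Burkholder--Davis--Gundy inequality gives
$$
\mathbb E^P\bigl[|X_t-X_s|^{4}\bigr]\le C|t-s|^{2}
\qquad\text{and}\qquad
\mathbb E^P\bigl[\|X\|_T^{p'}\bigr]\le C\bigl(1+m_{p'}(\lambda)^{p'}\bigr),
$$
uniformly in $\mu$ and in $P\in\mathcal R(\mu)$. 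The Kolmogorov criterion then yields tightness in $\mathcal C^d$. Because $p'>p$, the uniform $p'$-moment bound gives uniform integrability of $\|X\|_T^p$, and the control moments are bounded since $A$ is bounded. Together these give relative compactness in $\mathcal P^p(\Omega[A])$.

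\textbf{Nonemptiness, convexity, and compactness of $\mathcal R(\mu)$ (1).} For nonemptiness, take a constant control $a_0\in A$. The coefficients are Lipschitz in $x$ by (A2), so the SDE has a strong solution with $X_0\sim\lambda$, and its law lies in $\mathcal R(\mu)$. Convexity holds because each defining condition of Definition~\ref{definition: admissible law} is linear in $P$. Specifically, the initial law and the moment bound are linear, and the martingale property is equivalent to
$$
\mathbb E^P\bigl[(M^{\mu,\varphi}_t-M^{\mu,\varphi}_s)\,h\bigr]=0
$$
for all bounded continuous $\mathcal F_s$-measurable $h$. Compactness will then follow from (2) once closedness is established; closedness is the special case $\mu^n=\mu$ of the graph-closedness argument in the next step.

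\textbf{Continuity of $\mathcal R$ (3).} For upper hemicontinuity, by (2) it suffices to show that the graph is closed. Let $\mu^n\to\mu$ and $P^n\in\mathcal R(\mu^n)$ with $P^n\to P$. By Lemma~\ref{lemma: time marginal continuity}, $\mu^n_t\to\mu_t$ uniformly in $t$. Fix $\varphi\in C_c^\infty(\mathbb R^d)$. Then the integrand $(q,x)\mapsto (M^{\mu^n,\varphi}_t-M^{\mu^n,\varphi}_s)h$ is bounded and converges to $(M^{\mu,\varphi}_t-M^{\mu,\varphi}_s)h$ uniformly on compact subsets of $\Omega[A]$. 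This uses three facts: the coefficients are continuous in $(x,\mu,a)$, $A$ is compact, and the coefficients are bounded. Convergence of $q\mapsto\int\!\!\int\cdot\,q(dt,da)$ on $\mathcal V[A]$ requires care because of the merely measurable dependence on $t$. To handle this, I would invoke the standard fact that $q\mapsto\int_s^t\int_A \psi(r,a)\,q_r(da)\,dr$ is continuous when $\psi$ is measurable in $r$ and continuous in $a$ \cite{lacker2015mean}. Combining this with tightness of $(P^n)$ passes the zero-expectation identity to the limit. One further technical point is that $h$ should be chosen as continuous functions of $(X_{r},\int_0^{r}\!\!\int\psi\,dq)$ for $r\le s$.

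\textbf{Lower hemicontinuity (the main obstacle).} Given $\mu^n\to\mu$ and $P\in\mathcal R(\mu)$, we must construct $P^n\in\mathcal R(\mu^n)$ converging to $P$. First, I would approximate $P$ by laws induced by strict controls that are continuous functionals of the Brownian path and of $X_0$. Second, I would solve the SDE driven by $\mu^n$ with the same control on the same probability space. By Lipschitz stability in $x$ and continuity of the coefficients in $\mu$, the resulting solutions converge in $L^p$ to the solution driven by $\mu$. The delicate part is the density step, namely that laws generated by such feedback or open-loop strict controls are dense in $\mathcal R(\mu)$; this is where the chattering lemma and the martingale-measure representation of Proposition~\ref{proposition: martingale measure representation} enter. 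I would cite \cite[Lemma~5.2 and Theorem~IV.2]{lacker2015mean,karoui1990martingale} for this step rather than reprove it.
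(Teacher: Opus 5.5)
Your arguments for nonemptiness, convexity, relative compactness of the range, closedness of the graph, and hence upper hemicontinuity and compactness of each value are correct. In substance they coincide with the paper. The paper takes nonemptiness from a strict control and convexity from affinity of the constraints. It gets compactness of $\mathcal R(\mu)$ as a closed subset of the relatively compact range, and otherwise cites \cite[Lemma~4.4]{lacker2015mean}; you are essentially reproving that lemma.

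The genuine difference is lower hemicontinuity. You go through a density theorem for strict (even continuous open-loop) controls, then re-solve with $\mu^n$ and diagonalize. That works once density is granted, but it is more than is needed. The references you attach to it also do not provide it: Theorem~IV.2 of El Karoui--M\'el\'eard is the representation result (Proposition~\ref{proposition: martingale measure representation}), not a chattering lemma.

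The representation alone already gives a direct proof. Realize $P$ by $(\Lambda,X,N)$, keep $\Lambda$, $N$ and $X_0$ fixed, and solve the state equation with $\mu^n_t$ in place of $\mu_t$. Strong existence holds by (A2), and $\operatorname{Law}(\Lambda,X^n)\in\mathcal R(\mu^n)$. A BDG--Gronwall estimate with dominated convergence (bounded coefficients, continuity in the measure argument, and Lemma~\ref{lemma: time marginal continuity}) then yields $\mathbb E\bigl[\|X^n-X\|_T^p\bigr]\to0$. Since the control coordinate is unchanged, this gives $W_p$-convergence. The paper writes out exactly this construction in the unbounded setting in Proposition~\ref{proposition: deviation approximation varying environments app}.

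Your route would additionally give approximation by strict controls, which lower hemicontinuity does not require. The direct route avoids chattering, randomization through the Brownian path, and the diagonal extraction, so the step you flagged as the main obstacle simply disappears.
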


\begin{proof}
Relative compactness of $\operatorname{Ran}(\mathcal{R})$ and continuity of
$\mathcal{R}$ follow from \cite[Lemma~4.4]{lacker2015mean}. Nonemptiness
follows by fixing a strict control and solving the corresponding state
equation in the prescribed environment. Convexity follows because the initial
law, moment and martingale constraints are affine in the joint law. For fixed
$\mu$, the value $\mathcal R(\mu)$ is closed by the closed-graph part of the
same lemma. A closed subset of the relatively compact range is compact, which
proves the compactness of each value. 
\end{proof}

The reward functional is continuous under Assumption~\ref{ass_compact-bounded}.

\begin{proposition}[Continuity of the reward]
\label{proposition: J continuity compact}
Under Assumptions~\ref{ass_standing} and \ref{ass_compact-bounded}, the map $J$ is continuous.
\end{proposition}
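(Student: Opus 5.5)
Since Proposition~\ref{proposition: J upper semicontinuity} already gives upper semicontinuity of $J$ on $\mathcal{P}^{p}(\mathcal{C}^{d})\times\mathcal{P}^{p}(\Omega[A])$, the plan is to prove only lower semicontinuity. In fact we prove directly that $J(\mu^n,P^n)\to J(\mu,P)$ whenever $\mu^n\to\mu$ in $\mathcal{P}^{p}(\mathcal{C}^{d})$ and $P^n\to P$ in $\mathcal{P}^{p}(\Omega[A])$. Under Assumption~\ref{ass_compact-bounded}, $A$ is compact, so $|a|\le C_A$ on $A$. The bounds in (A3) then become two-sided: for all $(t,x,\nu,a)$,
$$
|f(t,x,\nu,a)|+|g(x,\nu)|\le C\left(1+|x|^p+m_p^p(\nu)\right).
$$
Consequently, $|\Gamma^{\mu}(q,x)|\le C\bigl(1+\|x\|_T^p+\|\mu\|_{T,p}^p\bigr)$, where we use $m_p(\mu_t)\le\|\mu\|_{T,p}$ for every $t$. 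In particular $J$ is real-valued.

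\textbf{Step 1: replacing $\mu^n$ by $\mu$ in the integrand.} We show $\int\bigl|\Gamma^{\mu^n}-\Gamma^{\mu}\bigr|\,dP^n\to0$. By Lemma~\ref{lemma: time marginal continuity}, $\sup_t W_p(\mu^n_t,\mu_t)\to0$, so the set $K:=\{\mu_t,\mu^n_t: t\in[0,T],\,n\ge1\}$ is relatively compact in $\mathcal{P}^p(\mathbb{R}^d)$. Fix $R>0$. On the set $\{\|x\|_T\le R\}$, continuity of $f(t,\cdot,\cdot,\cdot)$ on the compact set $\{|x|\le R\}\times\overline K\times A$ gives uniform continuity for each $t$. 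Hence
$$
\sup_{|x|\le R,\,a\in A}\bigl|f(t,x,\mu^n_t,a)-f(t,x,\mu_t,a)\bigr|\to0
\quad\text{for every }t,
$$
and this quantity is bounded by a constant independent of $t$ and $n$. Dominated convergence in $t$ then handles the running cost, and the terminal cost $g$ is treated in the same way. On the complement $\{\|x\|_T>R\}$, the growth bound controls the contribution by $C\,\mathbb{E}^{P^n}\bigl[(1+\|X\|_T^p)\mathbf 1_{\{\|X\|_T>R\}}\bigr]$. Convergence of $P^n$ in $W_p$ makes $\|X\|_T^p$ uniformly integrable, so this term is uniformly small in $n$ once $R$ is large.

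\textbf{Step 2: continuity of $P\mapsto J(\mu,P)$ for fixed $\mu$.} We show that $\Gamma^\mu$ is continuous on $\Omega[A]$ and satisfies $|\Gamma^\mu|\le C(1+d_{\Omega}((q,x),(q_0,x_0))^p)$. Given this, $W_p$-convergence yields $\int\Gamma^\mu\,dP^n\to\int\Gamma^\mu\,dP$, which combined with Step 1 proves the claim. Continuity of $\Gamma^\mu$ is the main obstacle, because $f$ is only measurable in $t$, so we cannot simply cite weak convergence of $q^n\to q$ against a continuous test function. Take $(q^n,x^n)\to(q,x)$. First, we replace $x^n$ by $x$ in the running cost. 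This incurs an error $\int_0^T\sup_{a}|f(t,x^n_t,\mu_t,a)-f(t,x_t,\mu_t,a)|\,dt$, which tends to $0$ by uniform continuity on compacts together with dominated convergence. It then remains to show $\int\phi\,dq^n\to\int\phi\,dq$ for $\phi(t,a):=f(t,x_t,\mu_t,a)$, which is bounded, measurable in $t$, and continuous in $a$. This is the classical stable convergence of relaxed controls: since every $q^n$ has Lebesgue first marginal $dt$, weak convergence $q^n\to q$ extends to Carath\'eodory integrands. To prove this, we apply Scorza--Dragoni (or Lusin's theorem applied to the map $t\mapsto\phi(t,\cdot)\in C(A)$) to find a compact set $F\subset[0,T]$ with $\mathrm{Leb}([0,T]\setminus F)<\varepsilon$ on which $\phi$ is jointly continuous. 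We then extend $\phi|_{F\times A}$ by Tietze to a bounded continuous function, keeping the same bound. The error introduced is at most $2\|\phi\|_\infty\varepsilon$, uniformly in $n$, because $q^n(F^c\times A)=\mathrm{Leb}(F^c)$. Finally, the terminal term $g(x_T,\mu_T)$ is continuous in $x$, which completes the argument.
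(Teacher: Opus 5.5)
Your proof is correct, but it differs from the paper's, which gives no argument of its own and simply invokes \cite[Lemma~4.5]{lacker2015mean}.

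You reprove, in self-contained form, the two facts that underlie such a citation. First, your Scorza--Dragoni (Lusin--Tietze) step is the standard stable-convergence property of relaxed controls: weak convergence of measures with Lebesgue time marginal extends to integrands that are only measurable in $t$ and continuous in $a$. Second, your passage from continuity of $\Gamma^\mu$ to convergence of $J$ is the $p$-growth characterization of $W_p$-convergence. You also do not prove joint continuity of $(\mu,q,x)\mapsto\Gamma^\mu(q,x)$. Instead you decouple the environment: you bound $\Gamma^{\mu^n}-\Gamma^\mu$ uniformly on $\{\|x\|_T\le R\}$, using only continuity in $\mu$ for each fixed $t$, and control the complement by uniform integrability of $\|X\|_T^p$ under $(P^n)$.

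The trade-off is as follows. The citation is shorter, and the same lemma also gives the upper semicontinuity needed in the noncompact case, where the $-c_3|a|^{p'}$ term makes $\Gamma^\mu$ only bounded above. Your version makes explicit that compactness of $A$ enters only through the two-sided $p$-growth of $f$ and the uniformity over $a\in A$. It also shows that the boundedness of $b,\sigma$ in Assumption~\ref{ass_compact-bounded} plays no role in this statement.

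You leave two routine points implicit:
\begin{enumerate}
\item $t\mapsto\sup_{|x|\le R,\,a\in A}|f(t,x,\mu^n_t,a)-f(t,x,\mu_t,a)|$ is measurable, since the supremum can be taken over a countable dense set.
\item $t\mapsto\phi(t,\cdot)\in C(A)$ is measurable, as Lusin's theorem requires, because point evaluations generate the Borel $\sigma$-field of the separable space $C(A)$.
\end{enumerate}
Finally, for fixed $t$ the compact set $\{\mu^n_t:n\ge1\}\cup\{\mu_t\}$ already suffices, so relative compactness of $K$ over all $t$ is more than you need.
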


\begin{proof}
The conclusion follows from \cite[Lemma~4.5]{lacker2015mean}. 
\end{proof}

\begin{proposition}[Properties of the optimal law correspondence]
\label{proposition: Rstar compact-bounded}
Under Assumptions~\ref{ass_standing} and \ref{ass_compact-bounded}, the correspondence
$$
\mathcal{R}^{*}: \mathcal{P}^{p}(\mathcal{C}^{d}) \rightrightarrows
\mathcal{P}^{p}(\Omega[A]),
$$
given in Definition \ref{def:optimal admissible laws}, has nonempty compact convex values and is upper hemicontinuous. Its range is relatively compact, and its graph is closed.
\end{proposition}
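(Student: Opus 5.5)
The natural route is Berge's maximum theorem, applied to the continuous objective $J$ (Proposition~\ref{proposition: J continuity compact}) over the continuous, compact-valued constraint correspondence $\mathcal R$ (Proposition~\ref{proposition: R compact-bounded}). The structural properties (nonemptiness, compactness, convexity) come from Proposition~\ref{proposition: R compact-bounded}(1) and continuity of $J$; hemicontinuity and the closed graph come from Berge; relative compactness of the range is inherited from $\operatorname{Ran}(\mathcal R)$.

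\emph{Values.} Fix $\mu\in\mathcal P^p(\mathcal C^d)$. The set $\mathcal R(\mu)$ is nonempty and compact, and $P\mapsto J(\mu,P)$ is continuous. Since $A$ is compact and $f,g$ have the stated growth, $J(\mu,\cdot)$ is real-valued on $\mathcal P^p(\Omega[A])$. Hence the maximum over $\mathcal R(\mu)$ is attained, and $\mathcal R^*(\mu)\neq\emptyset$. The set $\mathcal R^*(\mu)=\{P\in\mathcal R(\mu):J(\mu,P)\ge \sup_{\mathcal R(\mu)}J(\mu,\cdot)\}$ is a closed subset of a compact set, hence compact.

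For convexity, note that $J(\mu,\cdot)$ is affine, being the integral of the fixed function $\Gamma^\mu$ against $P$, and that $\mathcal R(\mu)$ is convex. Any convex combination of maximizers therefore lies in $\mathcal R(\mu)$ and attains the same maximal value.

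\emph{Upper hemicontinuity and closed graph.} This is the main step, and the plan is to verify it by hand via the sequential characterization. Take $\mu^n\to\mu$ and $P^n\in\mathcal R^*(\mu^n)$.
\begin{enumerate}
\item By relative compactness of $\operatorname{Ran}(\mathcal R)$, after extracting a subsequence, $P^n\to P$ in $\mathcal P^p(\Omega[A])$.
\item By the closed graph of $\mathcal R$ (upper hemicontinuity with compact values), $P\in\mathcal R(\mu)$.
\item For optimality, fix any $S\in\mathcal R(\mu)$. Lower hemicontinuity of $\mathcal R$ gives $S^n\in\mathcal R(\mu^n)$ with $S^n\to S$, and $J(\mu^n,P^n)\ge J(\mu^n,S^n)$.
\item Passing to the limit using joint continuity of $J$ gives $J(\mu,P)\ge J(\mu,S)$, so $P\in\mathcal R^*(\mu)$.
\end{enumerate}
This shows the graph is closed. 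Combined with relative compactness of the range, it also yields upper hemicontinuity in sequential form, which is adequate because all spaces involved are metric.

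The delicate point is the lower hemicontinuity used in step 3. It is exactly what Proposition~\ref{proposition: R compact-bounded}(3) supplies, so no new work is needed beyond invoking it.

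\emph{Range.} Finally, $\operatorname{Ran}(\mathcal R^*)\subset\operatorname{Ran}(\mathcal R)$, so the range of $\mathcal R^*$ is relatively compact.
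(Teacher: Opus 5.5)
Your proof is correct and follows the paper's route. The paper also gets nonempty, compact, convex values from Proposition~\ref{proposition: R compact-bounded}, continuity of $J$ and linearity of $J(\mu,\cdot)$. It then cites Berge's maximum theorem for upper hemicontinuity, deduces the closed graph from upper hemicontinuity plus closed values, and inherits relative compactness of the range from $\operatorname{Ran}(\mathcal R)$. Your steps 1--4 are Berge's theorem unwound by hand from the same ingredients (continuity of $J$, both hemicontinuities of $\mathcal R$, compactness of $\overline{\operatorname{Ran}(\mathcal R)}$), and you simply reverse the order of deduction: closed graph first, then upper hemicontinuity.
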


\begin{proof}
For each $\mu$, Propositions~\ref{proposition: R compact-bounded} and \ref{proposition: J continuity compact} ensure that the maximum is attained, so $\mathcal{R}^{*}(\mu)$ is nonempty and compact. Convexity follows from convexity of $\mathcal{R}(\mu)$ and linearity of $J(\mu,\cdot)$. Berge's maximum theorem \cite[Theorem~17.31]{aliprantis2006infinite} gives upper hemicontinuity. Since $\mathcal R^*(\mu)$ is compact, and hence closed,
for every $\mu$, the upper hemicontinuity of $\mathcal R^*$ implies that
its graph is closed. Finally,
$$
\operatorname{Ran}(\mathcal{R}^{*}) \subset \operatorname{Ran}(\mathcal{R}),
$$
and therefore Proposition \ref{proposition: R compact-bounded}(2) implies that
$\operatorname{Ran}(\mathcal R^*)$ is relatively compact. 
\end{proof}

We next construct a compact convex set containing all state laws generated by admissible controls.

\begin{lemma}[Invariant compact set of state laws]
\label{lemma: compact invariant state set}
Under Assumptions~\ref{ass_standing} and \ref{ass_compact-bounded}, there exists a compact convex set
$\mathcal{Q} \subset \mathcal{P}^{p}(\mathcal{C}^{d})$
such that
$P\circ X^{-1}\in\mathcal{Q}$
for every $\mu\in\mathcal{P}^{p}(\mathcal{C}^{d})$ and every $P\in\mathcal{R}(\mu)$.
\end{lemma}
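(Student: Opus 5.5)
The natural candidate is the closed convex hull of the set of all state laws generated by admissible controls:
$$
\mathcal{Q}_0:=\left\{P\circ X^{-1}: P\in\mathcal{R}(\mu),\ \mu\in\mathcal{P}^{p}(\mathcal{C}^{d})\right\},
\qquad
\mathcal{Q}:=\overline{\operatorname{co}}\,\mathcal{Q}_0 .
$$
The plan has two parts. First I show that $\mathcal{Q}_0$ is relatively compact in $\mathcal{P}^{p}(\mathcal{C}^{d})$ by exhibiting explicit uniform estimates. Then I check that these estimates are stable under convex combinations and $W_p$-closure, so that $\mathcal{Q}$ is compact.

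\emph{Step 1: uniform bounds.} Under Assumption~\ref{ass_compact-bounded}, let $\|b\|_\infty,\|\sigma\|_\infty\le K$. Fix any $P\in\mathcal{R}(\mu)$ and use the representation of Proposition~\ref{proposition: martingale measure representation}. Write
$$
X_t=X_0+\int_0^t\!\!\int_A b\,\Lambda_s(da)\,ds+M_t,
$$
where $M$ is a continuous martingale with quadratic variation density bounded by $K^2$.

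For the moments, Burkholder--Davis--Gundy gives, for the exponent $p'>p$ of (A5),
$$
\mathbb{E}^P\|X\|_T^{p'}\le C\bigl(m_{p'}(\lambda)^{p'}+K^{p'}T^{p'}+K^{p'}T^{p'/2}\bigr)=:C_0 ,
$$
with $C_0$ independent of $\mu$ and $P$.

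For the modulus of continuity, take $0\le s<t\le T$ with $t-s\le h$. The drift increment is at most $Kh$. For the martingale part, BDG with a large exponent $r$ gives $\mathbb{E}|M_t-M_s|^{r}\le C_rK^r h^{r/2}$. Kolmogorov's criterion then yields a bound $\mathbb{E}\,[\omega_M(h)^{r'}]\le C h^{\beta}$ for some $\beta>0$, where $\omega_M$ is the modulus of continuity. Alternatively, one can use the Aldous/tightness criterion on increments directly. Both the moment bound and the modulus bound depend only on $K$, $T$ and $\lambda$.

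\emph{Step 2: compactness.} Define $\mathcal{K}$ as the set of $\nu\in\mathcal{P}(\mathcal{C}^d)$ such that:
\begin{itemize}
\item $\nu\circ e_0^{-1}=\lambda$;
\item $\int\|x\|_T^{p'}\,d\nu\le C_0$;
\item $\int \min(1,\omega_x(h))\,\nu(dx)\le \rho(h)$ for all $h$, where $\rho(h)\to0$ is the modulus bound from Step 1.
\end{itemize}
Each of these constraints is affine in $\nu$, so $\mathcal{K}$ is convex. Each constraint is also weakly closed: the first two use lower semicontinuity of $\nu\mapsto\int\|x\|_T^{p'}d\nu$, and the third uses continuity of $x\mapsto\min(1,\omega_x(h))$.

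Tightness of $\mathcal{K}$ follows from Arzel\`a--Ascoli, since the initial law is fixed and the modulus is uniformly controlled. Uniform integrability of $\|x\|_T^p$ follows from the $p'$-moment bound, because $p'>p$. Together these give relative compactness in $W_p$. Since $W_p$-convergence implies weak convergence and $\mathcal{K}$ is weakly closed, $\mathcal{K}$ is $W_p$-closed, hence $W_p$-compact and convex.

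By Step 1, $\mathcal{Q}_0\subset\mathcal{K}$. Therefore $\mathcal{Q}:=\overline{\operatorname{co}}\,\mathcal{Q}_0\subset\mathcal{K}$ is a closed subset of a compact set, so $\mathcal{Q}$ is compact and convex. Alternatively, one may simply take $\mathcal{Q}=\mathcal{K}$.

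\emph{Main obstacle.} The only delicate point is formulating the modulus-of-continuity constraint so that it is simultaneously convex, closed, and implies tightness. Using an expectation constraint $\int\min(1,\omega_x(h))\,d\nu\le\rho(h)$, instead of a set-probability constraint, resolves this. The bound $\rho(h)$ itself comes from BDG applied to $\sup_{|t-s|\le h}|M_t-M_s|$. A simple route is to partition $[0,T]$ into blocks of length $h$, bound the supremum of the increment on each block by BDG, and apply a union bound. This gives $\mathbb{E}\,\omega_M(h)^{r}\le C(T/h)\,h^{r/2}$, which tends to $0$ once $r>2$.
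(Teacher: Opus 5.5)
Your argument is correct, but it reaches compactness by a different route from the paper. Both proofs start from the same BDG bound on $\mathbb{E}^P\|X\|_T^{p'}$, uniform over $\mu$ and $P\in\mathcal{R}(\mu)$.

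The paper encodes equicontinuity intrinsically through the generator. Its $\mathcal{Q}$ consists of the laws with initial law $\lambda$, $p'$-moment at most $M$, and such that $\varphi(X_t)+C_\varphi t$ is a submartingale for every nonnegative $\varphi\in C_c^\infty(\mathbb{R}^d)$. Here $C_\varphi\ge\sup|\mathcal{L}^{\mu,a}_t\varphi|$ depends only on the bounds of $D\varphi$ and $D^2\varphi$. Membership of every state marginal then follows in one line from Definition~\ref{definition: admissible law}. Tightness is imported from Stroock--Varadhan's Theorem~1.4.6. Closedness requires passing to the limit with bounded continuous $\mathcal{F}^X_s$-measurable test functions, followed by a monotone-class extension.

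You instead extract an explicit modulus $\rho(h)$ from the representation of Proposition~\ref{proposition: martingale measure representation} and a blockwise BDG estimate. You then impose it as the expectation constraint $\int\min(1,\omega_x(h))\,\nu(dx)\le\rho(h)$. Since $x\mapsto\min(1,\omega_x(h))$ is bounded and continuous, and $x\mapsto\|x\|_T^{p'}$ is nonnegative and continuous, convexity and weak (hence $W_p$) closedness are immediate. Tightness reduces to Markov's inequality plus Arzel\`a--Ascoli.

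Your route is more elementary and self-contained: it avoids both the Stroock--Varadhan criterion and the monotone-class step. The price is an explicit stochastic-calculus estimate on increments. The paper's route needs no increment estimate and works directly with the martingale problem.

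Two cosmetic points. The initial-law constraint is closed by continuity of $\nu\mapsto\nu\circ e_0^{-1}$, not by lower semicontinuity. The blockwise bound carries a harmless factor $3^r\lceil T/h\rceil$, coming from $\omega_M(h)\le 3\max_k\sup_{u\in[t_k,t_{k+1}]}|M_u-M_{t_k}|$.
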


\begin{proof}
By boundedness of $b$ and $\sigma$, a standard
Burkholder--Davis--Gundy estimate gives a constant $C>0$ such that
$$
\mathbb{E}^{P} \left[ \|X\|_T^{p'} \right] \leq
C \left(1+\int_{\mathbb{R}^{d}} |x|^{p'}\,\lambda(dx)\right)
$$
for every $\mu\in\mathcal{P}^{p}(\mathcal{C}^{d})$ and every
$P\in\mathcal{R}(\mu)$. In particular,
$$
M := \sup \left\{ \mathbb{E}^{P} \left[\|X\|_T^{p'}\right]:
\mu\in\mathcal{P}^{p}(\mathcal{C}^{d}), P\in\mathcal{R}(\mu)\right\} <\infty.
$$
By Assumption~\ref{ass_compact-bounded}, for each
$\varphi\in C_c^\infty(\mathbb{R}^{d})$ there exists
$C_\varphi>0$ such that
$\left| \mathcal{L}^{\mu,a}_t\varphi(x) \right| \leq C_\varphi$,
for every $(t,x,\mu,a)\in[0,T]\times\mathbb{R}^{d} \times\mathcal{P}^{p}(\mathbb{R}^{d})\times A.$
Moreover, $C_\varphi$ depends only on the bounds of
$D\varphi$ and $D^2\varphi$.

Let $\mathcal{Q}$ denote the set of probability measures
$\nu$ on $\mathcal{C}^{d}$ satisfying the following conditions:
\begin{enumerate}
\item $\nu\circ X_0^{-1} = \lambda;$
\item $\|\nu\|_{T,p'}^{p'} \leq M;$
\item for every nonnegative
$\varphi\in C_c^\infty(\mathbb{R}^{d})$, the process $\varphi(X_t)+C_\varphi t$ is a $\nu$-submartingale with respect to the canonical
filtration $(\mathcal{F}^{X}_t)_{t\in[0,T]}$.
\end{enumerate}

It is clear that $\mathcal{Q}$ is convex. We next show that it
contains the state marginal of every admissible law. Fix
$\mu\in\mathcal{P}^{p}(\mathcal{C}^{d})$ and $P\in\mathcal{R}(\mu)$. For each nonnegative $\varphi\in C_c^\infty(\mathbb{R}^{d})$,
$$
\varphi(X_t)+C_\varphi t = M^{\mu,\varphi}_t +
\int_0^t\int_A \left(\mathcal{L}^{\mu,a}_s\varphi(X_s) +
C_\varphi \right) \Lambda_s(da)\,ds.
$$
The first term is a $P$-martingale, while the second term is
nondecreasing. Hence
$\varphi(X_t)+C_\varphi t$ is a $P$-submartingale and therefore
a $P\circ X^{-1}$-submartingale with respect to the canonical
state filtration. The definition of $M$ also gives
$\|P\circ X^{-1}\|_{T,p'}^{p'} \leq M$.
Thus $P\circ X^{-1} \in \mathcal{Q}$.

By \cite[Theorem~1.4.6]{stroock2007multidimensional},
the submartingale condition implies that $\mathcal{Q}$ is tight in
$\mathcal{P}(\mathcal{C}^{d})$. The uniform $p'$-moment bound
in condition~\textnormal{(2)}, together with $p'>p$, implies
uniform $p$-integrability. Hence $\mathcal{Q}$ is relatively
compact in $\mathcal{P}^{p}(\mathcal{C}^{d})$.

Finally, $\mathcal{Q}$ is closed in $\mathcal{P}^p(C^d)$. Indeed, let
$\nu_n\in \mathcal{Q}$ and $\nu_n\to\nu$ in $\mathcal{P}^p(C^d)$. The initial-law
constraint passes to the limit by continuity of the evaluation map
$x\mapsto x_0$, while the moment bound is preserved by lower
semicontinuity. To verify the submartingale condition, fix
$0\leq s<t\leq T$, a nonnegative $\phi\in C_c^\infty(\mathbb{R}^d)$,
and a bounded nonnegative continuous $\mathcal{F}_s^X$-measurable
function $h$. Since $\nu_n\in \mathcal{Q}$,
$$
\int h\bigl(\phi(X_t)-\phi(X_s)+C_\phi(t-s)\bigr)\,d\nu_n\geq0.
$$
The integrand is bounded and continuous, so the same inequality holds
under $\nu$ by weak convergence. A standard monotone-class argument
then yields the submartingale property under $\nu$. Hence $\nu\in \mathcal{Q}$,
and therefore $\mathcal{Q}$ is compact.  
\end{proof}

\subsection{Admissible laws with prescribed control marginal}
\label{subsection: prescribed control marginal}

We use the atomic probability measure with full support introduced in Subsection~\ref{subsection: full support perturbations}. Thus, for a countable dense subset $(q^k)_{k\geq1}$ of $\mathcal{V}[A]$, set $\eta := \sum_{k=1}^{\infty}2^{-k}\delta_{q^k}.$
For each $\nu\in\mathcal{P}^{p}(\mathcal{C}^{d})$, define
$$
\mathcal{R}_{\eta}(\nu) := 
\left\{R\in\mathcal{R}(\nu):R\circ\Lambda^{-1}=\eta\right\}.
$$

\begin{proposition}[The prescribed marginal correspondence]
\label{proposition: Reta properties}
For every $\nu\in\mathcal{P}^{p}(\mathcal{C}^{d})$, the set $\mathcal{R}_{\eta}(\nu)$ is nonempty, compact, and convex. Moreover, the correspondence
$$
\mathcal{R}_{\eta}: \mathcal{P}^{p}(\mathcal{C}^{d}) \rightrightarrows
\mathcal{P}^{p}(\Omega[A])
$$
is upper hemicontinuous.
\end{proposition}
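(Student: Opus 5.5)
Convexity, compactness and upper hemicontinuity will all come from Proposition~\ref{proposition: R compact-bounded}. Nonemptiness needs a construction, and I expect it to be the main obstacle.

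\emph{Convexity.} $\mathcal R(\nu)$ is convex, and the constraint $R\circ\Lambda^{-1}=\eta$ is affine in $R$. Hence $\mathcal R_\eta(\nu)$ is convex.

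\emph{Closedness and compactness.} The map $R\mapsto R\circ\Lambda^{-1}$ is continuous from $\mathcal P^p(\Omega[A])$ into $\mathcal P^p(\mathcal V[A])$. So $\{R: R\circ\Lambda^{-1}=\eta\}$ is closed, and $\mathcal R_\eta(\nu)$ is a closed subset of the compact set $\mathcal R(\nu)$, hence compact.

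\emph{Upper hemicontinuity.} The graph of $\mathcal R_\eta$ is the intersection of the graph of $\mathcal R$ with the closed set $\mathcal P^p(\mathcal C^d)\times\{R:R\circ\Lambda^{-1}=\eta\}$. The graph of $\mathcal R$ is closed by Proposition~\ref{proposition: R compact-bounded}(3) and compact values, so the graph of $\mathcal R_\eta$ is closed. Its range lies in $\operatorname{Ran}(\mathcal R)$, which is relatively compact. A correspondence with closed graph and relatively compact range between metric spaces is upper hemicontinuous (the closed graph theorem, \cite[Theorem~17.11]{aliprantis2006infinite}). For sequences: if $\nu_n\to\nu$ and $R_n\in\mathcal R_\eta(\nu_n)$, extract a convergent subsequence $R_n\to R$; closedness of the graph then gives $R\in\mathcal R_\eta(\nu)$.

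\emph{Nonemptiness.} The idea is to run each deterministic relaxed control $q^k$ as an open-loop input and mix the resulting laws with weights $2^{-k}$.
\begin{enumerate}
\item For each $k$, construct a law $R^k\in\mathcal R(\nu)$ with $R^k\circ\Lambda^{-1}=\delta_{q^k}$. Take the time-inhomogeneous coefficients $\bar b(t,x)=\int_A b(t,x,\nu_t,a)\,q^k_t(da)$ and $\bar\sigma\bar\sigma^\top(t,x)=\int_A\sigma\sigma^\top(t,x,\nu_t,a)\,q^k_t(da)$. These are bounded and measurable in $t$. The drift $\bar b$ is Lipschitz in $x$ by (A2). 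For the diffusion, use the martingale measure representation of Proposition~\ref{proposition: martingale measure representation}: build orthogonal martingale measures with intensity $q^k_t(da)\,dt$ from a Brownian sheet or white noise on $A\times[0,T]$ with that intensity. With coefficients Lipschitz in $x$, the equation $dX_t=\int_A b\,q^k_t(da)\,dt+\int_A\sigma\,N(da,dt)$ has a strong solution with $X_0\sim\lambda$ (El Karoui--M\'el\'eard). Its law $R^k$, with $\Lambda\equiv q^k$, lies in $\mathcal R(\nu)$; condition (2) of Definition~\ref{definition: admissible law} holds trivially since $A$ is compact.
\item Set $R:=\sum_k 2^{-k}R^k$. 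Then $R\circ\Lambda^{-1}=\eta$ and $R\circ X_0^{-1}=\lambda$. By Lemma~\ref{lemma: state estimate}, $\sup_k \mathbb E^{R^k}\|X\|_T^p<\infty$, so $R$ has finite $p$-moment.
\item Check the martingale property for the mixture. This does not follow from convexity alone, since the sum is countable. For $s<t$ and bounded $\mathcal F_s$-measurable $h$, the identity $\mathbb E^{R}[h(M_t-M_s)]=\sum_k2^{-k}\mathbb E^{R^k}[h(M_t-M_s)]=0$ holds by dominated convergence, because $M^{\nu,\varphi}$ is bounded under Assumption~\ref{ass_compact-bounded}.
\end{enumerate}

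The delicate points are the existence of the martingale-measure construction for a fixed deterministic relaxed control in step 1, and the countable-mixture martingale check in step 3.
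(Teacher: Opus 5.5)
Your proposal is correct and follows essentially the same route as the paper. For nonemptiness, both take the countable mixture $\sum_k 2^{-k}\operatorname{Law}(q^k,X^{k,\nu})$ built from the El Karoui--M\'el\'eard martingale-measure construction; convexity and compactness are inherited from $\mathcal R(\nu)$ through the continuous, affine control projection; and upper hemicontinuity comes from relative compactness of $\operatorname{Ran}(\mathcal R)$ together with the closed graph of $\mathcal R$. Your explicit check of the countable mixture's martingale property, using boundedness of $M^{\nu,\varphi}$ under Assumption~\ref{ass_compact-bounded}, usefully sharpens the paper's brief remark that these constraints are affine in the law.
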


\begin{proof}
For each $k\geq1$, fix a measurable disintegration
$$
q^k(dt,da)=dt\,q_t^k(da).
$$
Since $q^k$ is deterministic, the construction of
\cite[Proposition~IV.1]{karoui1990martingale} provides a filtered
probability space carrying $m$ orthogonal martingale measures
$N^k=(N^{k,1},\ldots,N^{k,m})$ on $A\times[0,T]$, each with intensity
$q_t^k(da)\,dt$, together with an $\mathcal F_0$-measurable initial state of
law $\lambda$, independent of the martingale measures. The Lipschitz and
growth assumptions then yield pathwise uniqueness and a strong solution
$X^{k,\nu}$ of
$$
dX_t^{k,\nu} = \int_A b(t,X_t^{k,\nu},\nu_t,a)q_t^k(da)\,dt +
\int_A \sigma(t,X_t^{k,\nu},\nu_t,a)N^k(da,dt),
\qquad
X_0^{k,\nu}\sim\lambda.
$$
Set $R^{k,\nu} := \operatorname{Law}(q^k,X^{k,\nu}).$
By Proposition~\ref{proposition: martingale measure representation},
$$
R^{k,\nu}\in\mathcal{R}(\nu),
\qquad
R^{k,\nu}\circ\Lambda^{-1} = \delta_{q^k}.
$$
Define
$$
R^\nu := \sum_{k=1}^{\infty}2^{-k}R^{k,\nu}.
$$
The initial law constraint, the control integrability condition in Definition~\ref{definition: admissible law}(2), and the
martingale identities defining $\mathcal{R}(\nu)$ are affine in the law.
Since $A$ is compact, the required control moment is uniformly bounded.
Therefore $R^\nu\in\mathcal{R}(\nu).$
Moreover,
$$
R^\nu\circ\Lambda^{-1} = \sum_{k=1}^{\infty} 2^{-k}\delta_{q^k} = \eta.
$$
Thus $R^\nu\in\mathcal{R}_\eta(\nu),$ which proves nonemptiness.

Convexity follows from convexity of $\mathcal{R}(\nu)$ and the affine
marginal constraint. Since the map $R\mapsto R\circ\Lambda^{-1}$ is continuous, $\mathcal{R}_\eta(\nu)$ is a closed subset of the compact
set $\mathcal{R}(\nu)$ and is therefore compact.
Finally, let $\nu^k\rightarrow\nu$ in $\mathcal{P}^{p}(\mathcal{C}^{d})$, and let $R^k\in\mathcal{R}_\eta(\nu^k).$
Relative compactness of $\operatorname{Ran}(\mathcal{R})$ and the closed
graph property of $\mathcal{R}$ imply that every limit point $R$ of
$(R^k)$ belongs to $\mathcal{R}(\nu)$. Continuity of the relaxed control
projection gives
$R\circ\Lambda^{-1} = \eta$.
Thus $R\in\mathcal{R}_\eta(\nu),$ which proves upper hemicontinuity.  
\end{proof}

\subsection{The perturbed fixed point problem}
\label{subsection: perturbed fixed point compact}

Fix $\varepsilon\in(0,1)$. For $\nu\in\mathcal{Q}$, define
$$
\mathcal{G}^{\varepsilon}(\nu) :=
\bigl\{(1-\varepsilon)Q\circ X^{-1} + \varepsilon R\circ X^{-1}:
Q\in\mathcal{R}^{*}(\nu),\, R\in\mathcal{R}_{\eta}(\nu)\bigr\}.
$$

\begin{proposition}[Perturbed fixed point]
\label{proposition: perturbed fixed point existence}
For every $\varepsilon\in(0,1)$, there exist
$\nu^{\varepsilon}\in\mathcal{Q}$,
$Q^{\varepsilon}\in\mathcal{R}^{*}(\nu^{\varepsilon})$,
$R^{\varepsilon}\in\mathcal{R}_{\eta}(\nu^{\varepsilon})$
such that
$$
\nu^{\varepsilon} =
(1-\varepsilon)Q^{\varepsilon}\circ X^{-1} + \varepsilon R^{\varepsilon}\circ X^{-1}.
$$
\end{proposition}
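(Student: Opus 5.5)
We apply the Kakutani--Fan--Glicksberg fixed point theorem to the correspondence $\mathcal G^\varepsilon:\mathcal Q\rightrightarrows\mathcal Q$. The set $\mathcal Q$ of Lemma~\ref{lemma: compact invariant state set} is a nonempty compact convex subset of the locally convex space of finite signed measures on $\mathcal C^d$ with finite $p$-th moment. On $\mathcal Q$ the $W_p$ topology agrees with the weak topology induced by integration against continuous functions of growth at most $|x|^p$, so $\mathcal Q$ sits inside a Hausdorff locally convex topological vector space. It therefore suffices to show three things: $\mathcal G^\varepsilon$ maps $\mathcal Q$ into nonempty convex subsets of $\mathcal Q$, and it has closed graph. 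Compactness of values then follows because they are closed subsets of the compact $\mathcal Q$.

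\emph{Values.} Fix $\nu\in\mathcal Q$. By Proposition~\ref{proposition: Rstar compact-bounded} the set $\mathcal R^*(\nu)$ is nonempty, and by Proposition~\ref{proposition: Reta properties} the set $\mathcal R_\eta(\nu)$ is nonempty, so $\mathcal G^\varepsilon(\nu)\neq\emptyset$.

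For $Q\in\mathcal R^*(\nu)\subset\mathcal R(\nu)$ and $R\in\mathcal R_\eta(\nu)\subset\mathcal R(\nu)$, the laws $Q\circ X^{-1}$ and $R\circ X^{-1}$ lie in $\mathcal Q$ by Lemma~\ref{lemma: compact invariant state set}. Their convex combination lies in $\mathcal Q$ by convexity of $\mathcal Q$.

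Convexity of $\mathcal G^\varepsilon(\nu)$ follows from three facts: $\mathcal R^*(\nu)$ and $\mathcal R_\eta(\nu)$ are convex, the map $P\mapsto P\circ X^{-1}$ is affine, and the product of two convex sets is convex. Indeed, a convex combination of $(1-\varepsilon)Q_i\circ X^{-1}+\varepsilon R_i\circ X^{-1}$, $i=1,2$, equals the same expression built from the corresponding convex combinations of the $Q_i$ and of the $R_i$.

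\emph{Closed graph.} Let $\nu^n\to\nu$ in $\mathcal Q$ and $\rho^n=(1-\varepsilon)Q^n\circ X^{-1}+\varepsilon R^n\circ X^{-1}\in\mathcal G^\varepsilon(\nu^n)$ with $\rho^n\to\rho$. Since $Q^n,R^n\in\operatorname{Ran}(\mathcal R)$, which is relatively compact by Proposition~\ref{proposition: R compact-bounded}(2), we pass to a subsequence along which $Q^n\to Q$ and $R^n\to R$ in $\mathcal P^p(\Omega[A])$.

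The graph of $\mathcal R^*$ is closed by Proposition~\ref{proposition: Rstar compact-bounded}, so $Q\in\mathcal R^*(\nu)$. Upper hemicontinuity of $\mathcal R_\eta$ with compact values (Proposition~\ref{proposition: Reta properties}), or directly the closed-graph argument in its proof, gives $R\in\mathcal R_\eta(\nu)$.

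The projection $P\mapsto P\circ X^{-1}$ is continuous, and the mixture map $(\alpha,\beta)\mapsto(1-\varepsilon)\alpha+\varepsilon\beta$ is continuous in $W_p$; one can verify this with the mixture coupling, or note that it is affine and continuous for the weak-plus-moment topology. Hence $\rho=(1-\varepsilon)Q\circ X^{-1}+\varepsilon R\circ X^{-1}\in\mathcal G^\varepsilon(\nu)$.

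Kakutani--Fan--Glicksberg now yields a fixed point $\nu^\varepsilon\in\mathcal G^\varepsilon(\nu^\varepsilon)$. Unpacking the definition of $\mathcal G^\varepsilon$ gives the required $Q^\varepsilon$ and $R^\varepsilon$.

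The main technical point is embedding $\mathcal Q$ in a locally convex space whose topology restricts to the $W_p$ topology on $\mathcal Q$. We handle this as in \cite{lacker2015mean}: use the space of signed measures $\mu$ with $\int(1+|x|^p)\,d|\mu|<\infty$, topologized by duality with continuous functions of $p$-growth. On sets with uniformly bounded $p'$-moments, this topology coincides with $W_p$ convergence, because $p'>p$ gives uniform integrability.
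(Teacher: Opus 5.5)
Your proof is correct and follows the paper's argument essentially step for step: nonempty convex values in $\mathcal Q$ via Lemma~\ref{lemma: compact invariant state set}, a closed graph via relative compactness of $\operatorname{Ran}(\mathcal R)$ together with the closedness of $\mathcal R^*$ and $\mathcal R_\eta$, and then Kakutani--Fan--Glicksberg. The only difference is the ambient space: the paper uses finite signed measures with the $\sigma(\mathcal M(\mathcal C^d),C_b(\mathcal C^d))$ topology, which agrees with $W_p$ on the compact set $\mathcal Q$ because a continuous bijection from a compact space onto a Hausdorff space is a homeomorphism, whereas your $p$-growth duality topology already agrees with $W_p$ on all of $\mathcal P^p(\mathcal C^d)$, so the $p'$-moment remark at the end of your proposal is not actually needed.
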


\begin{proof}
Propositions~\ref{proposition: Rstar compact-bounded} and \ref{proposition: Reta properties} imply that $\mathcal{G}^{\varepsilon}$ has nonempty compact convex values. Moreover, if
$$
\theta = (1-\varepsilon)Q\circ X^{-1} + \varepsilon R\circ X^{-1}
\in \mathcal{G}^{\varepsilon}(\nu),
$$
then $Q,R\in\mathcal{R}(\nu)$. Lemma~\ref{lemma: compact invariant state set} and convexity of $\mathcal{Q}$ therefore give $\theta\in\mathcal{Q}$.

To prove upper hemicontinuity, let $\nu^k\to\nu$ in $\mathcal{Q}$ and $\theta^k \in
\mathcal{G}^{\varepsilon}(\nu^k),$ with $\theta^k\rightarrow\theta.$
Choose
$Q^k\in\mathcal{R}^{*}(\nu^k)$ and $R^k\in\mathcal{R}_{\eta}(\nu^k)$
such that
$$
\theta^k = (1-\varepsilon)Q^k\circ X^{-1} + \varepsilon R^k\circ X^{-1}.
$$
Relative compactness of $\operatorname{Ran}(\mathcal{R})$ permits a common subsequence, not relabelled, such that
$Q^k\rightarrow Q$ and $R^k\rightarrow R$
in $\mathcal{P}^{p}(\Omega[A])$. Upper hemicontinuity of $\mathcal{R}^{*}$ and $\mathcal{R}_{\eta}$ gives $Q\in\mathcal{R}^{*}(\nu)$ and $R\in\mathcal{R}_{\eta}(\nu)$.
Passing to the limit in the convex mixture yields
$$
\theta = (1-\varepsilon)Q\circ X^{-1} + \varepsilon R\circ X^{-1},
$$
and hence $\theta\in\mathcal{G}^{\varepsilon}(\nu)$.

Let $\mathcal{M}(\mathcal{C}^{d})$ denote the vector space of finite signed measures on $\mathcal{C}^{d}$, endowed with the weak topology $\sigma\bigl(\mathcal{M}(\mathcal{C}^{d}),C_b(\mathcal{C}^{d})\bigr).$
This is a locally convex Hausdorff topological vector space. Since $\mathcal{Q}$ is compact in $\mathcal{P}^{p}(\mathcal{C}^{d})$, the $p$-Wasserstein and weak topologies coincide on $\mathcal{Q}$. Thus $\mathcal{Q}$ is a compact convex subset of $\mathcal{M}(\mathcal{C}^{d})$ in the weak topology. The Kakutani--Fan--Glicksberg fixed point theorem \cite[Corollary~17.55]{aliprantis2006infinite}, gives $\nu^{\varepsilon} \in \mathcal{G}^{\varepsilon}(\nu^{\varepsilon})$,
and the stated representation follows from the definition of $\mathcal{G}^{\varepsilon}$.  
\end{proof}

\subsection{Vanishing perturbations}
\label{subsection: vanishing perturbation compact}

We now complete the proof under Assumption~\ref{ass_compact-bounded}.

\begin{proof}[Proof of Theorem~\ref{thm: compact-bounded thm}]
For each $\varepsilon\in(0,1)$, let $\nu^{\varepsilon}, Q^{\varepsilon}, R^{\varepsilon}$ be as in Proposition~\ref{proposition: perturbed fixed point existence}, and define
$$
P^{\varepsilon} := (1-\varepsilon)Q^{\varepsilon} + \varepsilon R^{\varepsilon}.
$$
The fixed point identity gives
$\mu^{P^{\varepsilon}} = P^{\varepsilon}\circ X^{-1} = \nu^{\varepsilon}$.
Moreover,
$$
Q^{\varepsilon} \in \mathcal{R}^{*}(\nu^{\varepsilon}) \subset
\mathcal{R}(\nu^{\varepsilon}),
\qquad
R^{\varepsilon} \in \mathcal{R}_{\eta}(\nu^{\varepsilon}) \subset
\mathcal{R}(\nu^{\varepsilon}).
$$
By convexity of $\mathcal{R}(\nu^{\varepsilon})$, we have
$P^{\varepsilon} \in \mathcal{R}(\mu^{P^{\varepsilon}})$ and $Q^{\varepsilon} \in \mathcal{R}^{*}(\mu^{P^{\varepsilon}})$.
For every nonempty open set $O\subset\mathcal{V}[A]$,
$$
\begin{aligned}
\bigl(P^{\varepsilon}\circ\Lambda^{-1}\bigr)(O) =
(1-\varepsilon) \bigl(Q^{\varepsilon}\circ\Lambda^{-1}\bigr)(O) +
\varepsilon \bigl(R^{\varepsilon}\circ\Lambda^{-1}\bigr)(O) \geq
\varepsilon\eta(O)>0.
\end{aligned}
$$
Hence $P^{\varepsilon}\circ\Lambda^{-1}$ has full support on $\mathcal{V}[A]$.

Choose $\varepsilon_j\downarrow0$. Relative compactness of $\operatorname{Ran}(\mathcal{R})$ yields, along a subsequence,
$Q^{\varepsilon_j} \rightarrow \widehat{P}$ in $\mathcal{P}^{p}(\Omega[A])$. 
The standard mixture coupling gives
$$
W_{p,\Omega[A]}^{p} \left(P^{\varepsilon_j},Q^{\varepsilon_j}\right) \leq
\varepsilon_j W_{p,\Omega[A]}^{p} \left(R^{\varepsilon_j},Q^{\varepsilon_j}\right).
$$
Since $R^{\varepsilon_j}$ and $Q^{\varepsilon_j}$ belong to
$\operatorname{Ran}(\mathcal R)$, and the closure of
$\operatorname{Ran}(\mathcal R)$ is compact in
$\mathcal P^p(\Omega[A])$, the distances on the right-hand side are
uniformly bounded. Hence $W_{p,\Omega[A]}\left(P^{\varepsilon_j},Q^{\varepsilon_j}\right)\rightarrow 0.$
Since $Q^{\varepsilon_j}\to\widehat P$ in $\mathcal P^p(\Omega[A])$, it follows that
$P^{\varepsilon_j}\rightarrow\widehat P$ in $\mathcal P^p(\Omega[A])$.

For every $j$, we have $Q^{\varepsilon_j} \in \mathcal{R}^{*}(\mu^{P^{\varepsilon_j}})$.
Continuity of the state projection gives $\mu^{P^{\varepsilon_j}} \rightarrow \mu^{\widehat{P}}$ in $\mathcal{P}^{p}(\mathcal{C}^{d})$. The closed graph property of $\mathcal{R}^{*}$ then yields $\widehat{P} \in \mathcal{R}^{*}(\mu^{\widehat{P}})$.
Thus $\widehat{P}$ is a relaxed MFG equilibrium. Finally, the sequences
$\bigl(P^{\varepsilon_j}\bigr)_{j\geq1}$ and $\bigl(Q^{\varepsilon_j}\bigr)_{j\geq1}$ satisfy Definition~\ref{definition: thp}, and therefore $\widehat{P}$ is a THP equilibrium.  
\end{proof}

\section{Existence under the general assumptions}
\label{section: general existence}

In this section we prove Theorem~\ref{thm: general thp} under Assumption~\ref{ass_standing}. We combine the perturbed fixed point construction of Section~\ref{section: compact-bounded existence} with the truncation method of \cite[Section~5]{lacker2015mean}. The proof has two steps. For each fixed perturbation level $\varepsilon_j>0$, we first remove the truncation by letting $n\to\infty$. We then let $\varepsilon_j\to0$ and obtain the two sequences required by Definition~\ref{definition: thp}.

\subsection{Truncated perturbed problems}
\label{subsection: truncated perturbed problems}

Let $c_1$ be the constant in Assumption~\ref{ass_standing}\textnormal{(A2)}. For $n\geq1$, set
$$
r_n:=\left(\frac{n}{2c_1}\right)^{1/2},
\qquad
A_n:=A\cap\bar{B}(0,r_n).
$$
Since $A$ is nonempty and closed, there exists $n_0$ such that $A_n$ is nonempty and compact for every $n\geq n_0$. Let $b_n$ and $\sigma_n$ be the pointwise Euclidean projections of $b$ and $\sigma$ onto the closed balls of radius $n$ in $\mathbb{R}^d$ and $\mathbb{R}^{d\times m}$, respectively. As in \cite[Section~5]{lacker2015mean}, the truncated data $(b_n,\sigma_n,f,g,A_n,\lambda)$ satisfy Assumptions~\ref{ass_standing} and~\ref{ass_compact-bounded}, with the relevant bounds chosen uniformly in $n$. We identify $\mathcal{V}[A_n]$ and $\Omega[A_n]$ with their natural images in $\mathcal{V}[A]$ and $\Omega[A]$, and denote by $\mathcal{R}_n(\mu)$ and $\mathcal{R}_n^*(\mu)$ the admissible and optimal law correspondences associated with the truncated data. The reward functional is not truncated.

Let $\eta\in\mathcal{P}^{p}(\mathcal{V}[A])$
be the probability measure constructed in Proposition~\ref{proposition: global full-support law appendix}. Thus $\eta$ has full support and
$$
\int_{\mathcal{V}[A]} \left(\int_0^T\int_A |a|^{p'}q_t(da)\,dt\right)\eta(dq)<\infty.
$$
Fix $a_0\in A$ and increase $n_0$, if necessary, so that
$a_0\in A_n$ for every $n\ge n_0$. In particular,
$|a_0|\le r_n$ for every $n\ge n_0$. If $a\notin A_n$, then
$|a|>r_n\ge |a_0|$; consequently, the map defined below satisfies
$|\pi_n(a)|\le |a|$ for every $a\in A$. 

Define
$$
\pi_n(a) :=
\begin{cases}
a, & a\in A_n,\\
a_0, & a\notin A_n,
\end{cases}
$$
and, for $q\in\mathcal{V}[A]$,
$$
(\Pi_nq)(dt,da) := dt\,q_t\circ\pi_n^{-1}(da).
$$
Set $\eta_n:=\eta\circ\Pi_n^{-1}$.
By Proposition~\ref{proposition: eta-n approximation appendix}, when $\eta_n$ is viewed as a probability measure on $\mathcal{V}[A]$, $\eta_n\rightarrow\eta$ in $\mathcal{P}^{p}(\mathcal{V}[A])$, and
$$
\sup_{n\geq n_0} \int_{\mathcal{V}[A_n]}\left(\int_0^T\int_{A_n}|a|^{p'}q_t(da)\,dt
\right)\eta_n(dq)<\infty.
$$
Fix a sequence $(\varepsilon_j)_{j\geq1}\subset(0,1)$ such that $\varepsilon_j\downarrow0$. For $\nu\in\mathcal{P}^p(\mathcal{C}^d)$, define
$$
\mathcal{R}_{n,\eta_n}(\nu) := 
\left\{R\in\mathcal{R}_n(\nu):R\circ\Lambda^{-1}=\eta_n\right\}.
$$
Since $\eta$ is countably supported, so is $\eta_n$. Moreover, $\eta_n\in\mathcal{P}^{p}(\mathcal{V}[A_n])$. The arguments of
Propositions~\ref{proposition: Reta properties}
and~\ref{proposition: perturbed fixed point existence} therefore apply to the truncated data with $\eta$ replaced by $\eta_n$ and $\mathcal{R}_{\eta}$ replaced by $\mathcal{R}_{n,\eta_n}$.
Full support of $\eta_n$ is not required for the fixed point argument.

\begin{proposition}[Perturbed fixed points for the truncated problems]
\label{proposition: truncated perturbed fixed points}
For every $j\geq1$ and $n\geq n_0$, there exist laws $Q^{j,n}\in\mathcal{R}_n^*(\nu^{j,n})$ and $R^{j,n}\in\mathcal{R}_{n,\eta_n}(\nu^{j,n})$,
for some $\nu^{j,n}\in\mathcal{P}^p(\mathcal{C}^d)$, such that, with $P^{j,n} := (1-\varepsilon_j)Q^{j,n} + \varepsilon_jR^{j,n}$, we have
$$
\mu^{P^{j,n}}=\nu^{j,n},
\qquad
P^{j,n}\in\mathcal{R}_n(\mu^{P^{j,n}}),
\qquad
Q^{j,n}\in\mathcal{R}_n^*(\mu^{P^{j,n}}),
$$
and
$$
P^{j,n}\circ\Lambda^{-1} = (1-\varepsilon_j)Q^{j,n}\circ\Lambda^{-1} + \varepsilon_j\eta_n.
$$
\end{proposition}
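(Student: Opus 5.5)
The plan is to rerun the compact-bounded argument of Section~\ref{section: compact-bounded existence} for the truncated data $(b_n,\sigma_n,f,g,A_n,\lambda)$ at fixed $j$ and $n\ge n_0$. By the choice of $r_n$ and of the truncations, these data satisfy Assumptions~\ref{ass_standing} and~\ref{ass_compact-bounded}. Hence Propositions~\ref{proposition: R compact-bounded}, \ref{proposition: J continuity compact} and~\ref{proposition: Rstar compact-bounded} and Lemma~\ref{lemma: compact invariant state set} hold for $\mathcal R_n$ and $\mathcal R_n^*$. The lemma provides a compact convex set $\mathcal Q_n\subset\mathcal P^p(\mathcal C^d)$ that contains $P\circ X^{-1}$ for every $P\in\mathcal R_n(\mu)$ and every $\mu$.

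The first step is to verify Proposition~\ref{proposition: Reta properties} with $\eta$ replaced by $\eta_n$. Write $\eta=\sum_k w_k\delta_{q^k}$ as a countable atomic law; this is where the countable support of $\eta$ is used. Then $\eta_n=\sum_k w_k\delta_{\Pi_nq^k}$, and each $\Pi_nq^k$ is a deterministic relaxed control on the compact set $A_n$. For each $k$, the construction of \cite[Proposition~IV.1]{karoui1990martingale} gives martingale measures with intensity $(\Pi_nq^k)_t(da)\,dt$. Solving the truncated state equation in the environment $\nu$ then gives $R^{k,\nu}\in\mathcal R_n(\nu)$ with control marginal $\delta_{\Pi_nq^k}$. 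The mixture $\sum_kw_kR^{k,\nu}$ belongs to $\mathcal R_n(\nu)$, because the defining constraints are affine and the control moment is bounded by $r_n^p T$ on $A_n$. Its control marginal is $\eta_n$, so $\mathcal R_{n,\eta_n}(\nu)\neq\emptyset$. Convexity, compactness and upper hemicontinuity then follow exactly as before, using closedness of the marginal constraint $R\circ\Lambda^{-1}=\eta_n$, which holds by continuity of the projection. Full support of $\eta_n$ plays no role here.

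The second step applies Kakutani--Fan--Glicksberg to
$$
\mathcal G^{j,n}(\nu):=\bigl\{(1-\varepsilon_j)Q\circ X^{-1}+\varepsilon_jR\circ X^{-1}:Q\in\mathcal R_n^*(\nu),\,R\in\mathcal R_{n,\eta_n}(\nu)\bigr\},\qquad \nu\in\mathcal Q_n,
$$
exactly as in Proposition~\ref{proposition: perturbed fixed point existence}. The values of $\mathcal G^{j,n}$ are nonempty, compact and convex, and they lie in $\mathcal Q_n$ by Lemma~\ref{lemma: compact invariant state set} and convexity of $\mathcal Q_n$. The correspondence has closed graph by relative compactness of $\operatorname{Ran}(\mathcal R_n)$ together with upper hemicontinuity of $\mathcal R_n^*$ and $\mathcal R_{n,\eta_n}$. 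On the compact set $\mathcal Q_n$ the Wasserstein and weak topologies coincide, so the theorem applies and yields a fixed point $\nu^{j,n}$ with corresponding laws $Q^{j,n}$ and $R^{j,n}$.

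Finally, set $P^{j,n}:=(1-\varepsilon_j)Q^{j,n}+\varepsilon_jR^{j,n}$. The fixed point identity gives $\mu^{P^{j,n}}=\nu^{j,n}$. Convexity of $\mathcal R_n(\nu^{j,n})$ gives $P^{j,n}\in\mathcal R_n(\mu^{P^{j,n}})$, and $Q^{j,n}\in\mathcal R_n^*(\mu^{P^{j,n}})$ holds by construction. The control marginal identity follows from linearity of $P\mapsto P\circ\Lambda^{-1}$ and $R^{j,n}\circ\Lambda^{-1}=\eta_n$.

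The only point needing care is the identification of $\mathcal V[A_n]$ and $\Omega[A_n]$ with subsets of $\mathcal V[A]$ and $\Omega[A]$. Since $A_n\subset A$ is closed, the embedding is isometric and closed, so the laws and topologies agree. I expect this bookkeeping, together with checking that the truncated coefficients keep the continuity in (A1), to be the main (mild) obstacle; everything else is a verbatim transfer of Section~\ref{section: compact-bounded existence}.
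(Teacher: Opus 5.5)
Your proposal is correct and follows the paper's own route. The paper likewise reruns the prescribed-marginal and perturbed fixed-point arguments of Section~\ref{section: compact-bounded existence} for the truncated data, with $\eta$ replaced by the countably supported law $\eta_n\in\mathcal P^p(\mathcal V[A_n])$, and reads off the remaining claims from the fixed-point identity, convexity of $\mathcal R_n(\nu^{j,n})$ and linearity of the control projection. You simply spell out details the paper leaves implicit: the atomic decomposition of $\eta_n$, the control-moment bound $T r_n^p$, and the isometric embedding of $\Omega[A_n]$ into $\Omega[A]$.
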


\begin{proof}
Apply the perturbed fixed point argument of Section~\ref{section: compact-bounded existence} to the truncated data and the prescribed marginal $\eta_n$. The fixed point identity gives $\mu^{P^{j,n}}=\nu^{j,n}$. The remaining claims follow from the convexity of $\mathcal{R}_n(\nu^{j,n})$ and the linearity of the relaxed control projection.  
\end{proof}

\begin{proposition}[Uniform estimates and compactness]
\label{proposition: uniform estimates and compactness}
There exists a constant $C>0$, independent of $j$ and $n$, such that
$$
\sup_{j\geq1,\ n\geq n_0} \mathbb{E}^{U^{j,n}} \left[ \|X\|_T^{p'} +
\int_0^T|\Lambda_t|^{p'}\,dt \right] \leq C
$$
for each $U^{j,n}\in \left\{P^{j,n},Q^{j,n},R^{j,n}\right\}$. 

Moreover, the family $\mathcal{K} := \left\{P^{j,n},Q^{j,n},R^{j,n}:j\geq1,\ n\geq n_0
\right\}$ is relatively compact in $\mathcal{P}^p(\Omega[A])$.
\end{proposition}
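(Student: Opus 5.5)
Our plan is to derive the uniform estimates from optimality of $Q^{j,n}$, comparing it with a fixed reference control, and then to obtain compactness from a truncation-uniform version of the tightness argument of Lemma~\ref{lemma: compact invariant state set}.

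\textbf{Step 1: bounds for $R^{j,n}$.} Since $R^{j,n}\circ\Lambda^{-1}=\eta_n$, the control moment $\mathbb E^{R^{j,n}}[\int_0^T|\Lambda_t|^{p'}dt]$ equals the $p'$-moment of $\eta_n$. By Proposition~\ref{proposition: eta-n approximation appendix} this is bounded uniformly in $n$, and it does not depend on $j$.

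\textbf{Step 2: bounds for $Q^{j,n}$.} Fix $a_0\in A_{n_0}$ and, for each $\nu$, let $S^{\nu}\in\mathcal R_n(\nu)$ be the law of the strict control $a_0$. This control is admissible for every $n\ge n_0$, because $b_n,\sigma_n$ satisfy (A2) with constants uniform in $n$. Optimality gives $J(\nu^{j,n},Q^{j,n})\ge J(\nu^{j,n},S^{\nu^{j,n}})$.

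Lemma~\ref{lemma: state estimate} with $\gamma=p$ and the growth bounds in (A3) bound the right-hand side from below by $-C(1+\|\nu^{j,n}\|_{T,p}^p)$. The coercive upper bound, combined with Lemma~\ref{lemma: state estimate} again, bounds the left-hand side from above by
$$
C\bigl(1+\|\nu^{j,n}\|_{T,p}^p+\mathbb E^{Q^{j,n}}\textstyle\int_0^T|\Lambda_t|^p dt\bigr)-c_3\mathbb E^{Q^{j,n}}\int_0^T|\Lambda_t|^{p'}dt.
$$
Since $p'>p$, Young's inequality absorbs the $p$-moment of the control into the $p'$-term. This yields
$$
\mathbb E^{Q^{j,n}}\Bigl[\int_0^T|\Lambda_t|^{p'}dt\Bigr]\le C\bigl(1+\|\nu^{j,n}\|_{T,p}^p\bigr).
$$

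\textbf{Step 3: closing the loop on $\nu^{j,n}$.} This is the main obstacle, since $\nu^{j,n}$ is itself generated by $Q^{j,n}$ and $R^{j,n}$. Because $\nu^{j,n}=\mu^{P^{j,n}}$ and $P^{j,n}\in\mathcal R_n(\nu^{j,n})$, the second part of Lemma~\ref{lemma: state estimate} applies with $\gamma=p$. Together with Steps 1 and 2, it gives
$$
\|\nu^{j,n}\|_{T,p}^p\le c_4\bigl(1+\mathbb E^{P^{j,n}}\textstyle\int_0^T|\Lambda_t|^p dt\bigr)\le C+C\,\bigl(\mathbb E^{Q^{j,n}}\int_0^T|\Lambda_t|^{p'}dt\bigr)^{p/p'}\le C+C(1+\|\nu^{j,n}\|_{T,p}^p)^{p/p'}.
$$
Since $p/p'<1$, this forces $\sup_{j,n}\|\nu^{j,n}\|_{T,p}<\infty$. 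The step to verify carefully is that all constants ($c_4$, $c_2$, $c_3$) are uniform in $n$; this holds because the truncations preserve (A2)–(A3) with the original constants.

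With Step 3, the control $p'$-moments of $Q^{j,n}$ are uniformly bounded, and hence so are those of $P^{j,n}$. Lemma~\ref{lemma: state estimate} with $\gamma=p'$ then bounds the state $p'$-moments of all three families. It needs $\|\nu^{j,n}\|_{T,p'}$, which in turn comes from the $\gamma=p'$ self-consistent version of the same estimate applied to $P^{j,n}$.

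\textbf{Step 4: compactness.} Relative compactness in $\mathcal P^p(\Omega[A])$ requires tightness plus uniform $p$-integrability. The uniform $p'$-bounds with $p'>p$ give uniform integrability of $d^p_{\Omega}$ from a base point. For tightness of the control marginals, the $p'$-moment bound makes $\{q:\int|a|^{p'}dq\le K\}$ relatively compact in $\mathcal V[A]$, and Markov's inequality applies. For the state marginals, we would use the Aldous criterion (or Kolmogorov-type increments) with the BDG inequality. The linear growth of $b$ and $p_\sigma$-growth of $\sigma\sigma^\top$, with $p_\sigma\le2$ and $p_\sigma\le p$, together with the uniform moments, give uniform control of $\mathbb E[|X_{\tau+\delta}-X_\tau|\wedge1]$. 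This is essentially \cite[Lemma~5.?]{lacker2015mean}'s argument, so I would cite Lacker's tightness step there.
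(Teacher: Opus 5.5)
Your proposal is correct and follows essentially the same route as the paper's proof in Propositions~\ref{prop: appendix uniform moment estimates} and~\ref{proposition: appendix relative compactness}: the $R^{j,n}$ bound comes from $\eta_n$; comparison of $Q^{j,n}$ with the constant control $a_0$, together with coercivity, gives $\mathbb E^{Q^{j,n}}[\int_0^T|\Lambda_t|^{p'}dt]\le C(1+\|\nu^{j,n}\|_{T,p}^p)$; the loop is then closed through the self-consistent state estimate for $P^{j,n}$, where you use the sublinear exponent $p/p'<1$ and the paper uses Young's inequality with a small $\delta$, which is equivalent; the order-$p'$ bounds follow, and compactness comes from Aldous' criterion and Lacker's Propositions~B.3--B.4. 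The one point to make explicit is that your absorption steps need a priori finiteness, which holds because $A_n$ is compact and $\nu^{j,n}\in\mathcal P^p(\mathcal C^d)$; also, the constant-control law is admissible for all $n\ge n_0$ because $a_0\in A_{n_0}\subset A_n$, while (A2) only makes the resulting constants uniform.
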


\begin{proof}
The prescribed marginal of $R^{j,n}$ and the uniform $p'$-moment bound for $\eta_n$ control the perturbing laws. Comparing the optimal law $Q^{j,n}$ with a constant-control deviation, the coercive bound in Assumption~\ref{ass_standing}\textnormal{(A3)} yields a uniform bound on the control moments of order $p'$ of $Q^{j,n}$. The state estimate of Lemma~\ref{lemma: state estimate}, the self-consistency of $P^{j,n}$, and Young's inequality then close the estimates for all three families. The details are given in Proposition~\ref{prop: appendix uniform moment estimates}; the argument adapts the proof of \cite[Lemma~5.1]{lacker2015mean} to the coupled laws $P^{j,n},Q^{j,n},R^{j,n}$. 

The uniform $p'$-moment bounds imply tightness of the relaxed control marginals. The uniform increment estimate for the state processes verifies Aldous' condition and therefore yields tightness of the state
marginals in $\mathcal{P}(\mathcal{C}^d)$ by
\cite[Theorem~16.10]{billingsley2009convergence}.
Since $p'>p$, the same moment bounds provide the uniform
$p$-integrability required by
\cite[Propositions~B.3-B.4]{lacker2015mean}. Consequently, $\mathcal{K}$ is relatively compact in $\mathcal{P}^p(\Omega[A])$.
See Proposition~\ref{proposition: appendix relative compactness}
for the details.  
\end{proof}

\subsection{Removing the truncation}
\label{subsection: fixed perturbation limit}

Fix $j\geq1$. By Proposition~\ref{proposition: uniform estimates and compactness}, there exist a sequence $(n_k)_k$, possibly depending on $j$, with $n_k\to\infty$, and laws $Q^j,R^j\in\mathcal{P}^p(\Omega[A])$ such that
$$
Q^{j,n_k}\rightarrow Q^j,
\qquad
R^{j,n_k}\rightarrow R^j
$$
in $\mathcal{P}^p(\Omega[A])$. Define
$$
P^j := (1-\varepsilon_j)Q^j + \varepsilon_jR^j,
\qquad
\nu^j:=\mu^{P^j}.
$$
Then
$$
P^{j,n_k}\rightarrow P^j,
\qquad
\nu^{j,n_k}=\mu^{P^{j,n_k}}\rightarrow\nu^j
$$
in the corresponding $p$-Wasserstein spaces.

\begin{proposition}[Limit at a fixed perturbation level]
\label{proposition: fixed perturbation limit}
For every $j\geq1$,
$$
P^j\in\mathcal{R}(\mu^{P^j}),
\qquad
Q^j\in\mathcal{R}^*(\mu^{P^j}),
$$
and
$$
R^j\circ\Lambda^{-1}=\eta.
$$
Consequently, $\operatorname{supp} \left(P^j\circ\Lambda^{-1}\right) = \mathcal{V}[A].$
\end{proposition}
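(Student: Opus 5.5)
The plan has four steps, in order: admissibility of $P^j$, the control marginal of $R^j$, optimality of $Q^j$, and then full support.

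\textbf{Admissibility of $P^j$.} I would pass to the limit $n_k\to\infty$ in $P^{j,n_k}\in\mathcal{R}_{n_k}(\nu^{j,n_k})$. This requires a stability result for the truncated martingale problems, namely: if $S^n\in\mathcal R_n(\mu^n)$, $S^n\to S$ and $\mu^n\to\mu$ in the $p$-Wasserstein topology, and the $p'$-moments of the states and controls are uniformly bounded, then $S\in\mathcal R(\mu)$.

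To prove it, I would fix $\varphi\in C_c^\infty$, $0\le s<t$, and a bounded continuous $\mathcal F_s$-measurable $h$. Under $S^n$, the expectation of $h\,(M^{n,\mu^n,\varphi}_t-M^{n,\mu^n,\varphi}_s)$ vanishes. Two facts let us pass to the limit.
\begin{itemize}
\item Since $|b|\le c_1(1+|x|+m_p(\mu)+|a|)$, we have $b_n=b$ whenever $c_1(1+|x|+m_p(\mu)+|a|)\le n$; the analogous statement holds for $\sigma_n$. Because $D\varphi$ and $D^2\varphi$ have compact support, the set where the truncation is active has vanishing expected contribution, thanks to the uniform $p'$-bounds of Proposition~\ref{proposition: uniform estimates and compactness}.
\item The untruncated integrand is continuous in $(q,x)$ with growth of order $|a|^{p_\sigma\vee1}\le|a|^{p}$, and uniform integrability comes from $p'>p$. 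The convergence $\mu^n_t\to\mu_t$ is uniform in $t$ by Lemma~\ref{lemma: time marginal continuity}.
\end{itemize}
The initial law passes to the limit by continuity. The moment condition in Definition~\ref{definition: admissible law}(2) follows by lower semicontinuity. By the monotone class argument, exactly as in Lemma~\ref{lemma: compact invariant state set}, the martingale property holds. Applying this with $S^n=P^{j,n_k}$ gives $P^j\in\mathcal R(\nu^j)$. Applying it with $S^n=Q^{j,n_k}$ and with $S^n=R^{j,n_k}$ gives $Q^j,R^j\in\mathcal R(\nu^j)$, since all three families share the environment $\nu^{j,n_k}$.

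\textbf{Control marginal of $R^j$.} By continuity of $P\mapsto P\circ\Lambda^{-1}$ we get $R^j\circ\Lambda^{-1}=\lim\eta_{n_k}=\eta$, using Proposition~\ref{proposition: eta-n approximation appendix}. Full support then follows as in the compact case: $(P^j\circ\Lambda^{-1})(O)\ge\varepsilon_j\eta(O)>0$ for every nonempty open $O$.

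\textbf{Optimality of $Q^j$.} This is the main obstacle, because the truncated optimality $Q^{j,n}\in\mathcal R_n^*$ only compares $Q^{j,n}$ with $A_n$-valued laws for the truncated dynamics. I would fix an arbitrary $S\in\mathcal R(\nu^j)$ with $J(\nu^j,S)>-\infty$; such an $S$ has finite $p'$-control moments by Lemma~\ref{lemma: finite value higher moments}.

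The key tool is the approximation of admissible deviations from the appendix, which I would invoke in the form of \cite[Section~5]{lacker2015mean}: there exist $S^k\in\mathcal R_{n_k}(\nu^{j,n_k})$ with $S^k\to S$ in $\mathcal P^p(\Omega[A])$ and $\liminf_k J(\nu^{j,n_k},S^k)\ge J(\nu^j,S)$. I would construct them by projecting the control of $S$ via $\pi_{n_k}$ and re-solving the truncated state equation, on the same martingale-measure space, in the environment $\nu^{j,n_k}$. Convergence of $S^k$ then comes from Lipschitz stability of the SDE, the uniform convergence of the environments, and the fact that $\pi_n(a)\ne a$ only on $\{|a|>r_n\}$. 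Convergence of the rewards comes from the continuity of $f,g$ and the $p'$-moments, which give uniform integrability of the $|x|^p$ and $m_p^p$ terms.

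Given this, truncated optimality yields $J(\nu^{j,n_k},Q^{j,n_k})\ge J(\nu^{j,n_k},S^k)$. Upper semicontinuity of $J$ (Proposition~\ref{proposition: J upper semicontinuity}) then gives
\[
J(\nu^j,Q^j)\ge\limsup_k J(\nu^{j,n_k},Q^{j,n_k})\ge\liminf_k J(\nu^{j,n_k},S^k)\ge J(\nu^j,S).
\]
Together with $Q^j\in\mathcal R(\nu^j)$ from the first step, this gives $Q^j\in\mathcal R^*(\mu^{P^j})$, since $\mu^{P^j}=\nu^j$.
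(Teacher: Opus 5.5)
Your proposal is correct and follows the paper's proof essentially step by step. It uses stability of the truncated martingale problems (Proposition~\ref{proposition: stability truncated app}), continuity of the control projection together with $\eta_{n_k}\to\eta$, and the $\pi_{n_k}$-projected deviation approximation (Proposition~\ref{proposition: truncated deviation approximation app}) combined with upper semicontinuity of $J$. The only differences are minor and harmless: you apply stability directly to $P^{j,n_k}$ instead of deducing $P^j\in\mathcal R(\nu^j)$ from $Q^j,R^j\in\mathcal R(\nu^j)$ by convexity, and you ask only for $\liminf_k J(\nu^{j,n_k},S^k)\ge J(\nu^j,S)$ rather than full convergence of the rewards, which is all that is needed.
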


\begin{proof}
Proposition~\ref{proposition: stability truncated app}, applied first to $Q^{j,n_k}$ and then to $R^{j,n_k}$, gives $Q^j,R^j\in\mathcal{R}(\nu^j)$.
Moreover,
$$
R^{j,n_k}\circ\Lambda^{-1}=\eta_{n_k}\rightarrow\eta,
$$
and continuity of the relaxed control projection yields $R^j\circ\Lambda^{-1}=\eta.$
By convexity of $\mathcal{R}(\nu^j)$,
$$
P^j = (1-\varepsilon_j)Q^j + \varepsilon_jR^j \in
\mathcal{R}(\nu^j) = \mathcal{R}(\mu^{P^j}).
$$
It remains to prove that $Q^j\in\mathcal R^*(\nu^j)$.
Let $S\in\mathcal{R}(\nu^j)$. There is nothing to prove when $J(\nu^j,S)=-\infty$. Otherwise, Proposition~\ref{proposition: truncated deviation approximation app} gives a sequence of admissible deviations $S^k\in\mathcal{R}_{n_k}(\nu^{j,n_k})$
such that $S^k\rightarrow S$ in $\mathcal{P}^p(\Omega[A])$ and $J(\nu^{j,n_k},S^k) \rightarrow J(\nu^j,S).$
Since $Q^{j,n_k}\in\mathcal{R}_{n_k}^*(\nu^{j,n_k})$,
$$
J(\nu^{j,n_k},Q^{j,n_k}) \geq J(\nu^{j,n_k},S^k).
$$
Proposition~\ref{proposition: J upper semicontinuity} gives
$$
\begin{aligned}
J(\nu^j,Q^j) \geq \limsup_{k\to\infty} J(\nu^{j,n_k},Q^{j,n_k}) \geq J(\nu^j,S).
\end{aligned}
$$
Thus $Q^j\in\mathcal{R}^*(\nu^j)$.
Finally, $P^j\circ\Lambda^{-1} = (1-\varepsilon_j)Q^j\circ\Lambda^{-1} + \varepsilon_j\eta$.
Hence, for every nonempty open set $O\subset\mathcal{V}[A]$, we have
$\bigl(P^j\circ\Lambda^{-1}\bigr)(O) \geq \varepsilon_j\eta(O)>0$,
which proves the full support property.  
\end{proof}

\subsection{Vanishing perturbations}
\label{subsection: vanishing perturbation general}

\begin{proof}[Proof of Theorem~\ref{thm: general thp}]
By lower semicontinuity, the estimates of Proposition~\ref{proposition: uniform estimates and compactness} pass to the limits obtained above. In particular,
$$
\sup_{j\geq1} \mathbb{E}^{S^j} \left[ \|X\|_T^{p'} +
\int_0^T|\Lambda_t|^{p'}\,dt\right] <\infty
$$
for $S^j\in\{P^j,Q^j,R^j\}.$
Moreover, $(Q^j)_{j\geq1}$ is relatively compact in
$\mathcal{P}^p(\Omega[A])$. Indeed, for each $j$, the law $Q^j$
is the $W_p$-limit of a sequence $(Q^{j,n_k})_k$ contained in the
relatively compact family of Proposition~\ref{proposition: uniform estimates and compactness}.
Hence $Q^j\in\overline{\mathcal K}$ for every $j\geq1,$
where $\overline{\mathcal K}$ is compact in $\mathcal{P}^p(\Omega[A])$.
Passing to a subsequence, not relabelled, there exists
$\widehat P\in\mathcal{P}^p(\Omega[A])$ such that $Q^j\rightarrow\widehat P$
in $\mathcal{P}^p(\Omega[A])$.
Since
$P^j = (1-\varepsilon_j)Q^j + \varepsilon_jR^j$,
the coupling which pairs the common $Q^j$ component identically gives
\begin{equation*}
W_{p,\Omega[A]}^p(P^j,Q^j)
\leq
\varepsilon_j
W_{p,\Omega[A]}^p(R^j,Q^j).
\end{equation*}
The uniform $p'$-moment estimates imply
$$
\sup_j W_{p,\Omega[A]}^p(R^j,Q^j) <\infty.
$$
Therefore, $P^j\rightarrow\widehat P$
in $\mathcal{P}^p(\Omega[A])$, and consequently $\mu^{P^j} \rightarrow \mu^{\widehat P}$ in $\mathcal{P}^p(\mathcal{C}^d)$.
Since $P^j\in\mathcal{R}(\mu^{P^j}),$
Proposition~\ref{proposition: stability varying environments app} yields
$\widehat P\in\mathcal{R}(\mu^{\widehat P})$.

Let $S\in\mathcal{R}(\mu^{\widehat P})$. Again, there is nothing to prove when $J(\mu^{\widehat P},S)=-\infty$. Otherwise, Proposition~\ref{proposition: deviation approximation varying environments app}, applied with
$\mu^j:=\mu^{P^j}$ and $\mu:=\mu^{\widehat P}$,
gives laws $S^j\in\mathcal{R}(\mu^{P^j})$ such that $S^j\rightarrow S$ in $\mathcal{P}^p(\Omega[A])$ and $J(\mu^{P^j},S^j) \rightarrow J(\mu^{\widehat P},S).$
For every $j$, $Q^j\in\mathcal{R}^*(\mu^{P^j})$,
and hence $J(\mu^{P^j},Q^j) \geq J(\mu^{P^j},S^j)$.
Proposition~\ref{proposition: J upper semicontinuity} gives
$$
\begin{aligned}
J(\mu^{\widehat P},\widehat P) \geq \limsup_{j\to\infty} J(\mu^{P^j},Q^j)\geq J(\mu^{\widehat P},S).
\end{aligned}
$$
Since $S$ was arbitrary, one has $\widehat P\in\mathcal{R}^*(\mu^{\widehat P})$.
Thus $\widehat P$ is a relaxed MFG equilibrium.

Finally,
$$
P^j\rightarrow\widehat P,
\qquad
Q^j\rightarrow\widehat P
$$
in $\mathcal{P}^p(\Omega[A])$, and Proposition~\ref{proposition: fixed perturbation limit} shows that, for every $j$,
$$
P^j\in\mathcal{R}(\mu^{P^j}),
\qquad
Q^j\in\mathcal{R}^*(\mu^{P^j}),
$$
while $P^j\circ\Lambda^{-1}$ has full support on $\mathcal{V}[A]$. Therefore the two sequences $(P^j)_{j\geq1}$ and $(Q^j)_{j\geq1}$ satisfy Definition~\ref{definition: thp}, and $\widehat P$ is a THP equilibrium.  
\end{proof}

\section{A selection example}
\label{section: selection example}
We conclude with a one-dimensional example showing that trembling-hand perfection can eliminate a relaxed MFG equilibrium. The state dynamics are linear, the interaction depends only on the terminal population mean, and the one-sided control set forces every admissible perturbation whose relaxed control marginal has full support to move this mean in the same direction.

Let $A=[0,1]$, $\lambda=\delta_0$, and let $\sigma>0$ be constant. 
The model coefficients are
$$
b(t,x,\nu,a):=a,
\qquad
\sigma(t,x,\nu,a):=\sigma,
\qquad
f(t,x,\nu,a):=0,
$$
and
$$
g(x,\nu) := x \cdot \int_{\mathbb{R}}y\,\nu(dy).
$$
Thus, for an external population law
$\mu\in\mathcal{P}^2(\mathcal{C}^1)$, the representative agent maximizes
$$
J(\mu,P) = \left(\int_{\mathbb{R}}y\,\mu_T(dy)\right)\mathbb{E}^P[X_T]
$$
over $P\in\mathcal{R}(\mu)$.

The model satisfies Assumptions~\ref{ass_standing} and~\ref{ass_compact-bounded}, for instance with $p=2$ and any $p'>2$.

For $q\in\mathcal{V}[A]$, define
$$
F(q) := \int_0^T\int_A a\,q_t(da)\,dt.
$$
The map $F:\mathcal{V}[A]\to[0,T]$ is continuous because convergence in $\mathcal{V}[A]$ implies weak convergence of the normalized measures $q/T$, and $(t,a)\mapsto a$ is bounded and continuous on $[0,T]\times A$.

Let $q^0(dt,da):=dt\,\delta_0(da)$, $q^1(dt,da):=dt\,\delta_1(da)$,
and define
$$
\widehat P^i := \operatorname{Law}(q^i,X^i),
\qquad
i\in\{0,1\},
$$
where $X_t^0=\sigma W_t$ and $X_t^1=t+\sigma W_t$.

\begin{proposition}[Selection]
\label{proposition: selection example}
The model has exactly two relaxed MFG equilibria, $\widehat P^0$ and $\widehat P^1$, and $\widehat P^1$ is its unique THP equilibrium.
\end{proposition}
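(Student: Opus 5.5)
The proof splits into three steps: a classification of the relaxed MFG equilibria, a construction showing $\widehat P^1$ is THP, and an argument excluding $\widehat P^0$.

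\textbf{Step 1: the equilibria.} For any $P\in\mathcal R(\mu)$, Proposition~\ref{proposition: martingale measure representation} gives
$$
X_T=\int_0^T\int_A a\,\Lambda_t(da)\,dt+\text{(martingale term)}.
$$
Since $\sigma$ is constant and bounded, the martingale term has zero mean, so $\mathbb E^P[X_T]=\mathbb E^P[F(\Lambda)]\in[0,T]$. Write $m(\mu):=\int y\,\mu_T(dy)$, so that $J(\mu,P)=m(\mu)\,\mathbb E^P[F(\Lambda)]$. The optimal laws then fall into three cases:
\begin{itemize}
\item if $m(\mu)>0$, then $\mathcal R^*(\mu)$ consists exactly of the laws with $F(\Lambda)=T$ a.s., i.e.\ $\Lambda=q^1$ a.s.;
\item if $m(\mu)<0$, it consists of the laws with $\Lambda=q^0$ a.s.;
\item if $m(\mu)=0$, every admissible law is optimal.
\end{itemize}
Now let $\widehat P$ be an equilibrium and set $m:=m(\mu^{\widehat P})=\mathbb E^{\widehat P}[F(\Lambda)]\ge0$.
\begin{itemize}
\item If $m>0$, then $\Lambda=q^1$ a.s. Pathwise uniqueness for the constant-coefficient state equation with deterministic control gives $\widehat P=\widehat P^1$, and indeed $m=T>0$ is consistent.
\item If $m=0$, then $F(\Lambda)=0$ a.s., which forces $\Lambda_t=\delta_0$ for a.e.\ $t$ a.s. Hence $\widehat P=\widehat P^0$, which is optimal because $m=0$.
\end{itemize}
The case $m<0$ cannot occur since $m\ge0$. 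This shows there are exactly two equilibria.

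\textbf{Step 2: $\widehat P^1$ is THP.} Take $\eta$ the atomic full-support law and $R^\varepsilon:=R^{\nu}$ as built in Proposition~\ref{proposition: Reta properties}. Because the drift does not depend on $\nu$, $R^\varepsilon=R$ is the same for every $\nu$. Set
$$
P^j:=(1-\varepsilon_j)\widehat P^1+\varepsilon_j R.
$$
Both components lie in the same $\mathcal R(\cdot)$, since admissibility does not depend on $\mu$ here, so $P^j\in\mathcal R(\mu^{P^j})$ by convexity. The control marginal of $P^j$ dominates $\varepsilon_j\eta$, hence has full support, and $P^j\to\widehat P^1$ by the mixture coupling. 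Moreover $m(\mu^{P^j})=\mathbb E^{P^j}[F]>0$, so $Q^j:=\widehat P^1\in\mathcal R^*(\mu^{P^j})$.

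\textbf{Step 3: $\widehat P^0$ is not THP.} This is the key step. Suppose $P^j,Q^j$ witness THP for $\widehat P^0$. Full support of $\eta^j:=P^j\circ\Lambda^{-1}$ gives
$$
\eta^j(\{q:F(q)>T/2\})>0,
$$
because this set is open by continuity of $F$ and nonempty since it contains $q^1$. As $F\ge0$, it follows that $m(\mu^{P^j})=\mathbb E^{P^j}[F(\Lambda)]>0$. Here admissibility is what links the mean of $X_T$ to the control via Step~1. By Step~1 this forces $Q^j$ to have $\Lambda=q^1$ a.s., hence $Q^j=\widehat P^1$. Then $Q^j\not\to\widehat P^0$, because the control marginals are $\delta_{q^1}\ne\delta_{q^0}$. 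This contradicts the definition of a THP equilibrium, so $\widehat P^0$ is not THP.

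The main care point is Step~1's identity $\mathbb E^P[X_T]=\mathbb E^P[F(\Lambda)]$ for arbitrary relaxed admissible laws, together with the uniqueness argument that identifies $\widehat P^1$ and $\widehat P^0$. I would handle the identity by taking $\varphi(x)=x$, truncated to lie in $C_c^\infty$, in the martingale problem and localizing with the bounded drift, then applying the martingale representation.
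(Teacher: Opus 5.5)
Your proposal is correct and follows the paper's proof essentially step by step. Both arguments rest on the mean identity $\mathbb{E}^P[X_T]=\mathbb{E}^P[F(\Lambda)]$, the resulting trichotomy for $\mathcal{R}^*(\mu)$, the open set $\{F>T/2\}$ to exclude $\widehat P^0$, and the mixture $(1-\varepsilon_j)\widehat P^1+\varepsilon_j R$ with $Q^j=\widehat P^1$. The only cosmetic difference is that you take $R$ from the atomic construction in Proposition~\ref{proposition: Reta properties}, whereas the paper builds $R$ directly from a full-support $\eta$ and an independent Brownian motion.
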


\begin{proof} \emph{Best responses.}
Fix $\mu\in\mathcal{P}^2(\mathcal{C}^1)$ and $P\in\mathcal{R}(\mu)$. By Proposition~\ref{proposition: martingale measure representation}, $P$ can be realized so that
$$
X_t = \int_0^t\int_A a\,\Lambda_s(da)\,ds + M_t,
$$
where $M$ is a square-integrable martingale with $M_0=0$. Hence $\mathbb{E}^P[X_T] = \mathbb{E}^P[F(\Lambda)]$,
and therefore, writing $\theta := \int_{\mathbb{R}}y\,\mu_T(dy)$,
we have $J(\mu,P) = \theta\mathbb{E}^P[F(\Lambda)]$.
Since $0\leq F(q)\leq T$ for every $q\in\mathcal{V}[A]$, it follows that
$$
\mathcal{R}^*(\mu) =
\begin{cases}
\{\widehat P^1\}, & \theta>0,\\
\mathcal{R}(\mu), & \theta=0,\\
\{\widehat P^0\}, & \theta<0.
\end{cases}
$$
Indeed, $F(q)=T$ implies $\int_0^T\int_A(1-a)\,q_t(da)\,dt=0$,
and hence $q=q^1$, while $F(q)=0$ similarly implies $q=q^0$. In each case, uniqueness of the corresponding state law identifies the unique optimal joint law.

\emph{Equilibria.}
Let $P$ be a relaxed MFG equilibrium and set
$\theta := \int_{\mathbb{R}}y\,\mu_T^P(dy)$.
The previous identity gives
$\theta = \mathbb{E}^P[X_T] = \mathbb{E}^P[F(\Lambda)] \geq0$.
If $\theta>0$, then $P=\widehat P^1$. If $\theta=0$, the
nonnegativity of $F$ implies $F(\Lambda)=0$, $P$-a.s., and therefore
$P=\widehat P^0$.

Conversely, $\widehat P^1$ is optimal in the environment it generates, whose terminal mean is $T$, while every admissible law is optimal in the zero-mean environment generated by $\widehat P^0$. Thus these are exactly the two relaxed MFG equilibria.

\emph{Failure of perfection of $\widehat P^0$.}
Suppose, to the contrary, that $\widehat P^0$ is a THP equilibrium. Then there exist sequences $(P^j)_{j\geq1}$ and $(Q^j)_{j\geq1}$ converging to $\widehat P^0$ in $\mathcal{P}^2(\Omega[A])$ such that, for every $j$,
$$
P^j\in\mathcal{R}(\mu^{P^j}),
\qquad
Q^j\in\mathcal{R}^*(\mu^{P^j}),
$$
and $P^j\circ\Lambda^{-1}$ has full support on $\mathcal{V}[A]$.

Consider the set $O := \left\{q\in\mathcal V[A]: F(q)>\frac{T}{2} \right\}$. This set is open by continuity of $F$, and it is nonempty since
$q^1\in O$. Writing $\rho^j:=P^j\circ\Lambda^{-1},$ the full-support condition gives $\rho^j(O)>0$. Since $P^j\in\mathcal R(\mu^{P^j})$, the preceding mean identity yields
$$
\begin{aligned}
\int_{\mathbb{R}} y\,\mu_T^{P^j}(dy) =
\mathbb{E}^{P^j}[F(\Lambda)]\geq \frac{T}{2}\rho^j(O) >0.
\end{aligned}
$$
It follows that $Q^j=\widehat P^1$ for every $j$, contradicting $Q^j\rightarrow\widehat P^0$. Hence $\widehat P^0$ is not a THP equilibrium.

\emph{Perfection of $\widehat P^1$.}
Let $\eta$ be a probability measure with full support on $\mathcal{V}[A]$, whose existence was noted in Subsection~\ref{subsection: full support perturbations}. On a probability space carrying an independent standard Brownian motion $W$ and a $\mathcal{V}[A]$-valued random variable $\Lambda$ with law $\eta$, define
$$
X_t := \int_0^t\int_A a\,\Lambda_s(da)\,ds + \sigma W_t, 
\qquad 
R:=\operatorname{Law}(\Lambda,X).
$$
Since the coefficients do not depend on the population law, we have $R\in\mathcal{R}(\mu)$ for every $\mu\in\mathcal{P}^2(\mathcal{C}^1)$.
Let $\varepsilon_j\downarrow0$ and define
$$
P^j := (1-\varepsilon_j)\widehat P^1 + \varepsilon_jR,
\qquad
Q^j:=\widehat P^1.
$$
Both $R$ and $\widehat P^1$ belong to $\mathcal{R}(\mu^{P^j})$, and convexity gives $P^j\in\mathcal{R}(\mu^{P^j})$.
Moreover, $P^j\circ\Lambda^{-1} = (1-\varepsilon_j)\delta_{q^1} + \varepsilon_j\eta$,
which has full support on $\mathcal{V}[A]$. Finally,
$$
\begin{aligned}
\int_{\mathbb{R}}y\,\mu_T^{P^j}(dy) =
(1-\varepsilon_j)T + \varepsilon_j\mathbb{E}^R[F(\Lambda)]\geq 
(1-\varepsilon_j)T >0.
\end{aligned}
$$
Thus, $Q^j=\widehat P^1 \in \mathcal{R}^*(\mu^{P^j})$.
Since $R,\widehat P^1\in\mathcal{P}^2(\Omega[A])$, the standard coupling gives
$$
W_{2,\Omega[A]}^2(P^j,\widehat P^1) \leq
\varepsilon_j W_{2,\Omega[A]}^2(R,\widehat P^1) \rightarrow0.
$$
Since $Q^j=\widehat P^1$ for every $j$, the two sequences satisfy
Definition~\ref{definition: thp}. Hence $\widehat P^1$ is a THP
equilibrium, and the preceding exclusion of $\widehat P^0$ proves
uniqueness.  
\end{proof}

\begin{remark}[Role of the control set]
\label{remark: selection control set}
The exclusion of $\widehat P^0$ relies on the one-sided control
constraint. Indeed, suppose instead that $A=[-1,1]$. Let
$(q^k)_{k\geq1}$ be a countable dense subset of
$\mathcal{V}[-1,1]$, and let $\widetilde q^k$ denote the relaxed
control obtained from $q^k$ by replacing each action $a$ with $-a$.
Then
$$
\eta := \sum_{k=1}^{\infty} 2^{-k-1}\left(\delta_{q^k} + \delta_{\widetilde q^k}\right)
$$
has full support on $\mathcal{V}[-1,1]$ and satisfies $\int_{\mathcal{V}[-1,1]}F(q)\,\eta(dq)=0.$
Now, construct $R$ from $\eta$ as in the proof above, and, for
$\varepsilon_j\downarrow0$, set
$$
P^j := (1-\varepsilon_j)\widehat P^0 + \varepsilon_jR,
\qquad
Q^j:=\widehat P^0.
$$
As before, $P^j$ is admissible and
$P^j\circ\Lambda^{-1}$ has full support. Moreover,
$\int_{\mathbb{R}}y\,\mu_T^{P^j}(dy) = 0$,
so every admissible law is optimal in the environment generated by
$P^j$, and in particular $Q^j\in\mathcal{R}^*(\mu^{P^j})$.
The standard mixture estimate gives $P^j\rightarrow\widehat P^0$, while
$Q^j=\widehat P^0$ for every $j$. Thus $\widehat P^0$ is a THP
equilibrium when $A=[-1,1]$. The selection mechanism therefore depends on the interaction between the full
support condition and the geometry of the control set.
\end{remark}

\section{Conclusion}
\label{section: conclusion}

We introduced a trembling-hand perfection refinement for stochastic mean field games formulated through relaxed controlled martingale problems. The refinement combines full-support perturbations with admissibility, ensuring that trembles represent feasible population behavior rather than exogenous perturbations of the control marginal. We proved existence under the general continuity, growth, and coercivity assumptions considered here, without requiring compact controls or bounded coefficients. The one-dimensional example further shows that the refinement can genuinely select among multiple relaxed MFG equilibria and that the resulting selection depends on the interaction between full support and the geometry of the control set.

The analysis is carried out at the mean field level and without common noise. Natural directions for further work include extensions to common-noise models, where conditional population laws would enter the formulation, and a finite-player foundation based on completely mixed $N$-player perturbations. The latter would in particular require understanding the interaction between the limits $N\to\infty$ and the vanishing-perturbation limit.

\newpage
\begin{appendix}

\section{Full-support perturbations}
\label{appendix: full-support perturbations}

\subsection{Support of the barycentric relaxed control}
\label{appendix: barycentric support}
\begin{proof}[Proof of Proposition~\ref{proposition: barycentric full support}]
Let $U\subset[0,T]\times A$ be nonempty and open. Choose a nonempty open
rectangle $I\times O\subset U$, fix $a_0\in O$, and let $q^0(dt,da)=dt\,\delta_{a_0}(da).$ Then
$$
q^0(U)\ge q^0(I\times O)=|I|>0.
$$
By the Portmanteau theorem, $q\mapsto q(U)$ is lower semicontinuous for
weak convergence and therefore for the stronger topology of $\mathcal V[A]$.
Thus $\mathcal O_U:=\{q:q(U)>q^0(U)/2\}$ is a nonempty open subset of $\mathcal V[A]$. Full support of $\eta$ gives
$\eta(\mathcal O_U)>0$, and
$$
\bar \eta(U)=\int q(U)\eta(dq) \ge \frac{q^0(U)}2\eta(\mathcal O_U)>0.
$$
Hence $\operatorname{supp}(\bar \eta)=[0,T]\times A$.

Fix a nonempty open set $O\subset A$ and suppose that $E:=\{t:\bar \eta_t(O)=0\}$ has positive Lebesgue measure. Then
$$
0=\bar \eta(E\times O)=\int q(E\times O)\eta(dq),
$$
so $q(E\times O)=0$ for $\eta$-almost every $q$. Choose $a_1\in O$, let $q^1(dt,da)=dt\,\delta_{a_1}(da),$ and choose a bounded continuous function $\chi:A\to[0,1]$ with $\chi(a_1)=1$ and $\operatorname{supp}(\chi)\subset O$. The map
$$
G(q):=\int_{[0,T]\times A}\chi(a)q(dt,da)
$$
is continuous on $\mathcal V[A]$. Every $q$ satisfying
$q(E\times O)=0$ satisfies $G(q)\le T-|E|$,
whereas $G(q^1)=T$. Therefore $\{q:G(q)>T-|E|\}$ is a nonempty open set of zero $\eta$-mass, contradicting full support.
Hence $\bar \eta_t(O)>0$ for almost every $t$.

Since $A$ is separable, take a countable base $(O_k)_{k\ge1}$ and remove
the union of the corresponding null sets. Outside one null set,
$\bar \eta_t(O_k)>0$ for every $k$, which is equivalent to
$\operatorname{supp}(\bar \eta_t)=A$.  
\end{proof}

\subsection{Construction and truncation of full-support perturbation laws}
\label{appendix: perturbation laws}
This subsection constructs the probability measure with full support and the truncated perturbation laws used in Subsection~\ref{subsection: truncated perturbed problems}. Throughout, $A$ is a nonempty closed subset of a finite-dimensional Euclidean space and $p'>p\geq1$.

For $r\geq1$ and $q\in\mathcal{V}[A]$, we use the notation
$$
|q_t|^r := \int_A|a|^r q_t(da).
$$

\begin{proposition}[Global perturbation law with full support]
\label{proposition: global full-support law appendix}
There exists $\eta\in\mathcal{P}^{p}(\mathcal{V}[A])$ such that $\operatorname{supp}(\eta)=\mathcal{V}[A]$
and
$$
\int_{\mathcal{V}[A]} \int_0^T|q_t|^{p'}\,dt\,\eta(dq)<\infty.
$$
\end{proposition}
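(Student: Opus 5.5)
The plan is to build $\eta$ as a countable atomic mixture, as in the compact case, but with weights chosen so that the $p'$-moments of the atoms are summable. The key observation is that relaxed controls with finite $p'$-moment are dense in $\mathcal V[A]$. Even better, controls with bounded action range are dense. For $q\in\mathcal V[A]$, the truncation $\Pi_nq$ (as in Subsection~\ref{subsection: truncated perturbed problems}, with a fixed $a_0\in A$) takes values in the compact set $A_n\cup\{a_0\}$. Moreover, $d_{\mathcal V[A]}(\Pi_nq,q)\to0$: couple $q/T$ with $\Pi_nq/T$ through the map $(t,a)\mapsto(t,\pi_n(a))$. The transport cost is then at most
\[
\frac1T\int_{\{|a|>r_n\}}|a-a_0|^p\,q(dt,da),
\]
which tends to $0$ by dominated convergence, since $\int|a|^p\,dq<\infty$.

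Next, I would note that $\mathcal V[A]$ is separable and pick a countable dense set $(\tilde q^k)_{k\ge1}$. By the previous step, each $\tilde q^k$ is the limit of the sequence $(\Pi_n\tilde q^k)_n$. So the countable family $\{\Pi_n\tilde q^k:k,n\}$ is still dense, and each of its members is supported in a bounded set of actions. Enumerate this family as $(q^k)_{k\ge1}$ and set
\[
c_k:=1+\int_0^T|q^k_t|^{p'}\,dt+d_{\mathcal V[A]}(q^k,q^0)^p<\infty ,
\]
where $q^0$ is a fixed reference control, say $dt\,\delta_{a_0}$. Each $c_k$ is finite because $q^k$ uses actions in a bounded set.

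Define $\eta:=\sum_k w_k\delta_{q^k}$ with $w_k:=2^{-k}c_k^{-1}/Z$, where $Z=\sum_k 2^{-k}c_k^{-1}\in(0,1]$ normalizes $\eta$ to a probability measure. Then:
\begin{itemize}
\item $\eta$ has full support, since every nonempty open set contains some $q^k$ and every weight $w_k$ is strictly positive;
\item $\eta$ has finite $p'$-moment,
\[
\int\!\int_0^T|q_t|^{p'}\,dt\,\eta(dq)\le \sum_k w_kc_k\le Z^{-1}\sum_k2^{-k}<\infty;
\]
\item by the same bound applied to the $p$-th power distance to $q^0$, $\eta\in\mathcal P^p(\mathcal V[A])$.
\end{itemize}

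The only genuinely non-routine step is the density claim, namely the convergence $\Pi_nq\to q$ in the $W_p$-based metric. This reduces to the explicit coupling above together with dominated convergence, using $|a-a_0|^p\le 2^{p-1}(|a|^p+|a_0|^p)$. I also need to check that $\Pi_nq$ is a valid relaxed control: its time marginal is still Lebesgue measure, and its $p$-moment is finite because $|\pi_n(a)|\le|a|$ once $|a_0|\le r_n$. This holds for all large $n$, so I will restrict to those $n$.
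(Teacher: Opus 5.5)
Your proposal is correct and follows essentially the same route as the paper: you show that bounded-action controls are dense, using the coupling that sends actions outside a ball to $a_0$ plus dominated convergence, and then take an atomic law on a countable dense family with weights $2^{-k}$ divided by the $p'$-moment of each atom. The differences are cosmetic. You build the countable dense family explicitly as $\{\Pi_n\tilde q^k\}$, whereas the paper just invokes separability, and you put $d_{\mathcal V[A]}(q^k,q^0)^p$ into the weights, whereas the paper deduces $\eta\in\mathcal P^p(\mathcal V[A])$ from the $p'$-moment bound via $\int_0^T|q_t|^p\,dt\le T+\int_0^T|q_t|^{p'}\,dt$.
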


\begin{proof}
Fix $a_0\in A$. For $q\in\mathcal{V}[A]$ and each integer
$\ell\geq |a_0|\vee1$, define $q^\ell(dt,da) := dt\,q_t^\ell(da)$,
where
$$
q_t^\ell := 
q_t\big|_{A\cap\bar B(0,\ell)} + q_t\bigl(A\setminus\bar B(0,\ell)\bigr)\delta_{a_0}.
$$
Since $|a_0|\leq\ell$, we have
$\int_0^T |q_t^\ell|^{p'}\,dt \leq T\ell^{p'} <\infty$.
The coupling that leaves $a$ unchanged when $|a|\leq\ell$ and
replaces it with $a_0$ otherwise gives
$$
d_{\mathcal{V}[A]}^p(q,q^\ell) \leq
\frac1T \int_0^T \int_{A\setminus\bar B(0,\ell)} |a-a_0|^p q_t(da)\,dt.
$$
The integrand converges pointwise to zero and is bounded by $2^{p-1}\bigl(|a|^p+|a_0|^p\bigr),$ which is integrable because $q\in\mathcal{V}[A]$. Hence $q^\ell\rightarrow q$ in $\mathcal{V}[A]$ as $\ell \to \infty$. Thus the relaxed controls with finite control
moment of order $p'$ are dense in $\mathcal{V}[A]$.

Since $\mathcal{V}[A]$ is separable, there exists a countable dense
family $(q^k)_{k\geq1}$ such that $\int_0^T |q_t^k|^{p'}\,dt<\infty$
for every $k\geq1$. Define $c>0$ by
$$
c^{-1} := \sum_{k=1}^{\infty} \frac{2^{-k}} {1+\int_0^T |q_t^k|^{p'}\,dt},
$$
and set
$$
\eta := c \sum_{k=1}^{\infty} \frac{2^{-k}} {1+\int_0^T |q_t^k|^{p'}\,dt} \delta_{q^k}.
$$
Every coefficient in this sum is strictly positive. Therefore, if
$O\subset\mathcal{V}[A]$ is nonempty and open, density of
$(q^k)_{k\geq1}$ gives an index $k$ such that $q^k\in O$, and hence
$\eta(O)>0$. Thus $\operatorname{supp}(\eta)=\mathcal{V}[A]$.
Moreover,
$$
\begin{aligned}
\int_{\mathcal{V}[A]} \left(\int_0^T |q_t|^{p'}\,dt\right)\eta(dq)=
c \sum_{k=1}^{\infty} 2^{-k}\frac{\int_0^T |q_t^k|^{p'}\,dt}{1+\int_0^T |q_t^k|^{p'}\,dt} \leq
c \sum_{k=1}^{\infty}2^{-k}<\infty.
\end{aligned}
$$
It remains to verify that
$\eta\in\mathcal{P}^{p}(\mathcal{V}[A])$. Taking the constant relaxed
control $dt\,\delta_{a_0}(da)$ as reference, the natural coupling gives
\begin{equation}
\label{eq:estimate d_V}
d_{\mathcal{V}[A]}^p
\bigl(q,dt\,\delta_{a_0}(da)\bigr)
\leq
\frac{2^{p-1}}{T}
\left(
\int_0^T |q_t|^p\,dt
+
T|a_0|^p
\right).
\end{equation}
Since $p'>p$, we have
$
\int_0^T |q_t|^p\,dt
\leq
T+\int_0^T |q_t|^{p'}\,dt
$.
Integrating the estimate \eqref{eq:estimate d_V} with respect to $\eta$ proves the
required finite $p$-Wasserstein moment.  
\end{proof}

Let $\pi_n$, $\Pi_n$, and $\eta_n$ be as in
Subsection~\ref{subsection: truncated perturbed problems}.

\begin{proposition}[Approximation by truncated perturbation laws]
\label{proposition: eta-n approximation appendix}
For every $n\geq n_0$, the measure $\eta_n$ belongs to $\mathcal{P}^{p}(\mathcal{V}[A_n])$. Under the natural embedding into $\mathcal{V}[A]$, we have $\eta_n\rightarrow\eta$ in $\mathcal{P}^{p}(\mathcal{V}[A])$, and
$$
\sup_{n\geq n_0} \int_{\mathcal{V}[A_n]} \int_0^T|q_t|^{p'}\,dt\,\eta_n(dq) <\infty.
$$
\end{proposition}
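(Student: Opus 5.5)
The plan is to obtain all three claims from a single pointwise estimate on $\mathcal V[A]$: for every $q\in\mathcal V[A]$, the truncation $\Pi_n q$ is dominated by $q$ in $p'$-moment and is uniformly close to $q$ in $d_{\mathcal V[A]}$ when $n$ is large. The integrated statements then follow by integrating against $\eta$ and applying dominated convergence.

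\emph{Step 1: moment bounds.} We recall that $|\pi_n(a)|\le|a|$ for every $a\in A$ once $n\ge n_0$, and that $\pi_n(A)\subset A_n$ because $a_0\in A_n$. Hence $\Pi_n q\in\mathcal V[A_n]$ and
$$
\int_0^T|(\Pi_nq)_t|^{r}\,dt=\int_0^T\int_A|\pi_n(a)|^r q_t(da)\,dt\le\int_0^T|q_t|^r\,dt
$$
for $r\in\{p,p'\}$. Integrating the case $r=p'$ against $\eta$ gives the uniform bound
$$
\sup_{n\ge n_0}\int_{\mathcal V[A_n]}\int_0^T|q_t|^{p'}\,dt\,\eta_n(dq)\le\int_{\mathcal V[A]}\int_0^T|q_t|^{p'}\,dt\,\eta(dq)<\infty
$$
from Proposition~\ref{proposition: global full-support law appendix}. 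To show $\eta_n\in\mathcal P^p(\mathcal V[A_n])$, we fix the reference control $dt\,\delta_{a_0}(da)\in\mathcal V[A_n]$ and reuse estimate~\eqref{eq:estimate d_V} with $\Pi_nq$ in place of $q$. Its right-hand side is bounded by the corresponding expression for $q$, which is $\eta$-integrable. Before this, we check that $\Pi_n$ is measurable as a map $\mathcal V[A]\to\mathcal V[A_n]$. It is the pushforward of $q$ under the measurable map $(t,a)\mapsto(t,\pi_n(a))$, and $q\mapsto\int h\,d(\Pi_nq)=\int h(t,\pi_n(a))\,q(dt,da)$ is measurable for bounded continuous $h$. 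This is enough, since the Borel $\sigma$-field of $\mathcal V$ is generated by such maps.

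\emph{Step 2: convergence.} We couple $\eta_n$ with $\eta$ through the map $q\mapsto(q,\Pi_nq)$. This gives
$$
W_p^p(\eta_n,\eta)\le\int d^p_{\mathcal V[A]}(\Pi_nq,q)\,\eta(dq).
$$
For the integrand, we couple $q/T$ with $(\Pi_nq)/T$ via $(t,a)\mapsto(t,\pi_n(a))$, exactly as in the proof of Proposition~\ref{proposition: global full-support law appendix}. This yields
$$
d^p_{\mathcal V[A]}(\Pi_nq,q)\le\frac1T\int_0^T\int_{A\setminus A_n}|a-a_0|^p\,q_t(da)\,dt\le\frac{2^{p-1}}{T}\int_0^T\int_A\mathbf 1_{\{|a|>r_n\}}\bigl(|a|^p+|a_0|^p\bigr)q_t(da)\,dt.
$$
For each fixed $q$, the right-hand side tends to $0$ by dominated convergence, since $r_n\to\infty$ and $|q_t|^p$ is $dt$-integrable. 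The integrand is dominated uniformly in $n$ by $\frac{2^{p-1}}{T}\bigl(\int_0^T|q_t|^p\,dt+T|a_0|^p\bigr)$, which is $\eta$-integrable by Step 1. A second dominated convergence argument, now in $\eta$, gives $W_p(\eta_n,\eta)\to0$.

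\emph{Main obstacle.} The only delicate point is the identification of $\mathcal V[A_n]$ with a subset of $\mathcal V[A]$. We must confirm that $d_{\mathcal V[A_n]}$ is the restriction of $d_{\mathcal V[A]}$, so that the Wasserstein statements in the two spaces agree. This holds because couplings of measures supported on $[0,T]\times A_n$ are supported on $([0,T]\times A_n)^2$, so the infimum defining the distance is unchanged. Once this is settled, the rest reduces to the two dominated convergence arguments above.
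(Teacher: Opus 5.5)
Your proof is correct and follows essentially the same route as the paper: you get the uniform $p'$-moment bound from $|\pi_n(a)|\le|a|$, and the convergence from the coupling $q\mapsto(q,\Pi_nq)$ followed by dominated convergence. The differences are cosmetic. The paper obtains $\eta_n\in\mathcal P^p(\mathcal V[A_n])$ directly from compactness of $\mathcal V[A_n]$ and applies dominated convergence once to the joint integral against $\eta(dq)\,q_t(da)\,dt$, whereas you use the moment estimate against the reference control $dt\,\delta_{a_0}(da)$, iterate dominated convergence, and spell out the isometric embedding of $\mathcal V[A_n]$ into $\mathcal V[A]$, which the paper takes for granted.
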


\begin{proof}
The map $\Pi_n$ is Borel measurable because it is induced by the
Borel map $(t,a)\mapsto(t,\pi_n(a)).$
Since $A_n$ is compact, so is $\mathcal{V}[A_n]$, and therefore $\eta_n\in\mathcal{P}^{p}(\mathcal{V}[A_n]).$ Moreover, $|\pi_n(a)|\leq|a|$ for every $a\in A$ and $n\geq n_0$.
Hence
$$
\begin{aligned}
\int_{\mathcal{V}[A_n]}\left( \int_0^T |q_t|^{p'}\,dt\right) \eta_n(dq)
&=\int_{\mathcal{V}[A]} \int_0^T\int_A |\pi_n(a)|^{p'}q_t(da)\,dt\,\eta(dq)\\
&\leq\int_{\mathcal{V}[A]}\left(\int_0^T |q_t|^{p'}\,dt\right)\eta(dq).
\end{aligned}
$$
This proves the uniform control moment bound of order $p'$.

For convergence, use the coupling of $\eta$ and $\eta_n$ induced by
$q\mapsto(q,\Pi_nq)$. Then
$$
W_{p,\mathcal{V}[A]}^p(\eta_n,\eta) \leq\frac1T \int_{\mathcal{V}[A]} 
\int_0^T \int_A|\pi_n(a)-a|^p q_t(da)\,dt\,\eta(dq).
$$
For every $a\in A$, $\pi_n(a)\rightarrow a$,
and $|\pi_n(a)-a|^p \leq 2^p|a|^p$.
Moreover,
$$
\begin{aligned}
\int_{\mathcal{V}[A]} \int_0^T |q_t|^p\,dt\,\eta(dq)
&\leq T+ \int_{\mathcal{V}[A]}\left(\int_0^T |q_t|^{p'}\,dt\right)\eta(dq) <\infty.
\end{aligned}
$$
Dominated convergence therefore yields
$W_{p,\mathcal{V}[A]}(\eta_n,\eta) \rightarrow0$.
 
\end{proof}

\section{Uniform estimates and compactness for the truncated problems}
\label{appendix: uniform estimates compactness}

This appendix proves the uniform estimates and relative compactness asserted in Proposition~\ref{proposition: uniform estimates and compactness}. The laws $P^{j,n}$, $Q^{j,n}$, and $R^{j,n}$ are those constructed in Proposition~\ref{proposition: truncated perturbed fixed points}. Throughout, Assumption~\ref{ass_standing} is in force, and all constants are independent of $j$ and $n$.

\begin{proposition}[Uniform moment estimates]
\label{prop: appendix uniform moment estimates}
There exists a constant $C>0$, independent of $j$ and $n$, such that
$$
\sup_{j\geq1,\ n\geq n_0} \mathbb{E}^{U^{j,n}} \left[ \|X\|_T^{p'} +
\int_0^T|\Lambda_t|^{p'}\,dt\right]\leq C
$$
for each $U^{j,n}\in \left\{P^{j,n},Q^{j,n},R^{j,n} \right\}$.
\end{proposition}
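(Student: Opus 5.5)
The plan is to derive all bounds from the state estimate of Lemma~\ref{lemma: state estimate}, the coercive bound in Assumption~\ref{ass_standing}\textnormal{(A3)}, and the fixed-point identity $\nu^{j,n}=\mu^{P^{j,n}}$. The resulting estimates are circular, and I close them by absorption.

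\textbf{Uniform constants.} First I check that the constants are uniform in $n$. The Euclidean projection onto a ball is $1$-Lipschitz and does not increase norms. Hence $b_n,\sigma_n$ satisfy \textnormal{(A2)} with the same $c_1$. Moreover $|\sigma_n\sigma_n^\top|\le|\sigma_n|^2\le|\sigma|^2$, and I expect the growth bound to transfer in the form used by \cite[Section~5]{lacker2015mean}. So the constant $c_4$ of Lemma~\ref{lemma: state estimate}, for $\gamma\in\{p,p'\}$, may be taken independent of $n$. Since $f,g$ are not truncated, the constants $c_2,c_3$ are unchanged.

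Throughout, write $\nu=\nu^{j,n}$, $Q=Q^{j,n}$, $R=R^{j,n}$, $P=P^{j,n}$ and $I_r(U):=\mathbb E^U[\int_0^T|\Lambda_t|^r\,dt]$. All these quantities are finite, since $A_n$ is compact.

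\textbf{Step 1 (perturbing law).} Since $R\circ\Lambda^{-1}=\eta_n$, Proposition~\ref{proposition: eta-n approximation appendix} gives $I_{p'}(R)\le K_\eta$ uniformly in $j,n$.

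\textbf{Step 2 (optimal law).} Let $S\in\mathcal R_n(\nu)$ be the law obtained from the constant control $a_0\in A_n$. By the lower bound in \textnormal{(A3)}, the growth of $g$, and Lemma~\ref{lemma: state estimate} with $\gamma=p$,
\[
J(\nu,S)\ge -C\bigl(1+\|\nu\|_{T,p}^p\bigr).
\]
For the optimal law, the coercive upper bound together with the state estimate gives
\[
J(\nu,Q)\le C\bigl(1+\|\nu\|_{T,p}^p+I_p(Q)\bigr)-c_3 I_{p'}(Q).
\]
Optimality gives $J(\nu,Q)\ge J(\nu,S)$. Young's inequality in the form $|a|^p\le\delta|a|^{p'}+C_\delta$ then lets me absorb $I_p(Q)$, so that
\[
I_{p'}(Q)\le C\bigl(1+\|\nu\|_{T,p}^p\bigr).
\]

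\textbf{Step 3 (self-consistency).} Since $P\circ X^{-1}=\nu$ and $P\in\mathcal R_n(\nu)$, the second part of Lemma~\ref{lemma: state estimate} with $\gamma=p$ gives
\[
\|\nu\|^p_{T,p}\le c_4\bigl(1+I_p(P)\bigr).
\]
By linearity, $I_p(P)=(1-\varepsilon_j)I_p(Q)+\varepsilon_jI_p(R)\le I_p(Q)+T+K_\eta$. Combining this with Step~2 yields
\[
I_{p'}(Q)\le C\bigl(1+I_p(Q)\bigr)\le C\bigl(1+\delta I_{p'}(Q)+C_\delta\bigr).
\]
Choosing $\delta$ small and using the finiteness of $I_{p'}(Q)$, I absorb the last term. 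This gives $\sup_{j,n}I_{p'}(Q^{j,n})<\infty$, hence also $\sup_{j,n}I_{p'}(P^{j,n})<\infty$.

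\textbf{Step 4 (state moments).} Apply the self-consistent part of Lemma~\ref{lemma: state estimate} with $\gamma=p'$ to $P$. This gives $\|\nu\|_{T,p'}^{p'}=\mathbb E^P\|X\|_T^{p'}\le c_4(1+I_{p'}(P))$, which is uniformly bounded. The first part of the lemma with $\gamma=p'$, applied to $Q,R\in\mathcal R_n(\nu)$, then bounds $\mathbb E^Q\|X\|_T^{p'}$ and $\mathbb E^R\|X\|_T^{p'}$ by $c_4(1+\|\nu\|^{p'}_{T,p'}+I_{p'})$. Together with the control bounds from Steps~1--3, this proves the proposition.

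\textbf{Main obstacle.} The delicate point is the circularity in Steps~2--3: the environment $\nu$ depends on $Q$ itself. The absorption only works because the coercivity exponent $p'$ strictly exceeds the growth exponent $p$, and because the $\varepsilon_j$-weighted perturbation contributes a bounded amount through $K_\eta$. I also need to verify carefully that $c_4$ is genuinely uniform in $n$ for the truncated coefficients.
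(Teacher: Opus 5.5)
Your proposal is correct and follows essentially the same route as the paper: a fixed constant-control comparison plus coercivity gives $I_{p'}(Q)\le C\bigl(1+\|\nu\|_{T,p}^p\bigr)$, the self-consistent state estimate for $P$ closes the loop, and the $p'$ state bounds then follow from Lemma~\ref{lemma: state estimate}. The only difference is where you absorb: the paper absorbs in $Y^{j,n}=\|\nu^{j,n}\|_{T,p}^p$, whereas you absorb in $I_{p'}(Q^{j,n})$. Your version is legitimate because $I_{p'}(Q^{j,n})$ is finite for the compact set $A_n$, and because $a_0$ is the fixed point lying in every $A_n$, $n\ge n_0$.
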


\begin{proof}
All constants below are independent of $j$ and $n$. Metric projection onto a closed Euclidean ball is $1$-Lipschitz and does not increase the norm. Hence,
the truncated coefficients satisfy the Lipschitz and growth estimates in Assumption~\ref{ass_standing} with constants independent of $j$ and $n$.

Set
$$
K^{j,n} := \mathbb E^{Q^{j,n}} \left[\int_0^T |\Lambda_t|^{p'}dt \right],
\qquad
Y^{j,n} := \mathbb E^{P^{j,n}}[\|X\|_T^p].
$$
The prescribed marginal of $R^{j,n}$ and
Proposition~\ref{proposition: eta-n approximation appendix} imply
\begin{equation}
\label{eq: R control bound revised}
\sup_{j,n}
\mathbb E^{R^{j,n}} \left [
\int_0^T |\Lambda_t|^{p'}dt \right]
<\infty.
\end{equation}
Fix $a_0\in A_{n_0}$ and let $\widetilde Q^{j,n}\in\mathcal R_n(\nu^{j,n})$ be generated by the constant control $a_0$. Since $\nu^{j,n}=\mu^{P^{j,n}}$, Lemma~\ref{lemma: state estimate},  applied to the truncated data at order $p$, gives
$\mathbb E^{\widetilde Q^{j,n}}[\|X\|_T^p] \le C\bigl(1+Y^{j,n}\bigr)$.
Using the lower reward bounds in Assumption~\ref{ass_standing}(A3)
and the fact that $a_0$ is fixed, we obtain
\begin{equation}
\label{eq: comparison lower revised}
J(\nu^{j,n},\widetilde Q^{j,n})
\ge
-C\bigl(1+Y^{j,n}\bigr).
\end{equation}
The optimality of $Q^{j,n}$, the coercive upper bound for $f$, and the growth
bound for $g$ yield, after comparison with
\eqref{eq: comparison lower revised},
\begin{equation}
\label{eq: coercive control revised}
K^{j,n}
\le
C\left(
1+Y^{j,n}
+\mathbb E^{Q^{j,n}}[\|X\|_T^p]
\right).
\end{equation}
Applying Lemma~\ref{lemma: state estimate} again at order $p$ to $Q^{j,n}\in\mathcal R_n(\nu^{j,n})$ gives
$$
\mathbb E^{Q^{j,n}}[\|X\|_T^p] \le C\left(1+Y^{j,n} +
\mathbb E^{Q^{j,n}} \left [\int_0^T |\Lambda_t|^pdt \right]\right).
$$
For every $\delta>0$, there exists $C_\delta>0$ such that $r^p\le C_\delta+\delta r^{p'},$ for $r\ge0.$
Hence
$$
\mathbb E^{Q^{j,n}}[\|X\|_T^p] \le C_\delta\bigl(1+Y^{j,n}\bigr) + C\delta K^{j,n}.
$$
Substituting this estimate into
\eqref{eq: coercive control revised} and choosing $\delta>0$ sufficiently
small to absorb the last term yields
\begin{equation}
\label{eq: U by Y revised}
K^{j,n}
\le
C\bigl(1+Y^{j,n}\bigr).
\end{equation}
We next close the state estimate at order $p$. Since
$P^{j,n}\in\mathcal R_n(\mu^{P^{j,n}})$, the self-consistent form of the
state estimate gives
$$
Y^{j,n} \le C\left(1+\mathbb E^{P^{j,n}} \left [\int_0^T |\Lambda_t|^pdt \right]\right).
$$
Using
$P^{j,n} = (1-\varepsilon_j)Q^{j,n} + \varepsilon_jR^{j,n}$,
Young's inequality, \eqref{eq: R control bound revised}, and
\eqref{eq: U by Y revised}, we obtain, for every $\delta>0$,
$$
\mathbb E^{P^{j,n}} \left [\int_0^T |\Lambda_t|^pdt \right] \le
C_\delta+C\delta Y^{j,n}.
$$
Consequently,
$Y^{j,n} \le C_\delta+C\delta Y^{j,n}$.
Choosing $\delta$ sufficiently small yields
\begin{equation}
\label{eq: uniform p state revised}
\sup_{j,n}Y^{j,n}<\infty.
\end{equation}
Combining \eqref{eq: U by Y revised} and
\eqref{eq: uniform p state revised} gives
$\sup_{j,n} \mathbb E^{Q^{j,n}}\left[\int_0^T |\Lambda_t|^{p'}dt\right] <\infty$.
The mixture identity and \eqref{eq: R control bound revised} then imply
$\sup_{j,n} \mathbb E^{P^{j,n}} \left[\int_0^T |\Lambda_t|^{p'}dt \right] <\infty $.
Thus the control moments of order $p'$ are uniformly bounded for all three
families.

Since $P^{j,n}\in\mathcal R_n(\mu^{P^{j,n}})$, the self-consistent estimate in Lemma~\ref{lemma: state estimate}, applied at order $p'$, gives
$\sup_{j,n} \mathbb E^{P^{j,n}}[\|X\|_T^{p'}] <\infty$.
Since $\nu^{j,n}=\mu^{P^{j,n}}$, it follows that
$$
\sup_{j,n} \|\nu^{j,n}\|_{T,p'}^{p'} =
\sup_{j,n} \mathbb E^{P^{j,n}}[\|X\|_T^{p'}] <\infty.
$$
Finally, applying the state estimate at order $p'$ to $Q^{j,n}$ and
$R^{j,n}$ in the common environment $\nu^{j,n}$, together with the uniform
$p'$-moment bounds for their controls, yields
$$
\sup_{j,n} \mathbb E^{U^{j,n}}[\|X\|_T^{p'}] <\infty,
\qquad
U^{j,n}\in\{Q^{j,n},R^{j,n}\}.
$$
This completes the proof.  
\end{proof}

\begin{proposition}[Relative compactness of the truncated laws]
\label{proposition: appendix relative compactness}
The family
$$
\mathcal{K} := \left\{P^{j,n},Q^{j,n},R^{j,n}:j\geq1,\ n\geq n_0\right\}
$$
is relatively compact in $\mathcal{P}^{p}(\Omega[A])$.
\end{proposition}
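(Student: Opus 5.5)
The plan is to prove relative compactness in two stages. First, show that $\mathcal K$ is tight in $\mathcal P(\Omega[A])$. Then upgrade to relative compactness in $\mathcal P^p(\Omega[A])$ by proving uniform $p$-integrability. The only quantitative input is Proposition~\ref{prop: appendix uniform moment estimates}, which supplies a constant $C$ bounding $\mathbb E^{U}[\|X\|_T^{p'}+\int_0^T|\Lambda_t|^{p'}dt]$ uniformly over $U\in\mathcal K$. Since $\Omega[A]=\mathcal V[A]\times\mathcal C^d$ carries the product metric, tightness of $\mathcal K$ reduces to tightness of the two marginal families $\{U\circ\Lambda^{-1}\}$ and $\{U\circ X^{-1}\}$.

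\textbf{Control marginals.} I would invoke the compactness criterion in \cite[Propositions~B.3--B.4]{lacker2015mean}. For every $M>0$, the set
$$
\Bigl\{q\in\mathcal V[A]:\int_0^T\!\!\int_A|a|^{p'}q_t(da)\,dt\le M\Bigr\}
$$
is relatively compact in $\mathcal V[A]$. Indeed, the normalized measures $q/T$ are tight on $[0,T]\times A$ because $A$ is closed and $\bar B(0,r)\cap A$ is compact, and they have uniformly integrable $p$-th moments because $p'>p$. The set is also closed, by lower semicontinuity of the moment functional. Markov's inequality then gives $U(\int_0^T|\Lambda_t|^{p'}dt>M)\le C/M$ uniformly, which yields tightness of the control marginals.

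\textbf{State marginals.} I would use Aldous' criterion \cite[Theorem~16.10]{billingsley2009convergence}. Tightness of $X_0$ is immediate because $X_0\sim\lambda$. For the increment condition, I would use the representation of Proposition~\ref{proposition: martingale measure representation} in the environment $\nu^{j,n}$. For stopping times $\tau$ and $\delta\le h$, I would bound $\mathbb E|X_{(\tau+\delta)\wedge T}-X_\tau|$ in two parts:
\begin{itemize}
\item the drift part by $\mathbb E\int_\tau^{\tau+\delta}c_1(1+|X_s|+m_p(\nu^{j,n}_s)+|\Lambda_s|)\,ds$;
\item the martingale part, via BDG, by $\mathbb E\bigl(\int_\tau^{\tau+\delta}c_1(1+|X_s|^{p_\sigma}+m_p^{p_\sigma}+|\Lambda_s|^{p_\sigma})ds\bigr)^{1/2}$.
\end{itemize}
Using H\"older's inequality in time together with the uniform $p'$-moments of $\|X\|_T$, of $\nu^{j,n}$ (which equals $\mu^{P^{j,n}}$) and of the controls, both terms are $O(h^{\kappa})$ for some $\kappa>0$, uniformly in $j,n$. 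Here $p_\sigma\le 2\le$ and $p_\sigma\le p<p'$ ensure the integrability needed. The truncation does not affect these bounds, since $b_n,\sigma_n$ satisfy (A2) with the same constants.

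\textbf{Passing to $\mathcal P^p$.} Tightness gives weak relative compactness. Since $d_{\Omega[A]}^p((q,x),(q^0,0))\lesssim 1+\int_0^T|q_t|^p dt+\|x\|_T^p$, where $q^0=dt\,\delta_{a_0}$, the uniform $p'$-bounds make $d^p$ uniformly integrable over $\mathcal K$. Weak convergence plus uniform integrability of $p$-th moments is equivalent to $W_p$ convergence, so $\mathcal K$ is relatively compact in $\mathcal P^p(\Omega[A])$.

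I expect the main obstacle to be the Aldous increment estimate. There the control moment enters only in time-integrated form, so H\"older must be applied carefully. The bound for the drift part is $\mathbb E\int_\tau^{\tau+\delta}|\Lambda_s|ds\le \delta^{1-1/p'}\,(\mathbb E\int_0^T|\Lambda_s|^{p'}ds)^{1/p'}$, and the stochastic-integral part is handled analogously with exponent $p_\sigma/2\le1$.
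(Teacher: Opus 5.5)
Your proposal is correct and follows essentially the same route as the paper. Both use the uniform $p'$-moments of Proposition~\ref{prop: appendix uniform moment estimates}, Aldous' criterion for the state marginals via BDG and H\"older increment bounds (your first-moment increment bound suffices where the paper bounds the $p$-th moment), and the upgrade to $\mathcal P^p(\Omega[A])$ through the uniform $p$-integrability coming from $p'>p$, with the only difference being that the paper delegates control-marginal tightness and the Wasserstein upgrade to \cite[Propositions~B.3--B.4]{lacker2015mean} while you carry these steps out by hand.
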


\begin{proof}
Let $U^{j,n}$ denote any of the three families
$P^{j,n}$, $Q^{j,n}$, and $R^{j,n}$. Proposition~\ref{prop: appendix uniform moment estimates} gives uniform control and state moments of order $p'$.

We first prove tightness of the state marginals. Since
$\nu^{j,n}=\mu^{P^{j,n}}$, the uniform state moment bound yields
$$ 
\sup_{j,n}\sup_{t\le T} m_p^p(\nu_t^{j,n}) \le 
\sup_{j,n} \mathbb E^{P^{j,n}}[\|X\|_T^p] <\infty. 
$$
For each $j,n$, regard the external environment as part of the coefficients
by setting
$$
b^{j,n}(t,x,a) := b_n(t,x,\nu_t^{j,n},a),
$$
and defining $\sigma^{j,n}$ analogously. By
Assumption~\ref{ass_standing}(A2), the fact that the metric projections
defining $b_n$ and $\sigma_n$ do not increase the norm, and the uniform
bound on the moments of $\nu^{j,n}$ above, these coefficients satisfy the
growth bounds required in \cite[Proposition~B.4]{lacker2015mean}, with
constants independent of $j$ and $n$.

Accordingly, the argument of \cite[Proposition~B.4]{lacker2015mean} applies uniformly to the present
family. In particular, the conditional Burkholder--Davis--Gundy inequality and H\"older's inequality yield
$$
\lim_{\delta\downarrow0} \sup_{j,n} \sup_{\tau}
\mathbb E^{U^{j,n}}\left[ \left|X_{(\tau+\delta)\wedge T}-X_\tau\right|^p \right] =0,
$$
where the supremum is over all stopping times $\tau$ taking values in $[0,T]$.
Together with the uniform $p'$-moment bound for $\|X\|_T$, which gives
compact containment, Aldous' criterion yields tightness of $\left\{U^{j,n}\circ X^{-1}: j\ge1,\ n\ge n_0 \right\}$ in $\mathcal P(\mathcal C^d)$.

Having established tightness of the state marginals, the uniform
$p'$-moment bounds for the state and control coordinates, together with
$p'>p$, allow us to apply
\cite[Proposition~B.3]{lacker2015mean}. It follows that $\mathcal K := \left\{P^{j,n},Q^{j,n},R^{j,n}: j\ge1,\ n\ge n_0\right\}$ is relatively compact in $\mathcal P^p(\Omega[A])$.  
\end{proof}

\section{Stability under truncation and in varying environments}
\label{appendix: stability}

This appendix proves the two stability results used in Proposition~\ref{proposition: fixed perturbation limit} and in the proof of Theorem~\ref{thm: general thp}. The first removes the truncation at a fixed perturbation level. The second treats convergence after the original coefficients and control space have been restored.

We retain the notation $\mathcal{L}^{\mu,a}_t$ and $M^{\mu,\varphi}_t$
introduced in Subsection~\ref{subsection: martingale problem}.
For $n\geq n_0$ and $\varphi\in C_c^\infty(\mathbb{R}^d)$, define the truncated generator by
$$
\mathcal{L}_t^{n,\mu,a}\varphi(x) :=
b_n(t,x,\mu_t,a)^\top D\varphi(x) + \frac12 \operatorname{Tr}\left[\sigma_n\sigma_n^\top(t,x,\mu_t,a)D^2\varphi(x)\right].
$$
For $(q,x)\in\Omega[A]$, set
$$
M_t^{n,\mu,\varphi}(q,x) := \varphi(x_t) - \int_0^t\int_A \mathcal{L}_s^{n,\mu,a}\varphi(x_s) q_s(da)\,ds.
$$

\begin{proposition}[Stability of truncated admissible laws]
\label{proposition: stability truncated app}
Let $n_k\to\infty$, and assume $\mu^k\rightarrow\mu$
in $\mathcal{P}^{p}(\mathcal{C}^d),$ $S^k\rightarrow S$ in $\mathcal{P}^{p}(\Omega[A])$.
Suppose that $S^k\in\mathcal{R}_{n_k}(\mu^k)$
for every $k$, and
$$
\sup_k \left(\|\mu^k\|_{T,p'}^{p'} + \mathbb{E}^{S^k} \left[\|X\|_T^{p'} + \int_0^T|\Lambda_t|^{p'}\,dt\right]\right)<\infty.
$$
Then $S\in\mathcal{R}(\mu)$.
\end{proposition}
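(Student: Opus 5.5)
We must verify the three conditions of Definition~\ref{definition: admissible law} for $S$ with environment $\mu$.

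\emph{Initial law and moment condition.} The initial law passes to the limit because $x\mapsto x_0$ is continuous and $S^k\circ X_0^{-1}=\lambda$ for every $k$. For condition (2), the map $(q,x)\mapsto\int_0^T|q_t|^{p'}dt$ is lower semicontinuous on $\Omega[A]$, since it is a supremum of bounded continuous truncations $\int (|a|^{p'}\wedge L)\,q(dt,da)$. Hence
$$
\mathbb E^S\left[\int_0^T|\Lambda_t|^{p'}dt\right]\le\liminf_k \mathbb E^{S^k}\left[\int_0^T|\Lambda_t|^{p'}dt\right]<\infty,
$$
and the $p$-moment is finite as well. The same argument gives $\mathbb E^S[\|X\|_T^{p'}]<\infty$.

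\emph{Martingale property.} Fix $\varphi\in C_c^\infty(\mathbb R^d)$, times $0\le s<t\le T$, and a bounded continuous $\mathcal F_s$-measurable $h$ on $\Omega[A]$, taken for instance as a function of finitely many $x_{r_i}$ and $\int\psi_i\,dq$ with $\psi_i$ supported in $[0,s]\times A$. The goal is
$$
\mathbb E^S\bigl[h\,(M^{\mu,\varphi}_t-M^{\mu,\varphi}_s)\bigr]=0,
$$
knowing that $\mathbb E^{S^k}\bigl[h\,(M^{n_k,\mu^k,\varphi}_t-M^{n_k,\mu^k,\varphi}_s)\bigr]=0$. A monotone-class argument then extends the identity to all bounded $\mathcal F_s$-measurable $h$. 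I will write $\Psi_k:=h\,(M^{n_k,\mu^k,\varphi}_t-M^{n_k,\mu^k,\varphi}_s)$ and $\Psi:=h\,(M^{\mu,\varphi}_t-M^{\mu,\varphi}_s)$ and split
$$
\mathbb E^{S^k}\Psi_k-\mathbb E^S\Psi=\mathbb E^{S^k}[\Psi_k-\Psi]+\bigl(\mathbb E^{S^k}\Psi-\mathbb E^S\Psi\bigr).
$$

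\emph{Second term.} This term tends to zero by \cite[Lemma~4.4]{lacker2015mean}, or directly: $\Psi$ is continuous on $\Omega[A]$. Continuity follows from (A1) and Lemma~\ref{lemma: time marginal continuity} applied to the fixed $\mu$, together with the standard argument for relaxed-control integrals of functions continuous in $a$ and measurable in $t$. Moreover, $|\Psi|\le C(1+\|X\|_T^{2}+\int|\Lambda_t|^{2}dt)$ by (A2) with $p_\sigma\le 2$, and more precisely $|\Psi|\le C(1+\|X\|_T^{p}+\int_0^T|\Lambda_t|^{p}dt)$ because $D\varphi$ and $D^2\varphi$ have compact support. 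Indeed, on the support of $\varphi$ the $x$-growth is bounded, and $p\ge1\vee p_\sigma$ controls the $a$-growth. Wasserstein-$p$ convergence $S^k\to S$ then gives $\mathbb E^{S^k}\Psi\to\mathbb E^S\Psi$.

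\emph{First term (main obstacle).} This term is bounded by
$$
C\,\mathbb E^{S^k}\int_0^T\int_A \mathbf 1_{\{x_r\in\operatorname{supp}\varphi\}}\Bigl(|b_{n_k}(r,x_r,\mu^k_r,a)-b(r,x_r,\mu_r,a)|+|\sigma_{n_k}\sigma_{n_k}^\top(r,x_r,\mu^k_r,a)-\sigma\sigma^\top(r,x_r,\mu_r,a)|\Bigr)\Lambda_r(da)\,dr.
$$
I split each difference into the truncation error at $\mu^k$, namely $b_{n_k}-b$ evaluated at $\mu^k$, and the environment error $b(\cdot,\mu^k_r,\cdot)-b(\cdot,\mu_r,\cdot)$.

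The truncation error vanishes unless $|b|>n_k$. Using the growth bound with $x$ in a compact set and $\sup_k m_p(\mu^k_r)<\infty$, this forces $|a|\ge c\,n_k$ for large $k$. The error is therefore at most $C(1+|a|)\mathbf 1_{\{|a|\ge cn_k\}}$ (respectively $C(1+|a|^{p_\sigma})\mathbf 1_{\{|a|\ge cn_k\}}$), whose $S^k$-expectation is at most $C n_k^{-(p'-p)}\sup_k\mathbb E^{S^k}\int|\Lambda_t|^{p'}dt\to0$.

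For the environment error, I localize: with $|a|\le L$ and $x$ in the compact support of $\varphi$, the functions are uniformly continuous in $(x,\nu,a)$ over a compact set containing $\{\mu^k_r,\mu_r\}$. This set is compact in $\mathcal P^p$ by Lemma~\ref{lemma: time marginal continuity} and the uniform $p'$-moments. Since $\sup_r W_p(\mu^k_r,\mu_r)\to0$, the integrand tends to zero uniformly in $r$ for each fixed $L$. The part where $|a|>L$ is bounded by $CL^{-(p'-p)}$ uniformly in $k$ by the $p'$-moment bound. Letting $k\to\infty$ and then $L\to\infty$ concludes the argument.

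The delicate point is this uniformity in $r$, given that $b$ is only measurable in $t$. If equicontinuity in $(x,\nu,a)$ uniform over $t$ is unavailable from (A1), I would instead argue by dominated convergence in $r$ for each fixed $(r,x,a)$-slice after conditioning. Concretely, I would rewrite the expectation as an integral against the mean measure $\int S^k(dq,dx)\,q_r(da)\,dr$ and use tightness of these measures on $[0,T]\times\mathbb R^d\times A$, together with pointwise convergence of the integrand in $\nu$.
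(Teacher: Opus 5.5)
Your proof is correct and follows the paper's skeleton. You pass to the limit in the initial law and the moments, and you test martingale increments against bounded continuous $\mathcal F_s$-measurable functions. You kill the truncation error with the uniform $p'$-moment bound, which is what the paper outsources to \cite[(5.8)--(5.10)]{lacker2015mean}, and you finish with a monotone-class extension.

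The one structural difference is where the change of environment is handled. The paper compares the truncated functional at $\mu^k$ with the untruncated functional at the \emph{same} environment $\mu^k$, so only a truncation error is estimated by hand. It then moves from $(\mu^k,S^k)$ to $(\mu,S)$ in one step through the joint continuity argument of \cite[Lemma~5.2]{lacker2015mean}. You compare directly with the untruncated functional in the limit environment $\mu$. Your last step then needs only continuity and $p$-growth of one fixed functional $\Psi$ on $\Omega[A]$, but the environment error must be estimated explicitly. This makes your route more self-contained.

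Its price is exactly the point you hesitate on. It is easily settled without uniformity in $r$ and without tightness of mean measures; the latter, combined only with pointwise convergence of integrands that vary with $k$, would not close the argument anyway. For fixed $r$ and $L$, let $\omega_k^L(r)$ be the supremum of $|b(r,x,\mu^k_r,a)-b(r,x,\mu_r,a)|+|\sigma\sigma^\top(r,x,\mu^k_r,a)-\sigma\sigma^\top(r,x,\mu_r,a)|$ over $x\in\operatorname{supp}\varphi$ and $a\in A\cap\bar B(0,L)$. This function has three properties:
\begin{itemize}
\item it is measurable in $r$, since the supremum may be taken over a countable dense set;
\item it is bounded by a constant $C_L$, thanks to (A2) and $\sup_{k,r}m_p(\mu^k_r)<\infty$;
\item it tends to $0$ for every $r$, by uniform continuity of $b(r,\cdot)$ and $\sigma\sigma^\top(r,\cdot)$ on the compact set $\operatorname{supp}\varphi\times\{\mu_r,\mu_r^k:k\geq1\}\times(A\cap\bar B(0,L))$.
\end{itemize}
Every relaxed control has Lebesgue time marginal and each $\Lambda_r$ is a probability measure. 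Hence the $\{|a|\le L\}$ part of your environment error is at most $C\int_0^T\omega_k^L(r)\,dr$, which vanishes by dominated convergence in $r$. Your $|a|>L$ tail bound then finishes the step.
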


\begin{proof}
The initial law passes to the limit because $(q,x)\mapsto x_0$ is
continuous. Moreover, the map $q\mapsto \int_0^T\int_A |a|^p q_t(da)\,dt$ is lower semicontinuous, so the control moment condition also passes to
the limit.

Fix $0\le r<t\le T$, $\varphi\in C_c^\infty(\mathbb R^d)$, and a bounded
continuous $\mathcal F_r$-measurable function $H$. Set
$$
\Delta_k:=\mathbb E^{S^k}\left[H\bigl(M_t^{\mu^k,\varphi}-M_r^{\mu^k,\varphi}\bigr)
\right].
$$
Since $S^k\in\mathcal R_{n_k}(\mu^k)$,
$$
\mathbb E^{S^k}\left[H\bigl(M_t^{n_k,\mu^k,\varphi} -M_r^{n_k,\mu^k,\varphi}\bigr)\right]=0.
$$
Hence
$$
|\Delta_k|\le \|H\|_\infty \mathbb E^{S^k} \left [ \int_r^t\int_A \left|\mathcal L_s^{n_k,\mu^k,a}\varphi(X_s) - \mathcal L_s^{\mu^k,a}\varphi(X_s) \right| \Lambda_s(da)\,ds \right ].
$$
The truncated coefficients agree with the original ones whenever the
latter remain inside the truncation balls. Assumption~\ref{ass_standing}(A2), the uniform $p'$-moment bound, and $p'>p\ge 1\vee p_\sigma$ imply, exactly as in
\cite[(5.8)-(5.10)]{lacker2015mean}, that the right-hand side converges
to zero. In particular, the diffusion contribution involves
$\sigma\sigma^\top$, whose growth order is $p_\sigma$.

It remains to pass to the limit in the untruncated martingale
functional. By Lemma~\ref{lemma: time marginal continuity}, we have $\sup_{s\in[0,T]}W_p(\mu_s^k,\mu_s)\rightarrow0$.
The continuity argument used in
\cite[Lemma~5.2]{lacker2015mean} for the untruncated generator therefore
gives
$$
\Delta_k\rightarrow \mathbb E^S\left[H\bigl(M_t^{\mu,\varphi}-M_r^{\mu,\varphi}\bigr)\right].
$$
Since $\Delta_k\to0$, the latter expectation is zero. A monotone-class
argument extends the identity to every bounded
$\mathcal F_r$-measurable test function. Hence
$M^{\mu,\varphi}$ is an $S$-martingale for every $\varphi$, and thus
$S\in\mathcal R(\mu)$.  
\end{proof}

\begin{proposition}[Stability under varying environments]
\label{proposition: stability varying environments app}
Let $\mu^j\rightarrow\mu$ in $\mathcal{P}^{p}(\mathcal{C}^d),$ $S^j\rightarrow S$ in $\mathcal{P}^{p}(\Omega[A])$.
Suppose that $S^j\in\mathcal{R}(\mu^j)$
for every $j$, and
$$
\sup_j\mathbb{E}^{S^j}\left[\|X\|_T^{p'}+\int_0^T|\Lambda_t|^{p'}\,dt\right]<\infty.
$$
Then $S\in\mathcal{R}(\mu)$.
\end{proposition}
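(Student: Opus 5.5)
The plan is to follow the proof of Proposition~\ref{proposition: stability truncated app}, but without the truncation step, so that only the passage to the limit in the untruncated martingale functional remains.

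\emph{Step 1: the first two conditions of Definition~\ref{definition: admissible law}.} Since $(q,x)\mapsto x_0$ is continuous, $S^j\circ X_0^{-1}=\lambda$ passes to the limit and gives $S\circ X_0^{-1}=\lambda$. The map $q\mapsto\int_0^T\int_A|a|^{p'}q_t(da)\,dt$ is lower semicontinuous on $\mathcal V[A]$. Together with the uniform bound, this gives $\mathbb E^S[\int_0^T|\Lambda_t|^{p'}dt]<\infty$, hence also the order-$p$ bound.

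\emph{Step 2: the martingale property.} Fix $0\le r<t\le T$, $\varphi\in C_c^\infty(\mathbb R^d)$, and a bounded continuous $\mathcal F_r$-measurable $H$. Since $S^j\in\mathcal R(\mu^j)$,
$$
\mathbb E^{S^j}\bigl[H(M^{\mu^j,\varphi}_t-M^{\mu^j,\varphi}_r)\bigr]=0 .
$$
We show that the left side converges to $\mathbb E^S[H(M^{\mu,\varphi}_t-M^{\mu,\varphi}_r)]$, and split the difference into two terms.

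\begin{itemize}
\item[(a)] The first term is $\mathbb E^{S^j}[H\,|\Phi^{\mu^j}-\Phi^{\mu}|]$, where $\Phi^\nu(q,x):=\varphi(x_t)-\varphi(x_r)-\int_r^t\int_A\mathcal L^{\nu,a}_s\varphi(x_s)q_s(da)\,ds$.
\item[(b)] The second term is $\mathbb E^{S^j}[H\Phi^\mu]-\mathbb E^S[H\Phi^\mu]$.
\end{itemize}

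For (a), Lemma~\ref{lemma: time marginal continuity} gives $\sup_sW_p(\mu^j_s,\mu_s)\to0$. Hence the set $\{\mu^j_s,\mu_s\}$ is compact in $\mathcal P^p(\mathbb R^d)$. Since $\varphi$ has compact support $K$, the integrand only involves $x\in K$. On $K\times\{\mu^j_s,\mu_s\}\times(A\cap\bar B(0,L))$, continuity of $b$ and $\sigma\sigma^\top$ in $(x,\mu,a)$ gives uniform convergence as $j\to\infty$, for fixed $s$. Dominated convergence in $s$ then handles the truncated part. The part with $|a|>L$ is bounded using the growth in (A2) by
$$
C\,\mathbb E^{S^j}\Bigl[\int_0^T(1+|\Lambda_s|+|\Lambda_s|^{p_\sigma})\mathbf 1_{|a|>L}\Bigr]\le CL^{-(p'-p)}
$$
uniformly in $j$, since $p_\sigma\le p<p'$. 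A minor subtlety is that $b$ is only measurable in $t$. I would handle it by applying the uniform convergence pointwise in $s$ and using dominated convergence over $[r,t]$, with the $p$-moments of $\mu^j$ uniformly bounded as the dominating bound.

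For (b), the functional $(q,x)\mapsto H\Phi^\mu(q,x)$ is continuous on $\Omega[A]$ for fixed $\mu$. This follows from continuity of $(x,a)\mapsto\mathcal L^{\mu,a}_s\varphi(x)$ and the argument of \cite[Lemma~5.2]{lacker2015mean}, which handles the relaxed-control integral through the stable topology with measurable $s$-dependence. The functional also has growth of order $1+|a|^{1\vee p_\sigma}\le C(1+|a|^p)$ in the control. Since $S^j\to S$ in $\mathcal P^p(\Omega[A])$, test functions continuous with $p$-growth converge, so (b) tends to $0$.

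Step 2 is the main obstacle: it requires continuity despite mere measurability in $t$, together with control of the unbounded $a$-growth. I would rely on the uniform $p'$-bound, which gives uniform integrability, and on the cited Lacker lemma.

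\emph{Step 3: conclusion.} Steps 1 and 2 give $\mathbb E^S[H(M^{\mu,\varphi}_t-M^{\mu,\varphi}_r)]=0$ for all bounded continuous $\mathcal F_r$-measurable $H$. A monotone-class argument extends this identity to all bounded $\mathcal F_r$-measurable $H$. Hence $M^{\mu,\varphi}$ is an $S$-martingale for every $\varphi$, and $S\in\mathcal R(\mu)$.
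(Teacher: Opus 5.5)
Your proposal is correct and follows essentially the same route as the paper. The initial law and the control moment pass to the limit by continuity and lower semicontinuity. The martingale identity passes to the limit via Lemma~\ref{lemma: time marginal continuity}, continuity of the coefficients and uniform integrability from the $p'$-bounds, and a monotone-class argument concludes. Your split into the environment-change term (a) and the fixed-environment term (b) spells out the argument the paper delegates to Proposition~\ref{proposition: stability truncated app} and \cite[Lemma~5.2]{lacker2015mean}.
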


\begin{proof}
The initial law and moment conditions pass to the limit exactly as in Proposition~\ref{proposition: stability truncated app}. For the martingale
condition, fix $0\le r<t\le T$, $\varphi\in C_c^\infty(\mathbb R^d)$ and a
bounded continuous $\mathcal F_r$-measurable function $H$. Since
$S^j\in\mathcal R(\mu^j)$,
$$
0=\mathbb E^{S^j}\left[H\bigl(M_t^{\mu^j,\varphi}-M_r^{\mu^j,\varphi}\bigr)\right].
$$
By Lemma~\ref{lemma: time marginal continuity}, $\sup_{s\in[0,T]}W_p(\mu_s^j,\mu_s)\rightarrow 0$.
The same continuity and uniform-integrability argument used in
Proposition~\ref{proposition: stability truncated app} for the untruncated
martingale functional therefore yields
$$
\mathbb E^{S^j}\left[H\bigl(M_t^{\mu^j,\varphi}-M_r^{\mu^j,\varphi}\bigr)\right]
\rightarrow\mathbb E^{S}\left[H\bigl(M_t^{\mu,\varphi}-M_r^{\mu,\varphi}\bigr)\right].
$$
Hence
$$
\mathbb E^{S}\left[H\bigl(M_t^{\mu,\varphi}-M_r^{\mu,\varphi}\bigr)\right]=0.
$$
A monotone-class argument extends the identity to every bounded
$\mathcal F_r$-measurable test function. Hence $M^{\mu,\varphi}$ is an
$S$-martingale for every $\varphi$, and therefore $S\in\mathcal R(\mu)$.  
\end{proof}

\section{Approximation of admissible deviations}
\label{appendix: deviation approximation}
This appendix proves the approximation results for admissible deviations used in Proposition~\ref{proposition: fixed perturbation limit} and in the proof of Theorem~\ref{thm: general thp}. The first approximates admissible deviations for the original problem by admissible deviations for the truncated problems. The second approximates admissible deviations in varying environments after the original coefficients and control space have been restored.

\begin{proposition}[Approximation of admissible deviations for the truncated problems]
\label{proposition: truncated deviation approximation app}
Fix $j\geq1$, and let $n_k\to\infty$ be such that $\nu^{j,n_k}\rightarrow\nu^j$
in $\mathcal{P}^{p}(\mathcal{C}^d)$ and $\sup_k\|\nu^{j,n_k}\|_{T,p'}^{p'}<\infty.$ 

If $S\in\mathcal{R}(\nu^j)$ satisfies $J(\nu^j,S)>-\infty,$ then there exist $S^k\in\mathcal{R}_{n_k}(\nu^{j,n_k})$
such that $S^k\rightarrow S$
in $\mathcal{P}^{p}(\Omega[A])$ and
$J(\nu^{j,n_k},S^k) \rightarrow J(\nu^j,S)$.
\end{proposition}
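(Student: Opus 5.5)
The plan has three steps: represent the deviation $S$ by a weak solution driven by martingale measures, truncate its control with the map $\pi_{n_k}$, and re-solve the truncated state equation on the same stochastic basis in the environment $\nu^{j,n_k}$. We then show that the re-solved laws converge to $S$ and that the rewards converge.

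\emph{Step 1: representation and truncation.} By Proposition~\ref{proposition: martingale measure representation}, $S$ is the law of $(\Lambda,X)$ on a filtered space carrying orthogonal martingale measures $N$ with intensity $\Lambda_t(da)\,dt$. Here
$$
dX_t=\int_A b(t,X_t,\nu^j_t,a)\Lambda_t(da)\,dt+\int_A\sigma(t,X_t,\nu^j_t,a)N(da,dt).
$$
Since $J(\nu^j,S)>-\infty$ and $\|\nu^j\|_{T,p'}<\infty$ (lower semicontinuity of the $p'$-moment under $W_p$-convergence), Lemma~\ref{lemma: finite value higher moments} gives $\mathbb E[\int_0^T|\Lambda_t|^{p'}dt+\|X\|_T^{p'}]<\infty$. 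Write $n=n_k$ and set $\Lambda^k_t:=\Lambda_t\circ\pi_n^{-1}$, which is $\mathcal P(A_n)$-valued and predictable. Let $N^k$ be the image martingale measure $N^k(B\times[0,t]):=N(\pi_n^{-1}(B)\times[0,t])$; its intensity is $\Lambda^k_t(da)\,dt$. On the same basis, using the Lipschitz property in $x$, solve strongly
$$
dX^k_t=\int_A b_n(t,X^k_t,\nu^{j,n}_t,a)\Lambda^k_t(da)\,dt+\int_A\sigma_n(t,X^k_t,\nu^{j,n}_t,a)N^k(da,dt),\qquad X^k_0=X_0.
$$
Set $S^k:=\operatorname{Law}(\Lambda^k,X^k)$. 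The converse direction of Proposition~\ref{proposition: martingale measure representation} gives $S^k\in\mathcal R_{n}(\nu^{j,n})$.

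\emph{Step 2: convergence $S^k\to S$.} Couple $\Lambda^k$ and $\Lambda$ through $a\mapsto(a,\pi_n(a))$. Exactly as in Proposition~\ref{proposition: eta-n approximation appendix}, dominated convergence yields $\mathbb E[d_{\mathcal V}^p(\Lambda^k,\Lambda)]\le T^{-1}\mathbb E\int_0^T\int_A|\pi_n(a)-a|^p\Lambda_t(da)dt\to0$. For the states, rewrite both integrals with respect to $\Lambda_t$ and $N$, composing the truncated coefficients with $\pi_n$. Apply BDG and Gronwall to $\mathbb E\|X^k-X\|_T^p$, using the Lipschitz bound in $x$. The remainder terms have the form
$$
\mathbb E\int_0^T\int_A\big|b_n(t,X_t,\nu^{j,n}_t,\pi_n(a))-b(t,X_t,\nu^j_t,a)\big|^p\Lambda_t(da)dt,
$$
together with the analogous $\sigma$ term.

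\emph{Main obstacle.} The hardest step is showing these remainders vanish. The integrand tends to zero pointwise: for fixed $a$ and large $n$ we have $\pi_n(a)=a$, the projection onto the ball of radius $n$ is eventually inactive, and $\nu^{j,n}_t\to\nu^j_t$ uniformly in $t$ by Lemma~\ref{lemma: time marginal continuity} combined with continuity (A1). Domination comes from the growth bound (A2), $|b|\le c_1(1+|x|+m_p+|a|)$, together with the uniform $p'$-moments of $\nu^{j,n}$, $X$ and $\Lambda$. For $\sigma$, the BDG bound involves $|\sigma\sigma^\top|$ raised to the power $p/2$, which has growth $|\cdot|^{p_\sigma p/2}$ with $p_\sigma p/2\le p<p'$, so uniform integrability holds. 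If $p<2$, the BDG bound should be applied to the $p$-th moment directly, or the estimate run at order $2$ on a localization, following \cite[(5.8)-(5.10)]{lacker2015mean}. This gives $W_p(S^k,S)\to0$, and also uniform $p'$-moment bounds for $X^k$ and $\Lambda^k$ (use $|\pi_n(a)|\le|a|$).

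\emph{Step 3: convergence of rewards.} Write $J(\nu^{j,n},S^k)=\mathbb E[\Gamma^{\nu^{j,n}}(\Lambda^k,X^k)]$ on the common probability space. In integrated form,
$$
J(\nu^{j,n},S^k)=\mathbb E\Big[\int_0^T\int_A f(t,X^k_t,\nu^{j,n}_t,\pi_n(a))\Lambda_t(da)dt+g(X^k_T,\nu^{j,n}_T)\Big].
$$
Along a subsequence, $X^k\to X$ uniformly a.s., so the integrand converges pointwise by (A1). The two-sided bound (A3) gives domination by $C(1+\|X^k\|_T^p+m_p^p+|a|^{p'})$. Uniform integrability follows from the $p'$-moments of $X^k$ (since $p'>p$) and from $|\pi_n(a)|^{p'}\le|a|^{p'}$, which is integrable. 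Vitali's theorem then yields $J(\nu^{j,n_k},S^k)\to J(\nu^j,S)$ along every subsequence, hence for the full sequence.
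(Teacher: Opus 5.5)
Your proposal is correct and follows essentially the same route as the paper. You realize $S$ through martingale measures, push both $\Lambda$ and $N$ forward under $\pi_{n_k}$, re-solve the truncated equation in the environment $\nu^{j,n_k}$ on the same basis, obtain $S^k\to S$ from a BDG--Gronwall estimate whose remainders vanish by pointwise convergence plus domination, and conclude convergence of rewards by Vitali's theorem using the pathwise bound $|\pi_{n_k}(a)|\le|a|$. One small point to tighten: the uniform $p'$-moment bounds on $X^k$ come from Lemma~\ref{lemma: state estimate} at order $p'$, whose constants are uniform in the truncation level, not from the stability estimate itself.
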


\begin{proof}
By lower semicontinuity, the assumption
$\sup_k \|\nu^{j,n_k}\|_{T,p'}<\infty$ implies that
$\|\nu^j\|_{T,p'}<\infty$. Since
$S\in\mathcal R(\nu^j)$ and $J(\nu^j,S)>-\infty$,
Lemma~\ref{lemma: finite value higher moments} yields
$$
\mathbb E^S\left[\|X\|_T^{p'}+\int_0^T|\Lambda_t|^{p'}dt\right]<\infty.
$$
Use Proposition~\ref{proposition: martingale measure representation} to realize $S$ on a filtered probability space $(\Omega',\mathcal F',(\mathcal F'_t)_{t\in[0,T]},P')$ carrying a predictable relaxed control $\Lambda$, a continuous adapted process $X$, and orthogonal martingale measures $N$ with intensity
$\Lambda_t(da)dt$, such that $S=P'\circ(\Lambda,X)^{-1}$.
In particular,
$$
\mathbb E^{P'}\left[\|X\|_T^{p'}+\int_0^T|\Lambda_t|^{p'}dt\right]<\infty.
$$

Define $\Lambda^k:=\Pi_{n_k}\Lambda$, so that $\Lambda_t^k=\Lambda_t\circ\pi_{n_k}^{-1}.$
For each $i=1,\ldots,m$, define the push-forward martingale measure
$N^{k,i}$ on $A_{n_k}\times[0,T]$ by
$$
N^{k,i}(B\times(s,t]):=N^i(\pi_{n_k}^{-1}(B)\times(s,t]),\qquad B\in\mathcal B(A_{n_k}).
$$
Then $N^k=(N^{k,1},\ldots,N^{k,m})$ is an orthogonal martingale
measure with intensity $\Lambda_t^k(da)\,dt$, since
$$
\langle N^{k,i}(B)\rangle_t = \int_0^t\Lambda_s(\pi_{n_k}^{-1}(B))\,ds = \int_0^t\Lambda_s^k(B)\,ds.
$$
Let $X^k$ solve
$$
dX_t^k =
\int_{A_{n_k}} b_{n_k}(t,X_t^k,\nu_t^{j,n_k},a)\Lambda_t^k(da)\,dt + 
\int_{A_{n_k}} \sigma_{n_k}(t,X_t^k,\nu_t^{j,n_k},a)N^k(da,dt),
\qquad X_0^k=X_0,
$$
and set $S^k:=P'\circ(\Lambda^k,X^k)^{-1}.$
Since $|\pi_{n_k}(a)|\le |a|$, the required control moment is finite,
and $X_0^k$ has law $\lambda$. Hence Proposition~\ref{proposition: martingale measure representation}, applied to the
truncated data, yields $S^k\in\mathcal R_{n_k}(\nu^{j,n_k}).$ 

We next prove that $S^k\to S$ in $\mathcal P^p(\Omega[A])$. The natural coupling of $\Lambda^k$ and $\Lambda$ gives
$$
\mathbb E^{P'}\left[d_{\mathcal V[A]}^p(\Lambda^k,\Lambda)\right] \le
\frac{1}{T}\,\mathbb E^{P'}\left[\int_0^T\int_A|\pi_{n_k}(a)-a|^p\Lambda_t(da)dt\right].
$$
Since $\pi_{n_k}(a)\to a$ and
$|\pi_{n_k}(a)|\le |a|$, the right-hand side converges to zero by dominated
convergence.

To treat the state processes, insert and subtract the coefficients evaluated
at $(X_t,\nu_t^j,a)$. Using the Lipschitz property in the state variable,
the Burkholder--Davis--Gundy inequality, and Young's inequality when $p<2$,
as in the stability estimate of \cite[Lemma~5.3]{lacker2015mean}, we obtain
$$
\mathbb E^{P'} \left [\|X^k-X\|_T^p \right ] \le C\int_0^T\mathbb E^{P'} \left [\|X^k-X\|_t^p \right ]dt + C\bigl(I_k^b+I_k^\sigma\bigr),
$$
where
$$
I_k^b := \mathbb E^{P'} \left [ \int_0^T\int_A \left|b_{n_k}(t,X_t,\nu_t^{j,n_k},\pi_{n_k}(a)) -
b(t,X_t,\nu_t^j,a)\right|^p \Lambda_t(da)dt \right ]
$$
and
$$
I_k^\sigma :=
\mathbb E^{P'}\left[\left(\int_0^T\int_A\left|\sigma_{n_k}(t,X_t,\nu_t^{j,n_k},\pi_{n_k}(a)) - \sigma(t,X_t,\nu_t^j,a)\right|^2\Lambda_t(da)dt\right)^{p/2}\right].
$$
By Lemma~\ref{lemma: time marginal continuity},
$\sup_{t\in[0,T]} W_p(\nu_t^{j,n_k},\nu_t^j) \rightarrow0$.
Together with $\pi_{n_k}(a)\to a$, continuity of the coefficients in
$(x,\mu,a)$, and convergence of the coefficient truncations, this gives
pointwise convergence of the integrands appearing in $I_k^b$ and
$I_k^\sigma$. The growth bounds in Assumption~\ref{ass_standing} and the $p'$-moment estimates,
with $p'>p\ge 1\vee p_\sigma$, provide the uniform integrability required in
the same argument as \cite[Lemma~5.3]{lacker2015mean}. Hence $I_k^b+I_k^\sigma\rightarrow0$.
Gronwall's lemma therefore yields
$$
\mathbb E^{P'} \left [\|X^k-X\|_T^p \right ]\rightarrow0.
$$
Combining this with the convergence of the projected controls gives $S^k\rightarrow S$
in $\mathcal P^p(\Omega[A])$.

It remains to prove convergence of the rewards. Since
$|\pi_{n_k}(a)|\le |a|$, the state estimate at order $p'$ and the preceding
moment bound give
$$
\sup_k \mathbb E^{P'}\left[\|X^k\|_T^{p'} + \int_0^T|\Lambda_t^k|^{p'}dt\right]<\infty.
$$
In addition, the projected control costs are dominated pathwise by the fixed integrable random variable associated with the original control:
$$
\int_0^T |\Lambda^k_t|^{p'} dt = \int_0^T \int_A |\pi_{n_k}(a)|^{p'} \Lambda_t(da)dt \leq \int_0^T |\Lambda_t|^{p'}dt.
$$
Continuity of $f$ and $g$, together with the convergence of the states,
controls, and environments, implies convergence in probability of the
corresponding reward variables. The state-dependent terms of order $p$ are
uniformly integrable because $p'>p$ and the states have uniformly bounded $p'$-moments; the deterministic environment terms are controlled by the uniform $p'$-moment bound on $(\nu^{j,n_k})_k$; and the only $p'$-order control term is dominated by the preceding integrable random variable. Hence both positive and negative parts of
the reward variables are uniformly integrable. Vitali's theorem therefore gives
$J(\nu^{j,n_k},S^k) \rightarrow J(\nu^j,S)$.
This completes the proof.
\end{proof}

\begin{proposition}[Approximation of admissible deviations in varying environments]
\label{proposition: deviation approximation varying environments app}
Let $\mu^j\rightarrow\mu$
in $\mathcal{P}^{p}(\mathcal{C}^d)$ and assume
$$
\sup_j \int_{\mathcal{C}^d} \|x\|_T^{p'}\,\mu^j(dx) <\infty.
$$
If $S\in\mathcal{R}(\mu)$ satisfies $J(\mu,S)>-\infty,$
then there exist $S^j\in\mathcal{R}(\mu^j)$ such that $S^j\rightarrow S$
in $\mathcal{P}^{p}(\Omega[A])$ and $J(\mu^j,S^j)\rightarrow J(\mu,S)$.
\end{proposition}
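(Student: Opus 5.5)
The plan is to follow the construction in the proof of Proposition~\ref{proposition: truncated deviation approximation app}, without any truncation. The control is kept fixed and only the environment in the state equation is changed. Since $\mu^j\to\mu$ in $\mathcal P^p(\mathcal C^d)$ and the $p'$-moments of $\mu^j$ are uniformly bounded, lower semicontinuity gives $\|\mu\|_{T,p'}<\infty$. As $J(\mu,S)>-\infty$, Lemma~\ref{lemma: finite value higher moments} then yields
$$
\mathbb E^S\left[\|X\|_T^{p'}+\int_0^T|\Lambda_t|^{p'}dt\right]<\infty .
$$
By Proposition~\ref{proposition: martingale measure representation}, we realize $S$ on a filtered space $(\Omega',\mathcal F',(\mathcal F'_t),P')$ carrying $(\Lambda,X,N)$. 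On the same space, and with the same $\Lambda$, $N$ and $X_0$, we let $X^j$ be the solution of
$$
dX^j_t=\int_A b(t,X^j_t,\mu^j_t,a)\Lambda_t(da)\,dt+\int_A\sigma(t,X^j_t,\mu^j_t,a)N(da,dt),\qquad X^j_0=X_0 .
$$
Existence and pathwise uniqueness of $X^j$ follow from the Lipschitz property in (A2). We then set $S^j:=P'\circ(\Lambda,X^j)^{-1}$. The converse direction of Proposition~\ref{proposition: martingale measure representation} gives $S^j\in\mathcal R(\mu^j)$, because the initial law and the control moment are unchanged.

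Convergence $S^j\to S$ reduces to $\mathbb E^{P'}[\|X^j-X\|_T^p]\to0$, since the control coordinates coincide and the coupling $(\Lambda,X^j)$ versus $(\Lambda,X)$ is admissible. We insert and subtract the coefficients evaluated at $(X_t,\mu^j_t,a)$. The Lipschitz bound, BDG, and Young's inequality when $p<2$ (as in \cite[Lemma~5.3]{lacker2015mean}) then give a Gronwall inequality with forcing terms
$$
I_j^b=\mathbb E^{P'}\!\left[\int_0^T\!\!\int_A|b(t,X_t,\mu^j_t,a)-b(t,X_t,\mu_t,a)|^p\Lambda_t(da)dt\right]
$$
and an analogous $I_j^\sigma$ with exponent $p/2$ outside the time integral. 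By Lemma~\ref{lemma: time marginal continuity} and (A1), the integrands converge pointwise. They are dominated by $C(1+|X_t|^p+\sup_j m_p^p(\mu^j_t)+|a|^p)$ for $b$, and by the $p_\sigma$-growth of $\sigma\sigma^\top$ for the diffusion term. These bounds are uniform in $j$ and integrable because $p\ge p_\sigma$. Dominated convergence therefore gives $I^b_j+I^\sigma_j\to0$, and Gronwall gives $\|X^j-X\|_T\to0$ in $L^p(P')$.

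For the rewards, Lemma~\ref{lemma: state estimate} at order $p'$ yields $\sup_j\mathbb E^{P'}[\|X^j\|_T^{p'}]<\infty$. Passing to a subsequence with a.s.\ convergence (enough, since every subsequence has a further subsequence), continuity of $f$ and $g$ together with $W_p$-convergence of $\mu^j_t$ give pointwise convergence of $\Gamma^{\mu^j}(\Lambda,X^j)$ to $\Gamma^\mu(\Lambda,X)$. To justify the $dt\,\Lambda_t(da)$-integral we use dominated convergence inside, bounding $|f|$ by $c_2(1+|x|^p+m_p^p+|a|^{p'})$. Uniform integrability of the rewards comes from three facts. The $|X^j|^p$ terms are uniformly integrable because $p'>p$. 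The environment terms are bounded deterministically. The $|a|^{p'}$ term is the same fixed integrable variable $\int_0^T|\Lambda_t|^{p'}dt$ for every $j$. Vitali's theorem then gives $J(\mu^j,S^j)\to J(\mu,S)$.

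I expect the main obstacle to be the pointwise and uniform-integrability control of the diffusion term $I^\sigma_j$. The growth is stated only for $\sigma\sigma^\top$, so $|\sigma|^2\le C(1+|x|^{p_\sigma}+m_p^{p_\sigma}+|a|^{p_\sigma})$, and the exponent $p/2$ must be handled carefully for both $p\ge2$ and $p<2$. I would bound $|\sigma(\cdot,\mu^j)-\sigma(\cdot,\mu)|^2\le 2|\sigma(\cdot,\mu^j)|^2+2|\sigma(\cdot,\mu)|^2$, apply Jensen or Hölder to move the power, and use $p_\sigma p/2\le p\le p'$ to obtain domination.
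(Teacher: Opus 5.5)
Your proposal is correct and follows the paper's proof essentially step by step. It uses the same realization via the martingale-measure representation with $\Lambda$, $N$, $X_0$ frozen, the same BDG--Gronwall estimate with forcing terms comparing the coefficients at $\mu^j_t$ and $\mu_t$, and the same uniform-integrability/Vitali argument for the rewards. Your explicit domination of the forcing terms along the fixed state $X$ (including the $p_\sigma p/2\le p$ bound for the diffusion term) and your subsequence argument for almost sure convergence fill in details that the paper leaves implicit.
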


\begin{proof}
By lower semicontinuity and the uniform $p'$-moment bound on $(\mu^j)_j$,
we have $\|\mu\|_{T,p'}<\infty$. Hence
Lemma~\ref{lemma: finite value higher moments} yields
$$
\mathbb E^S\left[\|X\|_T^{p'}+ \int_0^T|\Lambda_t|^{p'}dt\right]<\infty.
$$
Realize $S$ on a filtered probability space
$(\Omega',\mathcal F',(\mathcal F'_t)_{t\in[0,T]},P')$
by $(\Lambda,X,N)$ as in
Proposition~\ref{proposition: martingale measure representation}, so that
$S=P'\circ(\Lambda,X)^{-1}$.
For each $j$, keep $\Lambda$ and $N$ fixed and solve
$$
dX_t^j=\int_A b(t,X_t^j,\mu_t^j,a)\Lambda_t(da)dt +\int_A\sigma(t,X_t^j,\mu_t^j,a)N(da,dt),
\qquad X_0^j=X_0.
$$
Then $S^j:=P' \circ (\Lambda,X^j)^{-1}\in\mathcal R(\mu^j)$.

The same BDG-Gronwall estimate used in the truncated approximation, now
without projection or truncation errors, gives
$$
\mathbb E^{P'} \left [\|X^j-X\|_T^p \right ]
\le C\int_0^T\mathbb E^{P'}\left [ \|X^j-X\|_t^p \right ]dt+C(B_j+\Sigma_j),
$$
where $B_j$ and $\Sigma_j$ contain only the differences between the
coefficients evaluated at $\mu_t^j$ and at $\mu_t$. By
Lemma~\ref{lemma: time marginal continuity},
$\sup_{t\in[0,T]}W_p(\mu_t^j,\mu_t)\rightarrow0$.
Continuity of the coefficients in the measure variable, together with the
growth bounds and the preceding $p'$-moment estimate, gives the required
uniform integrability, exactly as in the proof of the previous proposition.
Hence $B_j+\Sigma_j\to0$, and Gronwall's lemma yields $\mathbb E^{P'}[\|X^j-X\|_T^p]\rightarrow0$.
Since the control coordinate is unchanged, it follows that
$S^j\to S$ in $\mathcal P^p(\Omega[A])$.

Finally, the state estimate at order $p'$, the uniform $p'$-moment bound on
$(\mu^j)_j$, and the fixed $p'$-moment of $\Lambda$ give
$\sup_j\mathbb E^{P'} \left [\|X^j\|_T^{p'} \right ]<\infty$.
Here the control coordinate is unchanged, so the $p'$-order control-cost term is the same integrable random variable $\int_0^T |\Lambda_t|^{p'}dt$ for every $j$. Together with $p'>p$, the uniform $p'$-moment bounds for the states and environments therefore imply uniform integrability of both the positive and negative parts of the reward variables. Continuity of $f$ and $g$ then yields $J(\mu^j,S^j) \to J(\mu,S)$.
\end{proof}
\end{appendix}

\bibliographystyle{siam}
\bibliography{reference}
\end{document}